\newtheorem{theorem}{Theorem}[section]
\newtheorem{lemma}[theorem]{Lemma}
\newtheorem{corollary}[theorem]{Corollary}
\newtheorem{proposition}[theorem]{Proposition}
\newtheorem*{coro}{Corollary}
\newtheorem*{prop}{Proposition}
	\newenvironment{customthm}[1]
  {\innercustomthm}
  {\endinnercustomthm}
\theoremstyle{definition}
\newtheorem{definition}[theorem]{Definition}
\newtheorem{example}[theorem]{Example}
\newtheorem{question}[theorem]{Question}
\theoremstyle{remark}
\newtheorem{remark}[theorem]{Remark}
\numberwithin{equation}{section}
\newcommand{\mc}{\mathsmaller{\mathsmaller{\geq0}}}
\newcommand{\mec}{\mathsmaller{\mathsmaller{>0}}}
\newcommand{\rank}[1]{\textnormal{rk}\:#1}
\newcommand{\set}[1]{\left\{#1\right\}}
\newcommand{\cen}{\textnormal{c}}
\newcommand{\spe}{\textnormal{sp}}
\newcommand{\rec}{\textnormal{r}}
\newcommand{\sop}[1]{\mathlarger{\mathlarger{\mathfrak s}}_#1}
\newcommand{\C}[1]{\text{Core}\left(#1\right)}
\newcommand{\RZ}[1]{\textnormal{RZ}#1}
\newcommand{\NL}[1]{\textnormal{NL}#1}
\newcommand{\Val}{\mathcal{V}}
\newcommand{\RZtilde}{\widetilde{\textnormal{RZ}}}
\newcommand{\esq}{\Sigma_{X'}}
\title{Topology of spaces of valuations and geometry of singularities}
\author{Ana Bel\'en de Felipe}
\address{A.B. de Felipe: BCAM -- Basque Center for Applied Mathematics. 
Alameda de Mazarredo, 14. E-48009 Bilbao, Basque Country -- Spain.}
\email{adefelipe@bcamath.org}
\date{}
\thanks{This research was supported by ERCEA Consolidator Grant 615655 -- NMST; 
the Basque Government through the BERC 2014--2017 program; the Spanish Ministry of Economy 
and Competitiveness MINECO: BCAM Severo Ochoa excellence accreditation SEV--2013--0323 and 
MTM2016-80659-P; and the ACIISI (with a co-financing rate of 85\% from ESF)
}
\keywords{Riemann--Zariski space, normalized non-Archimedean link, valuative tree, dual graph}
\subjclass[2010]{14B05,14E15}
\begin{document}
 
\begin{abstract}
Given an algebraic variety $X$ defined over an algebraically closed field, we study the space 
$\textnormal{RZ}{(X,x)}$ consisting of all the valuations of the function field of $X$ which 
are centered in a closed point $x$ of $X$. We concentrate on its homeomorphism type. We prove 
that, when $x$ is a regular point, this homeomorphism type only depends on the dimension of $X$. 
If $x$ is a singular point of a normal surface, we show that it only depends on the dual graph 
of a good resolution of $(X,x)$ up to some precise equivalence. This is done by studying the relation 
between $\textnormal{RZ}{(X,x)}$ and the norma\-lized non-Archimedean link of $x$ in $X$ coming 
from the point of view of Berkovich geometry. We prove that their behavior is the same.
\end{abstract}

\maketitle 


\section{Introduction}
\label{intro}

In this work we suppose that $X$ is an algebraic variety defined over an algebraically closed 
field $k$ and we fix a closed point $x$ in $X$. We initiate the study of the homeomorphism type 
of the space $\RZ{(X,x)}$ consisting of all valuation rings of the function field of $X$ dominating 
the local ring $\mathcal{O}_{X,x}$, endowed with the topology induced by the Zariski topology. 
We call $\RZ(X,x)$ the Riemann--Zariski space of $X$ at $x$. 

 Our goal is to clarify the relation between the topological properties of this space 
and the local geometry of $X$ at $x$. Note that the one-dimensional case is well understood: if 
$X$ is an algebraic curve then $\RZ(X,x)$ is in bijection with the set of the local analytic 
branches of $X$ at $x$. However, the situation is richer in higher dimension.

 As we shall explain in this introduction, similar considerations have appeared in the context 
of the theory of analytic spaces as developed by Berkovich and others after \cite{Ber}. Adopting 
this point of view one associates to $X$ its analytification $X^\text{an}$. A point of $X^\text{an}$ 
is an absolute value on the residue field of a point of $X$, extending the trivial absolute value of 
$k$. We may consider the subspace $\hbox{L}(X,x)$ of all points in $X^\text{an}$ which specialize 
to $x$ excepting the trivial one and then identify points defining equivalent valuations. We 
obtain in this way the normalized non-Archimedean link $\NL{(X,x)}$ of $x$ in $X$ (see \cite{FanC}).

 We shall refer to these spaces as spaces of valuations. Note that they have different 
topological properties. For instance, $\NL{(X,x)}$ is a compact space whereas $\RZ{(X,x)}$ 
is not Hausdorff in general. We clarify the relation between them: 

\begin{prop}[Propositions~\ref{pi} and \ref{quotientHausdorff}]
There exists a canonical continuous surjective map from $\RZ{(X,x)}$ to $\NL{(X,x)}$. 
Moreover, when $(X,x)$ is a normal surface singularity, $\NL{(X,x)}$ is the largest Hausdorff 
quotient of $\RZ{(X,x)}$
\end{prop}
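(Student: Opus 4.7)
The plan is to construct the map $\pi$ in general (Proposition~\ref{pi}), and then analyse its fibres under the surface hypothesis (Proposition~\ref{quotientHausdorff}).

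\emph{Construction of $\pi$.} A valuation $\nu \in \RZ{(X,x)}$ with value group $\Gamma_\nu$ restricts to a semivaluation on $\mathcal{O}_{X,x}$ with values in $\Gamma_\nu \cup \{\infty\}$. To extract a real-valued, rank-one representative centered at $\mathfrak{m}_x$, I would first localize $R_\nu$ at the smallest prime $\mathfrak{p}$ of $R_\nu$ contracting to $\mathfrak{m}_x$; the localization $R_{\nu,\mathfrak{p}}$ still dominates $\mathcal{O}_{X,x}$, and its value group $\Gamma_\nu/\Delta_\mathfrak{p}$ is by construction the largest quotient of $\Gamma_\nu$ on which $\nu(f)$ is strictly positive for every $f \in \mathfrak{m}_x \smallsetminus \{0\}$. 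Composing with the quotient by the maximal proper convex subgroup yields, after renormalising so that $\min_{f \in \mathfrak{m}_x} \nu(f) = 1$, a well-defined class $\pi(\nu) \in \NL{(X,x)}$. Continuity I would verify on a subbasis of $\NL{(X,x)}$: open conditions of the form $|f| < r|g|$ with $f, g \in \mathcal{O}_{X,x}$ pull back under $\pi$ to Zariski-open subsets of $\RZ{(X,x)}$ cut out by comparisons of $\nu(f)$ and $\nu(g)$ in $\Gamma_\nu$.

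\emph{Surjectivity.} A point $\eta \in \NL{(X,x)}$ is a rank-one semivaluation on $\mathcal{O}_{X,x}$ with some support $\mathfrak{q}$, viewed as a rank-one valuation of $\operatorname{Frac}(\mathcal{O}_{X,x}/\mathfrak{q})$. By Chevalley's extension theorem this valuation lifts to a Krull valuation of the residue field of any auxiliary valuation of $K(X)$ whose center on $\operatorname{Spec}\mathcal{O}_{X,x}$ equals $\mathfrak{q}$. Composing the two produces a Krull valuation of $K(X)$ that belongs to $\RZ{(X,x)}$ and whose image under $\pi$ is $\eta$.

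\emph{Largest Hausdorff quotient in the surface case.} Since $\NL{(X,x)}$ is Hausdorff, $\pi$ factors through the largest Hausdorff quotient $\RZ{(X,x)}/\sim_H$, giving one inclusion for free. The substantive direction is the reverse: any two valuations with the same image under $\pi$ are already $\sim_H$-equivalent. By Abhyankar's inequality, every $\nu \in \RZ{(X,x)}$ has rank $1$ or $2$. A rank-one $\mu$ maps to itself in $\NL{(X,x)}$, while a rank-two $\tilde\nu$ maps to its archimedean quotient, so the fibre of $\pi$ over a rank-one $\mu$ is $\{\mu\}$ together with the rank-two refinements of $\mu$. Each such refinement $\tilde\nu$ satisfies the strict ring inclusion $R_{\tilde\nu} \subsetneq R_\mu$, which in the Zariski topology on $\RZ{(X,x)}$ is the specialization relation $\tilde\nu \in \overline{\{\mu\}}$.

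For any continuous map $f : \RZ{(X,x)} \to Z$ with $Z$ Hausdorff, the image $f(\tilde\nu)$ belongs to the closed set $\overline{\{f(\mu)\}} = \{f(\mu)\}$, forcing $f(\mu) = f(\tilde\nu)$. Thus $\mu$ and its rank-two refinements collapse to a single $\sim_H$-class, which is precisely the fibre of $\pi$ over $\mu$; combined with the Hausdorffness of $\NL{(X,x)}$, this identifies $\NL{(X,x)}$ with $\RZ{(X,x)}/\sim_H$. The main obstacle I anticipate is the accurate description of the fibres of $\pi$: one must verify that every rank-two valuation centered at $x$ genuinely arises as a refinement of a rank-one valuation centered at $x$ and that no identifications beyond specialization are forced in any Hausdorff quotient. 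Both points rest on the surface hypothesis and on the explicit structure of rank-two valuations over a two-dimensional normal local ring, which is where the case $\dim X = 2$ is used essentially.
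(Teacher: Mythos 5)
The construction of $\pi$ is not correct as written. Localising $R_\nu$ at the smallest prime $\mathfrak{p}$ contracting to $\mathfrak{m}_x$ does produce the coarsest coarsening $\nu_j$ of $\nu$ still centred at $x$; so far this agrees with the paper. But quotienting the value group $\Gamma_\nu/\Delta_{\mathfrak{p}}$ of $\nu_j$ by its \emph{maximal} proper convex subgroup yields the rank-one coarsening $\nu_1$ of $\nu$, and whenever $j>1$ this $\nu_1$ is \emph{not} centred at $x$: it vanishes on $\mathfrak{m}_x$, so your normalisation $\min_{f\in\mathfrak{m}_x}\nu(f)=1$ divides by zero. What actually produces a point of $\NL{(X,x)}$ is dual to what you wrote: write $\nu_j=\nu_{j-1}\circ\overline{\nu}_{j-1}$, with $\nu_{j-1}$ the next coarser valuation (not centred at $x$), and set $\pi(\nu)(f)=\overline{\nu}_{j-1}(\bar f)$ if $f\notin m_{\nu_{j-1}}$ and $\pi(\nu)(f)=+\infty$ otherwise. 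Group-theoretically one restricts $\nu_j$ to values lying in the \emph{minimal nonzero} convex subgroup and sends the complement to $\infty$; the result is a semivaluation with support $\mathcal{O}_{X,x}\cap m_{\nu_{j-1}}$, and only then does the normalisation make sense.

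Continuity is the hard part of Proposition~\ref{pi} and your sketch does not address it. A condition $\pi(\nu)(f)>\alpha$ involves the normalising factor $\nu(\mathfrak{m}_x)=\min_i\nu(g_i)$ over a generating set, and for higher-rank $\nu$ the relevant quantity lives inside a rank-one convex subgroup after a rank reduction; neither is a single Zariski-open comparison $\nu(f)\geq\nu(g)$. The paper's argument passes to normalised blow-ups of ideals $\mathfrak{m}^N+(f)$ (resp.\ $\mathfrak{m}^p+(f^q)$) so that $\mathfrak{m}$ becomes locally principal near the centre of $\nu$, sorts exceptional components into ``nice'' and ``bad'' depending on whether the associated divisorial valuations already lie in $U_{>\alpha}$, and then separates the centre from the bad components by further blow-ups, with a boundedness claim coming from rank-one convexity. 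None of this is captured by ``Zariski-open subsets cut out by comparisons of $\nu(f)$ and $\nu(g)$.''

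Finally, your analysis of the fibres in Proposition~\ref{quotientHausdorff} omits a whole family of fibres and appeals to a false lemma in the others. The fibre over a rank-one $\mu$ is, as you say, $\set{\mu}$ together with its rank-two refinements, and these collapse under any map to a Hausdorff space because each refinement lies in $\overline{\set{\mu}}$. But the fibres over semivaluations $v$ with $\sop{v}\neq 0$ consist of rank-two valuations $\nu_1\circ\overline{\nu}$ whose rank-one coarsening $\nu_1$ is \emph{not} centred at $x$; such $\nu$ are not refinements of any rank-one valuation centred at $x$, so the statement you say ``one must verify'' (every rank-two valuation centred at $x$ refines a rank-one valuation centred at $x$) is false. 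For these fibres it is injectivity of $\tilde\pi$ that is at stake, and that is precisely where normality enters: $\sop{v}$ is a height-one prime of $\mathcal{O}_{X,x}$, so $(\mathcal{O}_{X,x})_{\sop{v}}$ is a one-dimensional normal local domain, hence a discrete valuation ring, forcing $\nu_1$ to be the corresponding divisorial valuation for every preimage of $v$; the equality $\pi(\nu)=\pi(\nu')$ then gives $\overline{\nu}=\overline{\nu'}$ on $k_{\nu_1}=\textnormal{Frac}(\mathcal{O}_{X,x}/\sop{v})$. So these fibres are singletons, and the specialisation argument is needed only over the rank-one valuations.
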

 
 We do this by first showing that $\NL{(X,x)}$ can be viewed as the set of normalized 
semivaluations of $\mathcal{O}_{X,x}$ equipped with the topology of pointwise convergence. 
By a semivaluation on a ring $A$ we mean here a map from $A$ to $[0,+\infty]$ verifying the 
standard axioms of valuations but which may send to infinity some non-zero elements of $A$. 
We also give a sketch of a proof that the last statement in the previous proposition 
may fail in higher dimension (see Example~\ref{ejdimtres}).

 We address first the regular case. Our main result in this case is the following:

\begin{customthm}{A}[Theorems~\ref{regNL} and \ref{firstlemma}]\label{thmregular}
Let $x\in X$, $y\in Y$ be regular closed points of two algebraic varieties defined over 
the same algebraically closed field $k$. The following statements are equivalent:
\begin{enumerate}[(1)]
\item The spaces $\RZ{(X,x)}$ and $\RZ{(Y,y)}$ are homeomorphic.
\item The spaces $\mathrm{NL}(X,x)$ and $\mathrm{NL}(Y,y)$ are homeomorphic.
\item The varieties $X$ and $Y$ have the same dimension.
\end{enumerate}
\end{customthm}

 Our approach is as follows. We recover the dimension of $X$ from the Krull dimension of 
$\RZ{(X,x)}$ and the covering dimension of $\NL{(X,x)}$. Then in the case of $\NL{(X,x)}$ 
we show the uniqueness of the extension of a semivaluation of $\mathcal{O}_{X,x}$ to its 
formal completion. In the Riemann-Zariski setting the proof is more involved. We rely on 
\cite[Theorem~7.1]{HOST} that allows to extend valuations to the henselization of 
$\mathcal{O}_{X,x}$ in a canonical way.

 Two consequences of this statement are particularly noteworthy. On the one hand, 
this result shows that the homeomorphism type of $\RZ{(X,x)}$ and $\NL{(X,x)}$ depends only 
on the dimension of the variety $X$. In dimension two, one can be more specific. A topological 
model for $\hbox{NL}(\mathbf{A}_{\mathbf{C}}^2,0)$ has already been proposed in 
\cite[Section~3.2.3]{VT}. In this monograph $\NL{(\mathbf{A}_{\mathbf{C}}^2,0)}$ is referred 
to as the valuative tree. This space carries a canonical affine structure which allows 
one to perform convex analysis on it and which finds interesting applications in dynamics and 
complex analysis in \cite{FJ1,FJ}. More precisely it has a rooted nonmetric $\mathbf R$-tree 
structure (see Subsection~\ref{subsectrees}). Roughly speaking, this means that it is a 
topological space where any two different points are joined by a unique real line interval. 
This structure was extended in \cite{Gr} to the case of a regular closed point of a surface. 
The homeomorphism type of an arbitrary Berkovich curve is also studied in \cite{HLP} under 
a countability assumption on the base field. Since $\hbox{NL}(\mathbf{A}_{k}^2,0)$ is 
homeomorphic to the closure of the open unit ball in the Berkovich analytification of the 
affine line over the discrete valued field $k(\!(t)\!)$ (see \cite{VT}), the results of 
\cite{HLP} show that $\hbox{NL}(\mathbf{A}_{k}^2,0)$ is a Wa\.{z}ewski universal dendrite 
when $k$ is countable.

 On the other hand, assuming that resolution of singularities holds, Theorem~\ref{thmregular} 
reveals the self-homeomorphic structure of $\RZ{(X,x)}$ by considering a projective system of 
non-singular varieties. This property is also observed in $\hbox{NL}(\mathbf{A}_{\mathbf{C}}^2,0)$ 
and in the space of real places of $L(y)$ where $L=\mathbf{R}(\!(t^{\mathbf Q})\!)$ (see 
\cite[Theorem~6.51]{VT} and \cite[Corollary~21]{KK} respectively). More precisely, we obtain: 

\begin{coro}[Corollary~\ref{RZisautosim}]
Let $X$ be an algebraic variety defined over an algebraically closed field $k$ 
of characteristic zero. If $x\in X$ is a regular closed point, then for any open 
subset $U\subseteq\RZ{(X,x)}$ there is a subset $V\subseteq U$ such that $V$ is 
homeomorphic to $\RZ{(X,x)}$.
\end{coro}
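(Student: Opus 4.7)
The plan is to combine Theorem~\ref{thmregular} with resolution of singularities to embed a homeomorphic copy of $\RZ{(X,x)}$ inside any prescribed open set. Let $U\subseteq\RZ{(X,x)}$ be a non-empty open subset and pick a valuation $v\in U$. The Zariski topology on $\RZ{(X,x)}$ admits as a basis the sets
\[
W(X'',V''):=\{w\in\RZ{(X,x)}:\text{the center of }w\text{ on }X''\text{ lies in }V''\},
\]
where $\pi\colon X''\to X$ ranges over proper birational modifications of $X$ and $V''$ over Zariski open subsets of $X''$. So I may choose such a modification $\pi$ and an open $V''\subseteq X''$ containing the center of $v$ so that $W(X'',V'')\subseteq U$.

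Because $k$ has characteristic zero, I invoke Hironaka's theorem to dominate $X''$ by a non-singular proper birational modification. Replacing $X''$ by this resolution and $V''$ by its preimage, I may assume that $X''$ itself is non-singular. The center of $v$ on $X''$ lies in $V''\cap\pi^{-1}(x)$, which is therefore a non-empty open subset of the fibre $\pi^{-1}(x)$. As $k$ is algebraically closed, this open subset contains a $k$-rational point; pick a closed point $y''\in V''\cap\pi^{-1}(x)$. Its residue field equals $k$, so $y''$ is also a closed point of $X''$, and it is regular by the non-singularity of $X''$.

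Any valuation dominating $\mathcal{O}_{X'',y''}$ has center $y''$ on $X''$, which lies in $V''$, so $\RZ{(X'',y'')}\subseteq W(X'',V'')\subseteq U$; and since $\mathcal{O}_{X,x}\subseteq\mathcal{O}_{X'',y''}$, every element of $\RZ{(X'',y'')}$ is automatically an element of $\RZ{(X,x)}$. The subspace topology induced on $\RZ{(X'',y'')}$ from $\RZ{(X,x)}$ coincides with its intrinsic Zariski topology, since both are generated by the same basic opens $\{w:w(f)\geq 0\}$ on the common function field $K(X)=K(X'')$. Finally, $(X,x)$ and $(X'',y'')$ are regular closed points of $k$-varieties of the same dimension, so Theorem~\ref{thmregular} yields a homeomorphism $\RZ{(X'',y'')}\cong\RZ{(X,x)}$, and $V:=\RZ{(X'',y'')}$ is the required subset. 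The crux is the existence of the regular closed point $y''$ sitting inside the prescribed open $V''$; this is precisely where the characteristic-zero hypothesis enters, through Hironaka's resolution of singularities, after which the regular-case invariance supplied by Theorem~\ref{thmregular} does the rest.
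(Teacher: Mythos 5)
Your proof is correct and follows essentially the same strategy as the paper: shrink $U$ to a basic open realized as ``center lies in an open subset of a model,'' resolve by Hironaka, pick a regular closed point of the fibre over $x$ inside that open set, and conclude via Theorem~\ref{thmregular} that its Riemann--Zariski space is a homeomorphic copy of $\RZ{(X,x)}$ sitting inside $U$. The only (immaterial) difference is that you invoke Zariski's projective-limit description to produce the basic open $W(X'',V'')$, whereas the paper first reduces to $\mathbf{A}_k^d$ and explicitly blows up the ideal $(x_1,\ldots,x_d)\cdot\prod_i(f_i,g_i)$ to make the defining rational functions $f_i/g_i$ regular on a chart.
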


 Next, we consider a singular point $x$ of a normal algebraic surface $X$. The dual graph 
associated to a good resolution of $(X,x)$ becomes a fundamental tool for our purpose. Note 
that any two such graphs are equivalent in the following sense: they can be made isomorphic by 
subdividing edges and attaching trees. A nice way to describe this equivalence is by looking at 
the core of a graph in the sense of \cite{Sta}. 

By a graph we mean a finite connected graph with at least one vertex. From a graph $\Gamma$ 
we can build in a natural way a topological space $|\Gamma|$ called its topological realization. 
The core of a graph $\Gamma$ which is not a tree is the subgraph of $\Gamma$ obtained by 
repeatedly deleting a vertex of degree one and the edge incident to it, until no vertex of 
degree one remains. In other words, the core $\C{\Gamma}$ of the graph $\Gamma$ is 
the smallest subgraph of $\Gamma$ with the same homotopy type. By convention we define the 
core of a tree to be the empty set and we set $|\emptyset|:=\emptyset$. We say that two graphs 
$\Gamma$ and $\Gamma'$ are equivalent if $|\C{\Gamma}|$ and $|\C{\Gamma'}|$ are homeomorphic.

 Our main result in this case is the following: 

\begin{customthm}{B}\label{thmsuperficies}
Let $x\in X$ and $y\in Y$ be singular points of normal algebraic surfaces defined over the 
same algebraically closed field $k$, and let $\Gamma_{X'}$ and $\Gamma_{Y'}$ be the dual graphs 
associated to good resolutions $\pi_{X'}:X'\to X$ and $\pi_{Y'}:Y'\to Y$ of $(X,x)$ and 
$(Y,y)$ respectively. The following statements are equivalent:
\begin{enumerate}[(1)]
\item The spaces $\RZ{(X,x)}$ and $\RZ{(Y,y)}$ are homeomorphic.
\item The spaces $\mathrm{NL}(X,x)$ and $\mathrm{NL}(Y,y)$ are homeomorphic.
\item The graphs $\Gamma_{X'}$ and $\Gamma_{Y'}$ are equivalent.
\end{enumerate}
\end{customthm}

 Two important ingredients of the proof of this result are the structure of the valuative 
tree of \cite{VT}, and the properties of the core of $\NL{(X,x)}$, which corresponds to what 
Berkovich called the skeleton in \cite{Ber}. 

 Observe that this statement implies that the spaces of valuations $\RZ{(X,x)}$ and $\NL{(X,x)}$ 
associated to any rational surface singularity $(X,x)$ are homeomorphic to 
$\RZ{(\mathbf{A}_k^2,0)}$ and $\NL{(\mathbf{A}_k^2,0)}$ respectively. The converse is not true, 
as the example of a cone over an elliptic curve shows. See Example~\ref{explane} for details. 

 In order to obtain more precise information on the singularity $(X,x)$ it will be necessary to 
explore finer structures of $\RZ{(X,x)}$. Actually both spaces of valuations are locally ringed 
spaces. In \cite{Fan}, $\NL{(X,x)}$ is endowed with a natural analytic structure locally modeled 
on affinoid spaces over $k(\!(t)\!)$. However this structure contains too much information for 
our purposes since by \cite[Corollary~4.14]{Fan} it determines the completion of the local ring 
${\mathcal O}_{X,x}$.

Finally, Theorem~B also shows that there exist normal surface singularities such that their 
normalized non-Archimedean links are homotopy equivalent but not homeomorphic (see Remark~\ref{NLnothomeo}).

The rest of the paper is organized as follows. In Section~\ref{section1} we introduce the spaces 
of valuations we deal with in this work and discuss some of its topological features. The first 
subsection is devoted to the Riemann--Zariski space and the second one to the normalized 
non-Archimedean link. Section~\ref{section1} ends with the study of the relationship between 
these spaces. The purpose of Section~\ref{section2} is to give the proof of Theorem~\ref{thmregular}. 
Section~\ref{section3} provides a short discussion of trees and graphs, which appear as 
essential tools in the next section. Finally, Section~\ref{section4} contains the proof of 
Theorem~\ref{thmsuperficies}.

 From now on we denote by $X$ an algebraic variety defined over an algebraically closed field 
$k$, i.e.\ $X$ is an integral separated scheme of finite type over $k$. We denote by $K$ its 
function field and by $d$ its dimension. We assume that $d>0$.

\subsection*{Acknowledgements}
This work has greatly benefited of many discussions with Charles Favre and Bernard Teissier. 
I would like to thank them and also Johannes Nicaise and the referee for many comments and 
suggestions. I am also grateful to Universidad de La Laguna, Universit\'e 
de Versailles Saint-Quentin--en--Yvelines and Institut de Math\'ematiques de Jussieu--Paris 
Rive Gauche for their hospitality.


\section{Spaces of valuations}\label{section1}

\subsection{The Riemann--Zariski space of \texorpdfstring{$\boldsymbol{X}$}{X} at 
\texorpdfstring{$\boldsymbol{x}$}{x}}

 Let $\mathfrak{X}$ be the set of all valuations of $K$ extending the trivial valuation 
of $k$, equipped with the \emph{Zariski topology}. Given $\nu\in\mathfrak{X}$, we denote by 
$\rank{\nu}$ the rank of its value group, $R_\nu$ its valuation ring, $m_\nu$ the maximal ideal 
of $R_\nu$ and $k_\nu$ the residue field. This topology is obtained by taking the subsets 
\[E(A)=\set{\nu\in\mathfrak{X}\:/\:A\subset R_\nu},\] 
where $A$ ranges over the family of all finite subsets of $K$, as a basis of open sets. If 
$A=\set{f_1,\ldots,f_m}$ then we write $E(A)=E(f_1,\ldots,f_m)$. Classically, two valuations 
of $K$ which differ by an order-preserving group isomorphism are called equivalent and they are 
identified. 

\emph{The Riemann--Zariski space} $\mathfrak{X}$ of $K|k$ was first introduced by Zariski, 
who esta\-blished its quasi-compactness in \cite{Za}. This result was a key point in his approach 
to the problem of resolution of singularities (this turns out to be also a key result in 
some recent attempts to solve this problem in positive characteristic following new strategies 
also using local uniformization, see \cite{Co,Te14}). However, observe that $\mathfrak{X}$ is 
in general very far from being Hausdorff: given $\nu\in\mathfrak{X}$, its closure in 
$\mathfrak{X}$ is the set of all valuations $\nu'$ in $\mathfrak{X}$ such that 
$R_{\nu'}\subseteq R_\nu$ (see \cite[Ch.~VI, \S 17, Theorem~38]{ZS}).

 Given a valuation $\nu\in\mathfrak{X}$, by the valuative criterion of separatedness, there exists 
at most one scheme-theoretic point $\xi\in X$ such that $R_\nu$ \emph{dominates} the local ring 
$\mathcal{O}_{X,\xi}$, that is $\mathcal{O}_{X,\xi}\subseteq R_\nu$ and $m_\nu$ intersects 
$\mathcal{O}_{X,\xi}$ in its maximal ideal $\mathfrak{m}_{X,\xi}$. If such a point exists, it is 
called the \emph{center} of $\nu$ in $X$ (we may also consider the closure of $\set{\xi}$ in $X$ 
as the center) and we say that $\nu$ is \emph{centered} in $X$. We call $\text{RZ}(X)$ the set of 
all valuations $\nu\in\mathfrak{X}$ such that $\nu$ is centered in $X$, endowed with the topology 
induced by the Zariski topology of $\mathfrak{X}$. 

When $X$ is affine, saying that $\nu$ belongs to $\text{RZ}(X)$ is equivalent to saying that $R_\nu$ 
contains the ring of regular functions on $X$. In general, if $\set{U_i}_{i=1}^m$ is an open affine 
covering of $X$, then $\text{RZ}(X)$ is the (finite) union of the open sets $E(A_i)$, where $A_i$ 
is a finite subset of $\mathcal O_X(U_i)$ generating $\mathcal O_X(U_i)$ as $k$-algebra. Since any 
basic open subset of $\mathfrak{X}$ is in fact quasi-compact (this follows from \cite[Ch.~VI, \S 17, 
Theorem~40]{ZS}), we conclude that $\text{RZ}(X)$ is a quasi-compact open subset of $\mathfrak{X}$. 
In addition, as a consequence of the valuative criterion of properness, it is equal to the whole 
Riemann--Zariski space if and only if $X$ is a complete variety. Finally, let us define the 
\emph{center map} $\cen_X:\text{RZ}(X)\to X$, which sends any valuation of $\text{RZ}(X)$ to 
its center in $X$. If $X$ is affine, then the inverse image of the basic open set of $X$ defined 
by a non-zero regular function $f$ on $X$ is $E(f^{-1})\cap\text{RZ}(X)$. The center map is 
continuous.
 
 A birational morphism $\pi_{X'}:X'\rightarrow X$ induces an isomorphism between the function 
fields of $X$ and $X'$, so we can identify their Riemann--Zariski spaces. Moreover, if $\nu$ is 
an element of $\text{RZ}(X')$ then it also belongs to $\text{RZ}(X)$ and $\pi_{X'}(\cen_{X'}(\nu))=
\cen_X(\nu)$. In the case that $\pi_{X'}$ is also proper, then it follows from the valuative 
criterion of properness that $\text{RZ}(X')$ and $\text{RZ}(X)$ coincide. Consider the projective 
system consisting of all proper birational models $(X',\pi_{X'})$ over $X$, and endow its projective 
limit $\mathcal Z$ with the projective limit topology (i.e.\ the coarsest topology for which 
all the projection maps $\mathcal Z\to X'$ are continuous). A fundamental theorem of Zariski 
states that the natural map from $\textnormal{RZ}(X)$ to $\mathcal Z$, which corresponds to 
sending a valuation $\nu$ to its center $\cen_{X'}(\nu)$ on each model $Y$, is a homeomorphism.

\begin{definition}
Given a closed point $x\in X$, we denote by $\RZ{(X,x)}$ the set of all valuations 
of $\text{RZ}(X)$ whose center in $X$ is $x$, equipped with the induced topology. We call 
this space the Riemann--Zariski space of $X$ at $x$. 
\end{definition}

 Note that $\RZ{(X,x)}$ is a closed subspace of $\text{RZ}(X)$ (since it is the fiber over $x$ 
of the continuous map $\cen_X$) and it is therefore itself quasi-compact. Let us first explore 
the case of curves.

\begin{remark}
Suppose $d=1$ and fix a closed point $x\in X$. Let $\bar x_1,\ldots,\bar x_m$ be the closed points 
in the fiber over $x$ of the normalization morphism $\overline{X}\rightarrow X$ (observe that 
by \cite[7.8.3-(vii)]{EGAIV}, $m$ is the number of local analytic branches of $X$ at $x$). Then  
$\RZ{(X,x)}=\bigsqcup_{1\leq i\leq m}\RZ{(\overline{X},\bar x_i)}$. Take $i\in\set{1,\ldots,m}$. 
Since $d=1$, $\bar x_i$ is a regular point of $\overline{X}$. Then 
$\mathcal{O}_{\overline{X},\bar x_i}$ is a regular local ring of dimension one with fraction 
field $K$ and therefore a valuation ring of $K$. The space $\RZ{(\overline{X},\bar x_i)}$ is 
thus reduced to a unique point. From this we deduce that $\RZ{(X,x)}$ is a finite set. 
\end{remark}

 Recall that the \emph{Krull dimension} of a topological space $Z$ is the supremum 
in the extended real line of the lengths of all chains of irreducible closed subspaces of $Z$. 
A chain 
$$\emptyset\subsetneq Z_0\subsetneq\ldots\subsetneq Z_l\subseteq Z$$ is of length $l$. We 
denote by $\dim Z$ the Krull dimension of $Z$.

\begin{proposition}\label{dimension}
For any closed point $x\in X$, $\dim\RZ{(X,x)}=d-1$.
\end{proposition}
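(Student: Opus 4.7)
The plan is to translate chains of irreducible closed subsets of $\RZ{(X,x)}$ into chains of valuations ordered by refinement, and then to bound the resulting length using Abhyankar's inequality on one side and a lexicographic monomial construction on the other. The description of closures in $\mathfrak{X}$ recalled earlier, $\overline{\{\nu\}}=\{\nu'\in\mathfrak{X}:R_{\nu'}\subseteq R_\nu\}$, combined with the fact that the Zariski--Riemann space (and hence its subspace $\RZ{(X,x)}$) is sober, lets me identify every irreducible closed subset of $\RZ{(X,x)}$ with the closure of a unique generic valuation. A chain $\emptyset\subsetneq Z_0\subsetneq\ldots\subsetneq Z_l$ of such subsets then corresponds bijectively to a chain of valuations $\nu_0,\ldots,\nu_l$ in $\RZ{(X,x)}$ with $R_{\nu_0}\supsetneq R_{\nu_1}\supsetneq\ldots\supsetneq R_{\nu_l}$.

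For the upper bound, any valuation ring of $K$ containing $R_{\nu_l}$ is a localization $(R_{\nu_l})_{\mathfrak{p}}$ at some prime $\mathfrak{p}$ of $R_{\nu_l}$, so the above chain gives $l+1$ distinct primes $\mathfrak{p}_0\subsetneq\ldots\subsetneq\mathfrak{p}_l=m_{\nu_l}$ in $R_{\nu_l}$. Non-triviality of $\nu_0$ forces $\mathfrak{p}_0\neq 0$, because the trivial valuation is centered at the generic point of $X$ rather than at a closed point, and so $l+1\leq\rank{\nu_l}$. Abhyankar's inequality applied to $\nu_l$ dominating the $d$-dimensional local ring $\mathcal{O}_{X,x}$, whose fraction field has transcendence degree $d$ over $k$, gives $\rank{\nu_l}\leq d$, whence $l\leq d-1$.

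For the lower bound, I would exhibit a valuation $\nu\in\RZ{(X,x)}$ of rank $d$ whose $d-1$ non-trivial proper coarsenings all lie in $\RZ{(X,x)}$, since these then form a chain of length $d-1$. Fix a system of parameters $f_1,\ldots,f_d$ of $\mathcal{O}_{X,x}$. Since $\mathcal{O}_{X,x}$ is an excellent local domain, $\hat{\mathcal{O}}_{X,x}$ is equidimensional of dimension $d$, and for any minimal prime $\mathfrak{q}$ of $\hat{\mathcal{O}}_{X,x}$ (which contracts to $0$ in $\mathcal{O}_{X,x}$ by going-down for the flat morphism $\mathcal{O}_{X,x}\to\hat{\mathcal{O}}_{X,x}$), Cohen's structure theorem presents $\hat{\mathcal{O}}_{X,x}/\mathfrak{q}$ as a finite extension of $k[[f_1,\ldots,f_d]]$. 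On this power series ring I would define the monomial valuation $\nu$ by $\nu(f_i)=(1,\ldots,1,0,\ldots,0)\in\mathbb{Z}^d$ (with $i$ ones), where $\mathbb{Z}^d$ carries the lexicographic order, extend it to $\mathrm{Frac}(\hat{\mathcal{O}}_{X,x}/\mathfrak{q})$, and finally restrict to $K\hookrightarrow\mathrm{Frac}(\hat{\mathcal{O}}_{X,x}/\mathfrak{q})$. Since every $\nu(f_j)$ has positive first coordinate, a property inherited by every lexicographic quotient, each coarsening still takes positive values on all of $f_1,\ldots,f_d$ and therefore dominates $\mathcal{O}_{X,x}$; the elements $f_1$ and the ratios $f_i/f_{i-1}\in K$ have values with first non-zero coordinate at positions $1,2,\ldots,d$, so the value group of the restricted valuation meets every convex level of $\mathbb{Z}^d$, its rank remains $d$, and its $d-1$ proper coarsenings provide the required chain.

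The main obstacle, especially when $x$ is singular, lies in this final restriction step: because the extension $\mathrm{Frac}(\hat{\mathcal{O}}_{X,x}/\mathfrak{q})/K$ has enormous (typically uncountable) transcendence degree, a priori the rank of the restricted valuation could drop. The key is that one can choose the parameters $f_i$ already in $\mathcal{O}_{X,x}$, so that the values of the $f_i$ and their ratios, all elements of $K$, witness the full lexicographic rank, and to verify carefully that the coarsenings of the restriction genuinely correspond to the restrictions of the coarsenings of $\nu$, so that the resulting $d$ valuations of $K$ remain distinct.
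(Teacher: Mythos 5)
Your proposal is correct in outline but takes a genuinely different route from the paper on both halves of the inequality, and it has one small but real slip worth flagging.

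For the upper bound, the paper does not argue directly with chains of valuations: it cites Vaqui\'e's result that $\dim\mathfrak{X}=d$, observes that $\mathrm{RZ}(X)$ is irreducible (as the closure of the trivial valuation), and uses that $\RZ{(X,x)}$ is a \emph{proper} closed subset of the irreducible space $\mathrm{RZ}(X)$ to conclude $\dim\RZ{(X,x)}<\dim\mathrm{RZ}(X)\leq d$. Your argument is more self-contained: you invoke sobriety of $\RZ{(X,x)}$ (which holds, being a closed subspace of an open subspace of the spectral space $\mathfrak{X}$) to translate a chain of irreducible closed sets into a chain of valuation rings, read off a chain of primes in the smallest one, and apply Abhyankar's inequality to bound its rank by $d$. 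The one thing to fix: with $\nu_i$ the generic point of $Z_i$ and $Z_0\subsetneq\cdots\subsetneq Z_l$, the description $\overline{\{\nu\}}=\{\nu':R_{\nu'}\subseteq R_\nu\}$ gives $R_{\nu_0}\subsetneq\cdots\subsetneq R_{\nu_l}$, the \emph{opposite} of the inclusions you wrote; consequently the primes live in $R_{\nu_0}$, the smallest ring, and it is the prime corresponding to $R_{\nu_l}$ that must be nonzero because $\nu_l$ (being in $\RZ{(X,x)}$) is not the trivial valuation. The bound $l\leq\rank\nu_0-1\leq d-1$ then follows as you intended.

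For the lower bound, the paper proceeds geometrically: it takes a divisorial valuation $\nu_1$ on the normalized blow-up of $X$ at $x$, with residue field of transcendence degree $d-1$, and composes downward step by step with rank-one valuations of the successive residue fields, producing a chain of $d$ composite valuations with strictly decreasing transcendence degree; distinctness of the closures follows from the $T_0$ property. You instead build one rank-$d$ valuation at once by pushing a system of parameters through Cohen's structure theorem, putting a lexicographic monomial valuation on $k[[f_1,\ldots,f_d]]$, extending to the finite overfield $\mathrm{Frac}(\widehat{\mathcal{O}}_{X,x}/\mathfrak{q})$ (using equidimensionality of the completion, which holds since $\mathcal{O}_{X,x}$ is universally catenary), and restricting to $K$; the witnesses $f_1$ and $f_i/f_{i-1}$ keep the full rank $d$ from collapsing under restriction. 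Both strategies give a chain of length $d-1$; the paper's version avoids the completion and the delicate ``rank may drop under restriction'' issue entirely by working in $K$ throughout, whereas yours trades that for a concrete explicit valuation and avoids the choice of a transcendence-degree-decreasing residue valuation at each step. Your choice of weights $\nu(f_i)=(1,\ldots,1,0,\ldots,0)$, all with first coordinate $1$, is exactly what guarantees the lexicographic minimum defining the monomial valuation on power series is achieved, and also what ensures each proper coarsening still takes strictly positive values on the $f_i$ and hence (since these generate an $\mathfrak m_{X,x}$-primary ideal) is centered at $x$, so both details you mention as needing verification do check out.
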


\begin{proof}
We have already pointed out that $\dim\RZ{(X,x)}=0$ when $X$ is a curve. Suppose now 
that $d>1$.

 Let $\pi:\widetilde{X}\rightarrow X$ be the normalization of the blowing-up of $X$ at 
the closed point $x\in X$. Let us fix an irreducible component $E_1$ of $\pi^{-1}(x)$. We call 
$\nu_1$ the divisorial valuation defined by $E_1$. Note that $\text{tr.deg}_k k_{\nu_1}=d-1>0$. 
We choose a valuation 
$\overline{\nu}_1$ of the residue field $k_{\nu_1}$ such that $k\subseteq R_{\overline{\nu}_1}$ 
and $\text{tr.deg}_k k_{\overline{\nu}_1}=d-2$. If we denote by $p_1:R_{\nu_1}\rightarrow k_{\nu_1}$ 
the canonical projection, then $p_1^{-1}(R_{\overline{\nu}_1})$ is a valuation ring of $K$ 
dominating $\mathcal{O}_{X,x}$ and contained in $R_{\nu_1}$. We call $R_{\nu_2}$ this valuation 
ring. Using the classical notation, we have $\nu_2=\nu_1\circ\overline{\nu}_1$. The valuation 
$\nu_2$ belongs to the closure of $\nu_1$ in $\RZ{(X,x)}$ and 
$\text{tr.deg}_k k_{\nu_2}=\text{tr.deg}_k k_{\overline{\nu}_1}=d-2$. If $d-2>0$, then we can 
repeat this construction defining similarly $\overline{\nu}_2,p_2$ and $\nu_3$. 

 Observe that we may iterate the above construction until we have built a sequence of 
composite valuations $\nu_{i+1}=\nu_i\circ\overline{\nu}_i$ for $1\leq i\leq d-1$ in $\RZ{(X,x)}$ 
such that 
\begin{equation}\label{chain}
\emptyset\subsetneq\overline{\set{\nu_d}}\subseteq\overline{\set{\nu_{d-1}}}\subseteq
\ldots\subseteq\overline{\set{\nu_1}}\subseteq\RZ{(X,x)},\tag{$\star$}
\end{equation}
\noindent where the bar means closure in $\RZ{(X,x)}$. Since $\set{\nu_i}$ is irreducible in 
$\RZ{(X,x)}$ so is its closure. We now observe that two different valuations have 
different closures. This follows from the fact that $\RZ{(X,x)}$ is a Kolmogorov space. 
Indeed, if $\nu$ and $\nu'$ are two distinct valuations of $\RZ{(X,x)}$, then either 
$R_\nu\not\subset R_{\nu'}$ or $R_{\nu'}\not\subset R_\nu$. Without loss of generality 
we may suppose that there exists $f\in K$ such that $f\in R_\nu$ and $f\notin R_{\nu'}$. 
Then $E(f)\cap\RZ{(X,x)}$ is an open set containing $\nu$ but not containing $\nu'$, 
and the $T_0$ axiom is satisfied. The chain (\ref{chain}) gives the inequality 
$\text{dim }\RZ{(X,x)}\geq d-1$.

 By \cite[Proposition~7.8]{Vaq}, the space $\mathfrak{X}$ has Krull dimension $d$, 
so that we have the inequalities $\text{dim }\RZ{(X,x)}\leq\text{dim }\text{RZ}(X)\leq d$. 
Since $\mathfrak{X}$ is the closure of the trivial valuation, it is an irreducible space 
and as a consequence so is $\text{RZ}(X)$. The fact that $x$ is a closed point implies that 
$\RZ{(X,x)}$ is a proper closed subset of $\text{RZ}(X)$. Therefore the Krull dimension of 
$\RZ{(X,x)}$ must be strictly less than the Krull dimension of $\text{RZ}(X)$. The only 
possibility is then $\text{dim }\RZ{(X,x)}=d-1$ and $\text{dim }\text{RZ}(X)=d$.
\end{proof} 


\subsection{The normalized non-Archimedean link of \texorpdfstring{$\boldsymbol{x}$}{x} in 
\texorpdfstring{$\boldsymbol{X}$}{X}}
\label{subsectionNL}

 For the rest of this section we denote by $|\cdot|_0$ the trivial absolute 
value of $k$. We follow the notations introduced in \cite{FanC}. 

 We associate to $X$ its \emph{analytification} $X^{\text{an}}$ in the sense of 
\cite{Ber}, which is defined as follows. Consider the set of all pairs $(\xi,|\cdot|)$ where 
$\xi$ is a point of $X$ (not necessarily closed) and $|\cdot|$ is an absolute value of the 
residue field $\kappa(\xi)$ of $X$ at $\xi$ extending $|\cdot|_0$. The space $X^{\text{an}}$ 
consists of this set equipped with the weakest topology such that,

\begin{enumerate}[$\bullet$]
\item the natural projection $\imath:X^{\text{an}}\to X$ which maps $(\xi,|\cdot|)$ to $\xi$, 
is continuous, and 
\item for any open subset $U\subseteq X$ and any $f\in\mathcal O_X(U)$, the map 
$\imath^{-1}(U)\to\mathbf R_{\mc}$ which sends $(\xi,|\cdot|)$ to $|f(\xi)|$ 
is continuous, where $f(\xi)$ denotes the residue class of $f$ in $\kappa(\xi)$. 
\end{enumerate}

 The topological space $X^{\text{an}}$ is always Hausdorff, and it is compact 
if and only if $X$ is a complete variety (see \cite[Theorem~3.5.3]{Ber}).

 The \emph{residue field} of $X^{\text{an}}$ at a point $\mathrm x=(\xi,|\cdot|)$ 
is the completion of $\kappa(\xi)$ with respect to the absolute value $|\cdot|$. We denote it 
by $\mathscr H(\mathrm x)$ and its valuation ring by $\mathscr H(\mathrm x)^o$. The inclusion 
$\kappa(\xi)\hookrightarrow\mathscr H(\mathrm x)$ induces a morphism 
$\text{Spec}\:\mathscr H(\mathrm x)\to X$. If it extends to a morphism 
$\text{Spec}\:\mathscr H(\mathrm x)^o\to X$ then we say that $\mathrm x$ has a center in $X$ 
(it follows from the valuative criterion of separateness that this morphism is unique whenever 
it exists). We denote the image of the closed point of $\text{Spec}\:\mathscr H(\mathrm x)^o$ 
under this morphism by $\spe_X(\mathrm x)$ and we call it the \emph{center} of $\mathrm x$ in 
$X$. We denote by $X^\beth$ the set of points in $X^\text{an}$ which have a center in $X$ and 
we endow it with the induced topology. By the valuative criterion of properness, $X^\beth$ and 
$X^\text{an}$ coincide if and only if $X$ is a complete variety (see \cite[Proposition~1.10]{Thu}).

 The \emph{specialization map} $\spe_X:X^\beth\to X$ which sends any point of 
$X^\beth$ to its center in $X$ is an anticontinuous map (i.e.\, the inverse image 
of any open subset of $X$ is closed in $X^\beth$). Recall that the analogous map in the 
Riemann--Zariski setting from $\textnormal{RZ}(X)$ to $X$ is continuous. Furthermore, 
for all $\mathrm x\in X^\beth$ the point $\spe_X(\mathrm x)$ belongs to the closure of 
$\imath(\mathrm x)$ in $X$.

 Let us fix a closed point $x$ of $X$. As we did in the previous section, we 
focus on the fiber of the specialization map above the point $x$. We define 
$\text L(X,x)=\spe_X^{-1}(x)\setminus\set{(x,|\cdot|_0)}$, which is an open subset of 
$X^\beth$, and we endow it with the topology induced by the topology of $X^\beth$. 
The space $\text L(X,x)$ was introduced in \cite{Thu} (in a higher degree of generality) 
where it is called the generic fiber of the formal completion of $X$ along $x$ (in 
\cite{FanC} it is referred to as the \emph{non-Archimedean link of $x$ in $X$}). If $x$ 
is the singular locus of $X$, then \cite[Proposition~4.7]{Thu} shows that $\text L(X,x)$ 
is homotopy equivalent to the dual complex associated to the exceptional divisor of a 
resolution of singularities of $(X,x)$ whose exceptional divisor has simple normal crossings. 
This result holds whenever the base field $k$ is perfect.

 Let $\pi_{X'}:X'\to X$ be a proper birational map. Let us assume that $\pi_{X'}$ induces 
an isomorphism over $X\setminus\set{x}$. Given a point 
$\mathrm x=(\xi,|\cdot|)$ in $\text L(X,x)$, we call $\xi'$ the generic point of the strict 
transform of the subvariety $\overline{\set{\xi}}\subseteq X$ under $\pi_{X'}$. Since 
$\spe_X(\mathrm x)=x$, by the valuative criterion of properness, we have $\mathrm x':=
(\xi',|\cdot|)\in {X'}^\beth$ and $\pi_{X'}(\spe_{X'}(\mathrm x'))=x$. We call 
$\spe_{X'}(\mathrm x')$ the \emph{center} of $\mathrm x$ in $X'$. We obtain in this way an 
anticontinuous map $\spe_{X'}:\text L(X,x)\to X'$, $\mathrm x\mapsto\spe_{X'}(\mathrm x')$.

 We now identify two points of the space $\text L(X,x)$ if they define the same 
valuation. More precisely, we consider the following action of $\mathbf R_{\mec}$ on 
$\text L(X,x)$: given $\lambda\in\mathbf R_{\mec}$ and a point $(\xi,|\cdot|)$ of 
$\text L(X,x)$, we define $\lambda\cdot(\xi,|\cdot|)=(\xi,|\cdot|^\lambda)$. 

\begin{definition}
Given a closed point $x\in X$, the normalized non-Archimedean link of $x$ in $X$, 
denoted by $\NL{(X,x)}$, is the quotient of $\text L(X,x)$ by the group action defined 
above, endowed with the quotient topology. 
\end{definition}

 In \cite{Fan} the space $\NL{(X,x)}$ is introduced for an arbitrary subvariety 
of $X$ and endowed with a richer structure. This space carries a natural analytic structure 
locally modeled on affinoid spaces over $k(\!(t)\!)$. However, these local $k(\!(t)\!)$-analytic 
structures are not canonical and cannot in general be glued to get a global one. We refer 
to \cite{Fan} for details. In this work we only concern ourselves with the topology of 
$\NL{(X,x)}$.

 We start here a series of results that will allow us to describe the space $\NL{(X,x)}$ in 
terms of normalized semivaluations and adopt an additive point of view. With different motivations, 
the space of normalized semivaluations associated to a pair $(X,x)$ was considered before in the 
case of the origin in the complex plane in \cite{VT} and for the germ of a normal surface singularity 
in \cite{CFh}.

 Let $A$ be an integral domain containing the field $k$. A \emph{semivaluation} of $A$ is a 
 map $\nu:A\rightarrow\mathbf{R}\cup\set{+\infty}$ which satisfies $\nu(0)=+\infty$, 
$\nu_{|k^*}=0$ and $\nu(fg)=\nu(f)+\nu(g)$, $\nu(f+g)\geq\min\set{\nu(f),\nu(g)}$ for any 
$f,g\in A$. Note that such a $\nu$ extends to a valuation of the fraction field 
of $A$ if and only if it takes the value $+\infty$ only at zero. In general, the set 
$\sop{\nu}:=\nu^{-1}(+\infty)$ is a prime ideal of $A$ and $\nu$ defines a valuation of the 
fraction field $\kappa(\sop{\nu})$ of the integral domain $A/\sop{\nu}$.

 We say that a semivaluation of A is \emph{centered} if it takes values in $[0,\infty]$. We 
denote by $\Val(A)$ the set of all centered semivaluations of $A$ endowed with the 
\emph{topology of pointwise convergence}. This topology has for a basis of open sets finite 
intersections of subsets of the form $\set{\nu\in\Val(A)\:/\:a<\nu(f)<b}$ where $a$ and $b$ 
are nonnegative real numbers and $f$ belongs to $A$. In other words, it is the topology 
induced by the product topology in $[0,+\infty]^A=\prod_j{Y_j}$, where each $Y_j$ is a copy 
of $[0,+\infty]$ and the product is indexed by $A$. The group $\mathbf R_{\mec}$ now acts 
on $\Val(A)$ by multiplication: $(\lambda\cdot\nu)(f)=\lambda\:\nu(f)$ for all $f\in A$.

\begin{lemma}\label{tychonoff}
The space $\Val(A)$ is compact.
\end{lemma}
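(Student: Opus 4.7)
The plan is to exhibit $\Val(A)$ as a closed subspace of the compact Hausdorff product space $[0,+\infty]^A$ described in the definition, and then invoke Tychonoff's theorem.

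First, I would recall that $[0,+\infty]$ with its order topology is compact Hausdorff (it is homeomorphic to $[0,1]$). Therefore, by Tychonoff's theorem, the product $\prod_{f\in A}Y_f=[0,+\infty]^A$, equipped with the product topology, is compact Hausdorff. By definition of $\Val(A)$, the topology of pointwise convergence is the subspace topology inherited from this product, once we identify a centered semivaluation $\nu$ with the tuple $(\nu(f))_{f\in A}$.

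Next, I would verify that $\Val(A)$ is closed in $[0,+\infty]^A$ by expressing the axioms of a centered semivaluation as an intersection of closed conditions. For each $f\in A$, write $\pi_f:[0,+\infty]^A\to[0,+\infty]$ for the projection. The conditions $\pi_0(\nu)=+\infty$ and $\pi_c(\nu)=0$ for each $c\in k^*$ carve out closed subsets, since single points are closed in the Hausdorff space $[0,+\infty]$. For each pair $f,g\in A$, the multiplicativity condition $\nu(fg)=\nu(f)+\nu(g)$ is the preimage of the diagonal of $[0,+\infty]^2$ under the continuous map $\nu\mapsto(\pi_{fg}(\nu),\pi_f(\nu)+\pi_g(\nu))$, hence closed; similarly the ultrametric inequality $\nu(f+g)\geq\min\{\nu(f),\nu(g)\}$ is the preimage of the closed set $\{(x,y)\in[0,+\infty]^2:x\geq y\}$ under an analogous continuous map. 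Intersecting these countably or uncountably many closed subsets yields a closed set, namely $\Val(A)$ itself.

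The only genuine subtlety, and the one point I would expect to check carefully, is the continuity on $[0,+\infty]$ of the operations $+$ and $\min$ with the convention $a+(+\infty)=+\infty$: this is a direct verification from the definition of the order topology on $[0,+\infty]$, and from it the continuity of the relevant maps $[0,+\infty]^A\to[0,+\infty]^2$ follows because each projection $\pi_f$ is continuous. Once this is granted, the closedness of each defining condition is immediate, $\Val(A)$ becomes a closed subspace of a compact space, and hence is itself compact.
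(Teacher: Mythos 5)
Your proposal is correct and follows the same overall strategy as the paper: realize $\Val(A)$ as a subspace of the compact Hausdorff product $[0,+\infty]^A$ and show it is closed there, then invoke Tychonoff. The only difference is cosmetic — the paper verifies closedness by taking a convergent net in $\Val(A)$ and checking its limit satisfies the semivaluation axioms, whereas you express $\Val(A)$ directly as an intersection of preimages of closed sets under continuous maps (using the continuity of $+$ and $\min$ on $[0,+\infty]$, which you correctly flag as the one point needing care). These two mechanisms for proving closedness are interchangeable, so the arguments are essentially the same.
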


\begin{proof}
The space $[0,+\infty]^A$ is Hausdorff and quasi-compact (by Tychonoff's Theo\-rem). Hence 
$\Val(A)$ is Hausdorff, and in order to prove its quasi-compactness it suffices to show 
that it is closed in $[0,+\infty]^A$. In what follows an arrow indicates convergence.

 Let $(\nu_i)_{i\in I}$ be a net in $\Val(A)$ and $\nu\in[0,+\infty]^A$ such that $\nu_i\to\nu$. 
Then we have that $\nu_i(f)\to\nu(f)$ in $[0,+\infty]$ for any $f\in A$. It is straightforward 
to verify that $\nu(0)=+\infty$, $\nu_{|k^*}=0$, and $\nu(fg)=\nu(f)+\nu(g)$ for any $f,g\in A$. 
To conclude that $\Val(A)$ is closed and end the proof, it remains to take $f,g\in A$ and show 
that $\nu(f+g)\geq\min\set{\nu(f),\nu(g)}$. Denote $M_i=\min\set{\nu_i(f),\nu_i(g)}$, $i\in I$. 
The map from $[0,+\infty]\times[0,+\infty]$ to $[0,+\infty]$ defined by 
$(a,b)\mapsto\min\set{a,b}$, is continuous. We have $(\nu_i(f),\nu_i(g))\to(\nu(f),\nu(g))$, 
thus $M_i\to\min\set{\nu(f),\nu(g)}$. Since $\nu_i(f+g)\geq M_i$ for all $i\in I$ 
and $\nu_i(f+g)\to\nu(f+g)$, this yields $\nu(f+g)\geq\min\set{\nu(f),\nu(g)}$.
\end{proof}

 From now on we assume that $A$ is noetherian. Let $\mathfrak m$ be an ideal of $A$. For each 
$\nu\in\Val(A)$ we set $\nu(\mathfrak m)=\min\set{\nu(f)\:/\:f\in\mathfrak m}$. This minimum is 
well--defined. Indeed, if $\set{f_1,\ldots,f_m}$ is a system of generators of $\mathfrak m$, 
then $\nu(\mathfrak m)=\min\set{\nu(f_i)}_{i=1}^m$. Furthermore, the map from $\Val(A)$ to 
$[0,+\infty]$ defined by $\nu\mapsto\nu(f_i)$ is continuous, for $1\leq i\leq m$. Hence we 
also get a continuous map from $\Val(A)$ to $[0,+\infty]$ when sending $\nu$ to $\nu(\mathfrak m)$.

 The \emph{center} of $\nu\in\Val(A)$ is the prime ideal $\set{f\in A\:/\:\nu(f)>0}$ 
of $A$. Note that it contains the ideal $\sop{\nu}$. By the previous paragraph, 
the subset $\Val(A,\mathfrak m)$ of $\Val(A)$ consisting of all semivaluations $\nu$ satisfying 
the condition $\nu(\mathfrak m)=1$, is closed. We endow this subset with the induced topology 
and call it the \emph{space of normalized semivaluations with respect to }$\mathfrak{m}$. 
Since $\Val(A)$ is compact, the space $\Val(A,\frak m)$ is also compact. In addition, 
every semivaluation $\nu$ such that $0<\nu(\frak m)<+\infty$ (i.e.\ such that $\frak m$ is 
contained in the center of $\nu$ but not in $\sop{\nu}$) is proportional to a unique 
normalized semivaluation in $\Val(A,\frak m)$.

\begin{lemma}\label{lam}
Let $\mathcal L(A,\frak m)$ be the set $\set{\nu\in\mathcal V(A)\:/\:0<\nu(\frak m)<+\infty}$ 
equipped with the topology induced by the topology of $\mathcal V(A)$. The map 
\[
\begin{array}{crll}
        \eta: & \mathcal L(A,\frak m) & \rightarrow & \Val(A,\frak m) \\
        & \nu & \mapsto & \frac{\nu}{\nu(\frak m)}
\end{array}
\]
is surjective and continuous. Furthermore, it induces a homeomorphism from the quotient 
of $\mathcal L(A,\frak m)$ by the action of $\mathbf R_{\mec}$ to $\Val(A,\frak m)$.
\end{lemma}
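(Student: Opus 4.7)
The plan is to unpack the three claims one by one, exploiting that $\Val(A,\mathfrak m)$ sits inside $\mathcal L(A,\mathfrak m)$ as the ``slice at height $1$''.

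For surjectivity, I would first observe that $\Val(A,\mathfrak m)\subseteq\mathcal L(A,\mathfrak m)$ (any $\nu\in\Val(A,\mathfrak m)$ satisfies $\nu(\mathfrak m)=1$, in particular $0<\nu(\mathfrak m)<+\infty$), and that on this subset $\eta$ acts as the identity, because $\nu/\nu(\mathfrak m)=\nu/1=\nu$. This already gives a section $\iota:\Val(A,\mathfrak m)\hookrightarrow\mathcal L(A,\mathfrak m)$ with $\eta\circ\iota=\mathrm{id}$, proving surjectivity and providing a candidate for the inverse of the induced map.

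For continuity, I would argue that it suffices by definition of the topology on $\Val(A,\mathfrak m)$ (induced from the product topology on $[0,+\infty]^A$) to check that for each $f\in A$, the map $\nu\mapsto\eta(\nu)(f)=\nu(f)/\nu(\mathfrak m)$ is continuous on $\mathcal L(A,\mathfrak m)$. Here the numerator $\nu\mapsto\nu(f)$ is continuous by construction of the pointwise-convergence topology, the denominator $\nu\mapsto\nu(\mathfrak m)$ is continuous by the remark immediately preceding the lemma, and on $\mathcal L(A,\mathfrak m)$ the denominator takes values in the open interval $(0,+\infty)$ on which division is continuous (with values in $[0,+\infty]$). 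A small care point: I must verify that no indeterminate form $0\cdot\infty$ or $\infty/\infty$ arises, which is exactly what the defining condition $0<\nu(\mathfrak m)<+\infty$ of $\mathcal L(A,\mathfrak m)$ ensures.

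To show that $\eta$ factors through $\mathcal L(A,\mathfrak m)/\mathbf R_{\mec}$ and that the factorization $\bar\eta$ is a homeomorphism, I would first check that $\eta$ is constant on orbits: if $\nu'=\lambda\nu$ for some $\lambda>0$, then $\nu'(\mathfrak m)=\lambda\nu(\mathfrak m)$, so $\nu'/\nu'(\mathfrak m)=\nu/\nu(\mathfrak m)$. By the universal property of the quotient topology, $\eta$ induces a continuous map $\bar\eta:\mathcal L(A,\mathfrak m)/\mathbf R_{\mec}\to\Val(A,\mathfrak m)$. Conversely, the composition $q\circ\iota:\Val(A,\mathfrak m)\to\mathcal L(A,\mathfrak m)/\mathbf R_{\mec}$, where $q$ is the quotient projection, is continuous as a composition of continuous maps. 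I would then verify that these two maps are mutually inverse: one direction is $\bar\eta\circ q\circ\iota=\mathrm{id}$ on $\Val(A,\mathfrak m)$ because $\eta$ restricted to $\Val(A,\mathfrak m)$ is the identity; the other, $q\circ\iota\circ\bar\eta=\mathrm{id}$ on $\mathcal L(A,\mathfrak m)/\mathbf R_{\mec}$, holds because for any $\nu\in\mathcal L(A,\mathfrak m)$ the normalized semivaluation $\nu/\nu(\mathfrak m)$ lies in the same $\mathbf R_{\mec}$-orbit as $\nu$. This yields the desired homeomorphism.

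There is no serious obstacle in this argument; the only mildly delicate point is confirming that the denominator in the normalization formula never reaches $0$ or $+\infty$ on $\mathcal L(A,\mathfrak m)$, so that $\eta$ is genuinely a continuous map and not merely defined off a singular locus.
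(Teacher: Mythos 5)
Your proposal is correct and follows essentially the same route as the paper: continuity via the continuity of $\nu\mapsto\nu(\frak m)$, descent to the quotient because $\eta$ identifies exactly proportional semivaluations, and the inverse of the induced map given by composing the inclusion $\Val(A,\frak m)\hookrightarrow\mathcal L(A,\frak m)$ with the quotient projection. Your extra care about the coordinatewise continuity and the avoidance of indeterminate forms is a harmless elaboration of the same argument.
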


\begin{proof}
The continuity of $\eta$ follows from the continuity of the map from $\Val(A)$ to 
$[0,+\infty]$, $\nu\mapsto\nu(\frak m)$. Observe that $\eta(\nu)=\eta(\nu')$ if and 
only if $\nu$ and $\nu'$ are proportional, so $\eta$ descends to the quotient. We get 
a map $\mathcal L(A,\frak m)/\mathbf R_{\mec}\to\Val(A,\frak m)$ which is continuous and 
bijective. Its inverse map is the composition of the embedding $\Val(A,\frak m)\hookrightarrow
\mathcal L(A,\frak m)$ and the quotient map $\mathcal L(A,\frak m)\to
\mathcal L(A,\frak m)/\mathbf R_{\mec}$.
\end{proof}

 In the case of a maximal ideal, we have the following:

\begin{lemma}\label{semivalOXx}
Suppose that $\frak m$ is a maximal ideal of $A$. Denote by $\jmath:A\to A_{\frak m}$ the 
canonical ring homomorphism. The map 
\[
\begin{array}{crll}
        \tilde\jmath:&\Val(A_{\frak m},\frak m A_{\frak m}) & \rightarrow & \Val(A,\frak m) \\
        &\nu & \mapsto & \nu\circ\jmath
\end{array}
\]
is a homeomorphism.

\end{lemma}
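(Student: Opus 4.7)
The plan is straightforward: show that $\tilde\jmath$ is a continuous bijection between compact Hausdorff spaces, so it is automatically a homeomorphism. Well-definedness and continuity are clear: for $\nu\in\Val(A_{\mathfrak m},\mathfrak m A_{\mathfrak m})$, the composition $\nu\circ\jmath$ inherits the semivaluation axioms from $\nu$, is centered, and satisfies $(\nu\circ\jmath)(\mathfrak m)=\nu(\mathfrak m A_{\mathfrak m})=1$; continuity reduces to the continuity of each evaluation map $\nu\mapsto\nu(f/1)$ at a given $f\in A$, which is built into the topology of pointwise convergence.

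The heart of the argument is a single observation exploiting the maximality of $\mathfrak m$: for any $\mu\in\Val(A,\mathfrak m)$ the center $\set{f\in A\:/\:\mu(f)>0}$ is a prime ideal of $A$ containing $\mathfrak m$, hence equals $\mathfrak m$; in particular $\mu(g)=0$ for every $g\in A\setminus\mathfrak m$. The analogous statement holds in $A_{\mathfrak m}$: the center of any $\nu\in\Val(A_{\mathfrak m},\mathfrak m A_{\mathfrak m})$ contains the unique maximal ideal $\mathfrak m A_{\mathfrak m}$ and so equals it, whence $\nu(g/1)=0$ whenever $g\notin\mathfrak m$.

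Using this, injectivity is immediate: for $\nu\in\Val(A_{\mathfrak m},\mathfrak m A_{\mathfrak m})$ and $f/g\in A_{\mathfrak m}$, multiplicativity gives $\nu(f/g)=\nu(f/1)-\nu(g/1)=(\nu\circ\jmath)(f)$, so $\nu$ is determined by $\tilde\jmath(\nu)$. For surjectivity, given $\mu\in\Val(A,\mathfrak m)$ I would define $\tilde\mu(f/g):=\mu(f)$ for any $g\notin\mathfrak m$. Independence of the representative follows from the fact that $f/g=f'/g'$ in $A_{\mathfrak m}$ implies $h(fg'-f'g)=0$ in $A$ for some $h\notin\mathfrak m$; applying $\mu$ and using $\mu(g)=\mu(g')=\mu(h)=0$ yields $\mu(f)=\mu(f')$. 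Checking the semivaluation axioms, centeredness, and the normalization $\tilde\mu(\mathfrak m A_{\mathfrak m})=1$ is then routine, and by construction $\tilde\jmath(\tilde\mu)=\mu$. Finally, since $\Val(A,\mathfrak m)$ and $\Val(A_{\mathfrak m},\mathfrak m A_{\mathfrak m})$ are closed subspaces of the compact Hausdorff spaces produced in Lemma~\ref{tychonoff}, the continuous bijection $\tilde\jmath$ is a homeomorphism. The only subtle point is noticing that the maximality of $\mathfrak m$ forces the center of every $\mu\in\Val(A,\mathfrak m)$ to be exactly $\mathfrak m$; without this, the extension $\tilde\mu$ would not be well-defined.
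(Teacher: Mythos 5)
Your proof is correct and follows the same approach as the paper's: the crucial observation in both is that the maximality of $\mathfrak m$ forces the center of any $\mu\in\Val(A,\mathfrak m)$ to be exactly $\mathfrak m$, so that $\mu$ vanishes on $A\setminus\mathfrak m$ and the inverse map $\mu\mapsto(f/g\mapsto\mu(f))$ makes sense. The paper simply writes down this inverse and declares the remaining checks (injectivity, continuity, continuity of the inverse) straightforward, whereas you carry them out, and you replace the direct verification of bicontinuity with the standard compact-to-Hausdorff argument via Lemma~\ref{tychonoff}; this is a harmless and arguably cleaner way to finish. One tiny remark: since $A$ is an integral domain (a standing hypothesis in this subsection), the localization relation $f/g=f'/g'$ simplifies to $fg'=f'g$ directly, so the auxiliary $h$ is unnecessary, though your version with $h$ is also correct.
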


\begin{proof}
Given $\nu\in\mathcal L(A,\frak m)$, the inequalities $0<\nu(\frak m)<+\infty$ and the 
maximality of $\frak m$ imply that $\sop{\nu}\subsetneq\set{f\in A\:/\:\nu(f)>0}=\frak m$. 
In particular, $\nu(g)=0$ for any $g\in A\setminus\frak m$. If $\nu(\frak m)=1$, then 
$\nu':A_{\frak m}\to[0,+\infty]$ defined by $\nu'(f/g)=\nu(f)$ belongs to 
$\Val(A_{\frak m},\frak m A_{\frak m})$ and $\nu'\circ\jmath=\nu$. Hence $\tilde\jmath$ is 
surjective. It is just as straightforward to verify that $\tilde\jmath$ is also injective, 
continuous and with continuous inverse.
\end{proof} 

 The most significant example of $\Val(A,\frak m)$ for us is the \emph{valuative tree} $\Val$ 
of \cite{VT}, which corresponds to $A=\mathbf C[[x,y]]$ and $\frak m=(x,y)$. Let us also 
point out here that we may define a different normali\-zation on 
$\widetilde\Val:=\mathcal L(\mathbf C[[x,y]],(x,y))$. More precisely, we may consider those 
semivaluations of $\widetilde\Val$ taking the value 1 on $z$, where $z\in(x,y)$ and 
$\text{ord}\:z=1$. 
This subset together with the valuation $\text{ord}_z$ and equipped with 
the topology induced by the topology of $\Val(\mathbf C[[x,y]])$ is denoted by $\Val_z$. 
Recall that given a non zero $f\in\mathbf C[[x,y]]$, $\text{ord}_z(f)$ is the largest power 
of $z$ which divides $f$. Observe that $\text{ord}_z\notin\widetilde\Val$ because its center 
is the ideal generated by $z$. The space $\Val_z$ is called \emph{the relative valuative tree 
with respect to }$z$. Such a $z$ gives rise to a semivaluation $\nu_z\in\Val$ by 
considering the intersection multiplicity with $\set{z=0}$ at the origin. The map from $\Val$ 
to $\Val_z$ sending $\nu\neq\nu_z$ to $\nu/\nu(z)$ and $\nu_z$ to $\text{ord}_z$, is a 
homeomorphism (see \cite[Lemma~3.59]{VT}).

 The following is a direct consequence of the previous definitions:

\begin{lemma}\label{terminologycentered}
Suppose that $X$ is an affine variety and denote by $A$ its ring of regular functions. 
The map from $X^\beth$ to $\Val(A)$ sending $(\xi,|\cdot|)\in X^\beth$ to the 
centered semivaluation of $A$ defined by $f\mapsto-\ln|f(\xi)|$ is a homeomorphism.
\end{lemma}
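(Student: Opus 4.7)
The plan is to exhibit an explicit inverse and then show both maps are continuous using the ``weakest topology'' characterization of both spaces.

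First, denote the map by $\Phi$. I would check well-definedness by verifying that for $\mathrm{x}=(\xi,|\cdot|)\in X^\beth$, the assignment $\nu_\mathrm{x}(f):=-\ln|f(\xi)|$ satisfies the axioms of a centered semivaluation of $A$. The equalities $\nu_\mathrm{x}(0)=+\infty$, $\nu_\mathrm{x}(fg)=\nu_\mathrm{x}(f)+\nu_\mathrm{x}(g)$, and the ultrametric inequality $\nu_\mathrm{x}(f+g)\geq\min\set{\nu_\mathrm{x}(f),\nu_\mathrm{x}(g)}$ transcribe directly the corresponding properties of the non-Archimedean absolute value $|\cdot|$ on $\kappa(\xi)$, while $\nu_\mathrm{x}|_{k^*}=0$ follows from $|\cdot|$ extending $|\cdot|_0$. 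The centeredness $\nu_\mathrm{x}(f)\geq 0$ is precisely $|f(\xi)|\leq 1$, which for affine $X=\textnormal{Spec}\,A$ is equivalent to the ring homomorphism $A\to\kappa(\xi)\hookrightarrow\mathscr H(\mathrm x)$ factoring through $\mathscr H(\mathrm x)^o$, i.e.\ to $\mathrm x$ having a center in $X$.

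Next, I would construct the inverse. Given $\nu\in\Val(A)$, set $\mathfrak p:=\sop{\nu}$, which is a prime ideal, and let $\xi$ be the corresponding point of $X=\textnormal{Spec}\,A$. Then $\nu$ descends to a (finite-valued) valuation on the integral domain $A/\mathfrak p$, which extends uniquely to its fraction field $\kappa(\xi)$ by $\nu(f/g):=\nu(f)-\nu(g)$; setting $|\cdot|_\nu:=\exp(-\nu(\cdot))$ yields a non-Archimedean absolute value on $\kappa(\xi)$ extending $|\cdot|_0$. The hypothesis $\nu(f)\geq 0$ for $f\in A$ gives $|f(\xi)|_\nu\leq 1$ and hence $(\xi,|\cdot|_\nu)\in X^\beth$. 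The assignments $\mathrm x\mapsto\nu_\mathrm x$ and $\nu\mapsto(\xi,|\cdot|_\nu)$ are mutually inverse because $\xi$ is determined by $\sop{\nu}$ and, conversely, $|\cdot|$ on $\kappa(\xi)$ is determined by its restriction to the image of $A$.

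For the topology, I would invoke the fact that both target spaces are defined by the weakest topology making certain evaluation maps continuous. On $\Val(A)$ these are the maps $\nu\mapsto\nu(f)\in[0,+\infty]$ for $f\in A$. On $X^\beth$ (with $X$ affine) the generating maps $\mathrm x\mapsto|f(\xi)|$ for $f\in A$ already suffice: the continuity requirement for $\imath:X^\beth\to X$ is automatic since basic opens $D(g)\subseteq X$ pull back to $\set{\mathrm x\,:\,|g(\xi)|>0}$. Since $-\ln:[0,1]\to[0,+\infty]$ is a homeomorphism and the map $\Phi$ transports the evaluation at $f$ on the right to $-\ln$ composed with the evaluation at $f$ on the left, $\Phi$ and its inverse are both continuous on basic opens, hence homeomorphisms.

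The verification is largely formal; the only point where some care is required is reconciling ``$\mathrm x$ has a center in $X$'' (a scheme-theoretic condition via the valuative criterion of separateness) with ``$\nu$ is centered'' (the numerical condition $\nu(A)\subseteq[0,+\infty]$), and similarly checking that no opens of $X^\beth$ are missed by the absolute-value evaluation maps alone---both of which become transparent once one specializes to the affine case.
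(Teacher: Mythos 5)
Your proposal is correct and follows essentially the same route as the paper: the inverse is constructed from the prime ideal $\sop{\nu}$ and the absolute value $e^{-\nu}$ on $\kappa(\sop{\nu})$, and continuity in both directions is read off from the generating families of the two topologies. You supply more detail than the paper (which dismisses continuity as immediate), in particular the useful observation that for affine $X$ the evaluation maps $\mathrm x\mapsto|f(\xi)|$ with $f\in A$ already generate the topology of $X^\beth$, so nothing is lost in the comparison.
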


\begin{proof}
A point $\mathrm x=(\xi,|\cdot|)$ of $X^\text{an}$ belongs to $X^\beth$ if and only if 
$|f(\xi)|\leq1$ for all $f\in A$. When this is satisfied, 
$\spe_X(\mathrm x)=\set{f\in A\:/\:|f(\xi)|<1}$. 
The map in the statement is well defined and the center of the image of $\mathrm x$ coincides 
with $\spe_X(\mathrm x)$. The inverse map is defined as follows. Given 
$\nu\in\Val(A)$, abusing notation, we call $\nu$ the valuation induced on $\kappa(\sop{\nu})$. 
It is enough to consider the prime ideal $\sop{\nu}$ of $A$ and the absolute value 
$|\cdot|=e^{-\nu}$ of $\kappa(\sop{\nu})$. Continuity follows directly from the definitions.
\end{proof}

 Let us now come back to our general setting. Let $U=\text{Spec}\:A$ be an open affine 
neighborhood of $x$ in $X$ and let $\frak m$ be the maximal ideal of $A$ corresponding to 
the point $x$. On the one hand, for any $\mathrm x=(\xi,|\cdot|)\in\text L(X,x)$, since 
$\spe_X(\mathrm x)=x$ belongs to the closure of $\xi$ in $X$, the point $\xi$ belongs to 
$U$. Hence the homeomorphism from $U^\beth$ to $\Val(A)$ of Lemma~\ref{terminologycentered} 
restricts to a homeomorphism from $\text L(X,x)$ to $\mathcal L(A,\frak m)$. Then, from the 
definitions of the actions of $\mathbf R_{\mec}$ on these spaces and Lemma~\ref{lam}, we 
get a homeomorphism $\psi$ from $\NL{(X,x)}$ to $\Val(A,\frak m)$. On the other hand, by 
Lemma~\ref{semivalOXx}, we have a homeomorphism $\tilde\jmath:\Val(\mathcal O_{X,x},
\frak m_{X,x})\to\Val(A,\frak m)$. We conclude the following:

\begin{proposition}
The map ${\tilde\jmath}^{-1}\circ\psi:\NL{(X,x)}\to \Val(\mathcal O_{X,x},\frak m_{X,x})$ 
is a homeomorphism.
\end{proposition}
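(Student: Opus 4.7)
The plan is to observe that this proposition is essentially a packaging of the homeomorphisms constructed immediately before its statement, so the proof should amount to verifying that the pieces fit together and that the relevant restrictions and quotients match on the nose. I would begin by fixing an open affine neighborhood $U=\text{Spec}\,A$ of $x$ in $X$ with $\frak m\subset A$ the maximal ideal corresponding to $x$, and recalling from Lemma~\ref{terminologycentered} that we have a homeomorphism $\Phi:U^\beth\to\Val(A)$ sending $(\xi,|\cdot|)$ to $f\mapsto-\ln|f(\xi)|$.

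Next, I would verify that $\Phi$ restricts to a homeomorphism $\text L(X,x)\to\mathcal L(A,\frak m)$. For this, note that if $\mathrm x=(\xi,|\cdot|)\in\text L(X,x)$, then $\spe_X(\mathrm x)=x$ lies in the closure of $\xi$, so $\xi\in U$ and $\mathrm x\in U^\beth$. The compatibility between $\spe_X(\mathrm x)$ and the center of $\Phi(\mathrm x)$ (already noted in Lemma~\ref{terminologycentered}) shows that $\spe_X(\mathrm x)=x$ if and only if $\Phi(\mathrm x)(\frak m)>0$. Moreover, the trivial point $(x,|\cdot|_0)$ corresponds to the semivaluation sending $\frak m$ to $+\infty$, so excluding it on the left is equivalent to imposing $\Phi(\mathrm x)(\frak m)<+\infty$ on the right. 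Combining these two conditions, the image of $\text L(X,x)$ under $\Phi$ is exactly $\mathcal L(A,\frak m)$.

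Then I would check equivariance with respect to the $\mathbf R_{\mec}$-actions: the action $\lambda\cdot(\xi,|\cdot|)=(\xi,|\cdot|^\lambda)$ on $\text L(X,x)$ is transported by $\Phi$ into the action $(\lambda,\nu)\mapsto\lambda\nu$ on $\mathcal L(A,\frak m)$ because $-\ln|f(\xi)|^\lambda=\lambda\cdot(-\ln|f(\xi)|)$. Passing to the quotient and invoking Lemma~\ref{lam}, the induced map $\psi:\NL{(X,x)}\to\Val(A,\frak m)$ is a homeomorphism. Finally, since $\frak m$ is maximal, Lemma~\ref{semivalOXx} provides the homeomorphism $\tilde\jmath:\Val(\mathcal O_{X,x},\frak m_{X,x})\to\Val(A,\frak m)$, and composing yields the homeomorphism ${\tilde\jmath}^{-1}\circ\psi$.

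The main subtlety, and the only real obstacle, is making sure that the various notions of ``center'' match up: the Berkovich center defined via the valuative criterion of separatedness on the one hand, and the center of a semivaluation as a prime ideal of $A$ on the other. This is the technical point already handled inside Lemma~\ref{terminologycentered}, and I would explicitly appeal to it when identifying the restriction of $\Phi$. Everything else is bookkeeping: the proof is really a diagram-chase showing that the composite $\text L(X,x)\to\mathcal L(A,\frak m)\to\mathcal L(A,\frak m)/\mathbf R_{\mec}\to\Val(A,\frak m)\leftarrow\Val(\mathcal O_{X,x},\frak m_{X,x})$ is independent of the chosen affine chart $U$, which follows since the final target depends only on the local ring at $x$.
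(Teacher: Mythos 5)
Your proposal is correct and follows essentially the same route as the paper, which proves this proposition in the paragraph immediately preceding its statement by restricting the homeomorphism of Lemma~\ref{terminologycentered} to $\text L(X,x)\to\mathcal L(A,\frak m)$, passing to the $\mathbf R_{\mec}$-quotients via Lemma~\ref{lam}, and composing with the homeomorphism of Lemma~\ref{semivalOXx}. You merely spell out in more detail the identification of the image of $\text L(X,x)$ and the equivariance of the actions, which the paper leaves implicit.
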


 In what follows we will identify $\NL{(X,x)}$ with the space of normalized semivaluations 
$\Val(\mathcal O_{X,x},\frak m_{X,x})$ and use additive notation. In particular, via this 
identification an element of $\NL{(X,x)}$ is a semivaluation $\nu:\mathcal O_{X,x}\to[0,\infty]$ 
and we are allowed to evaluate it on any $f\in\mathcal O_{X,x}$. We will extensively use this 
fact throughout the rest of this article.

 Finally, we compute the covering dimension of $\NL{(X,x)}$. Recall that the order 
of a family of subsets, not all empty, of a topological space $Z$ is the largest $n$ (if it 
exists) such that the family contains $n+1$ elements with non empty intersection. If such an 
integer $n$ does not exist, then the order is said to be $+\infty$. The \emph{covering dimension} 
of $Z$ is the least $n$ such that any finite open cover of $Z$ has a refinement of order 
not exceeding $n$, or $+\infty$ if there is no such integer.

\begin{remark}
 When $d=1$, any semivaluation of $\NL{(X,x)}$ is in fact a valuation of 
the function field $K$ of $X$ whose valuation ring dominates $\mathcal O_{X,x}$. Thus 
$\NL{(X,x)}$ is a finite space in bijection with $\RZ{(X,x)}$. Since $\NL{(X,x)}$ is 
Hausdorff, it is a discrete topological space and every open cover of the space has a 
refinement consisting of disjoint open sets, so that its covering dimension is zero.
\end{remark}
 
More generally, by using the dimension theory for Berkovich spaces in \cite{Ber}, we 
have:

\begin{proposition}\label{dimNL}
The covering dimension of $\NL{(X,x)}$ is $d-1$.
\end{proposition}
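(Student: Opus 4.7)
My approach combines the description of $\NL{(X,x)}$ as the space of normalized semivaluations $\Val(\mathcal O_{X,x},\mathfrak m_{X,x})$ with Berkovich's theorem on topological dimensions of analytic spaces (\cite[Chapter~3]{Ber}).

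First, I would promote the identification of the preceding proposition to a product decomposition $\text L(X,x)\cong \NL{(X,x)}\times\mathbf R_{\mec}$. The map $\nu\mapsto(\nu/\nu(\mathfrak m_{X,x}),\nu(\mathfrak m_{X,x}))$ is a continuous bijection from $\mathcal L(\mathcal O_{X,x},\mathfrak m_{X,x})$ onto $\Val(\mathcal O_{X,x},\mathfrak m_{X,x})\times\mathbf R_{\mec}$, with continuous inverse $(\mu,t)\mapsto t\mu$. Via Lemmas~\ref{terminologycentered} and \ref{lam} this upgrades to a homeomorphism between $\text L(X,x)$ and $\NL{(X,x)}\times\mathbf R_{\mec}$.

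Second, I would verify $\dim\text L(X,x)=d$. Since $\spe_X$ is anticontinuous and $\{x\}$ is closed in $X$, the fiber $\spe_X^{-1}(x)$ is open in $X^\beth$, hence $\text L(X,x)$ is open in $X^\beth$. By Berkovich's theorem, $\dim X^\beth=d$, and moreover the local covering dimension at a point $\mathrm x\in X^\beth$ coincides with the dimension of $X$ at $\imath(\mathrm x)$. Now $\text L(X,x)$ contains rescaled divisorial valuations whose underlying scheme-theoretic point is the generic point of $X$ — for example, those coming from exceptional divisors of the blow-up of $x$ in $X$. The local covering dimension at such a point is $d$, and since $\dim\text L(X,x)\leq\dim X^\beth=d$, equality follows.

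Third, to descend from $\text L(X,x)$ to its quotient $\NL{(X,x)}$, I would invoke the product formula $\dim(K\times\mathbf R)=\dim K+1$ for $K$ compact Hausdorff, where compactness of $\NL{(X,x)}$ comes from Lemma~\ref{tychonoff}. Applied to the homeomorphism of the first step, this yields $\dim\NL{(X,x)}=d-1$.

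The main obstacle is the product-formula step: the equality $\dim(K\times\mathbf R)=\dim K+1$ is classical for compact metric $K$ (Hurewicz), but in the general compact Hausdorff setting it requires care. An attractive alternative, avoiding this entirely, is to use \cite{Fan}, where $\NL{(X,x)}$ is equipped with a natural $k(\!(t)\!)$-analytic structure of analytic dimension $d-1$, and then to apply Berkovich's topological-dimension theorem directly to that structure.
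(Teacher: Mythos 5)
Your reduction via the homeomorphism $\text L(X,x)\cong\NL{(X,x)}\times\mathbf R_{\mec}$, $\nu\mapsto(\nu/\nu(\frak m),\nu(\frak m))$, is correct (both directions are continuous for the topology of pointwise convergence), and this is a genuinely different organization from the paper's. However, the two load-bearing steps are left unproved, and the first of them hides the actual content of the proposition. For the lower bound $\dim\text L(X,x)\geq d$ you invoke a statement of the form ``the local covering dimension of $X^\beth$ at a point over the generic point of $X$ is $d$.'' This is not what \cite[Theorem~3.4.8]{Ber} gives: that theorem computes the \emph{global} topological dimension of a paracompact strictly $k$-analytic space, and covering dimension is not monotone on arbitrary subsets of a compact Hausdorff space (the subset theorem holds for closed, or $F_\sigma$, subsets of normal spaces, not for open ones in general), so knowing $\dim X^\beth=d$ says nothing a priori about the open subset $\text L(X,x)$ or about any particular point of it. To justify your local claim you would have to exhibit, inside $\text L(X,x)$, a closed subset of covering dimension $d$ --- equivalently a closed $(d-1)$-dimensional subset of $\NL{(X,x)}$ --- and that is exactly what Steps 2 and 3 of the paper's proof do: the explicit closed simplex $\varphi(\Delta)$ of monomial valuations $\nu_{\mathsmaller\beta}$ attached to $d$ exceptional components meeting at a point of a log-resolution, and, when $x$ is singular, the surjectivity of the restriction map $\NL{(X,x)}\to\NL{(\mathbf A_k^d,0)}$. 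As written, your lower bound assumes what has to be proved.

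The second gap is the one you flag yourself: $\dim(K\times\mathbf R)=\dim K+1$ for $K$ compact Hausdorff but non-metrizable. The inequality $\leq$ is Morita's product theorem, but the inequality $\geq$ requires the theorem on essential mappings for normal spaces, and since both the upper and the lower bound for $\NL{(X,x)}$ pass through this formula, the proof is incomplete until it is supplied. Your proposed escape route --- apply Berkovich's dimension theorem to Fantini's $k(\!(t)\!)$-analytic structure --- is, once made precise, the paper's Step 1 in disguise: that structure is only local and cannot be glued, so one must cover $\NL{(X,x)}$ by finitely many closed pieces, identify each with a closed subspace of a $(d-1)$-dimensional analytic space over $k(\!(t)\!)$, and conclude by the countable sum theorem; this is exactly what the paper does with the sets $F_i=\set{\nu\:/\:\nu(f_i)=1}$. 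Two smaller points: the inequality $\dim\text L(X,x)\leq\dim X^\beth$ does hold, but because $\text L(X,x)=\bigcup_{n\geq1}\set{\nu\:/\:1/n\leq\nu(\frak m)\leq n}$ is $F_\sigma$ in $X^\beth$, not by bare monotonicity; and the applicability of \cite[Theorem~3.4.8]{Ber} to $X^\beth$ over the \emph{trivially} valued field $k$ deserves justification --- note that the paper deliberately passes to varieties over $k(\!(t)\!)$ before invoking it.
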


\begin{proof}
Let us denote by $\delta$ the covering dimension of $\NL{(X,x)}$. 

\textit{Step 1: $\delta\leq d-1$.} Every compact space is normal, that is, any two disjoint 
closed subsets of the space have disjoint neighborhoods. Therefore, $\NL{(X,x)}$ is a 
normal space. If $\NL{(X,x)}=\bigcup_{i\geq1}F_i$ where each $F_i$ is a closed subspace of 
covering dimension not exceeding $d-1$, then by \cite[Ch.~3, Theorem~2.5]{Pe} the covering 
dimension of $\NL{(X,x)}$ is not greater than $d-1$. Let us show that such a family 
$\set{F_i}_{i\geq1}$ exists.

 Take an open affine neighborhood $U=\text{Spec}\:A$ of $x$ in $X$ and a system of 
generators $\set{f_1,\dots,f_s}$ of the maximal ideal $\frak m$ corresponding to the point 
$x$. For all $1\leq i\leq s$, we define $F_i=\set{\nu\in\NL{(X,x)}\:/\:\nu(f_i)=1}$. Since 
all the maps $\nu\mapsto\nu(f_i)$ are continuous, $\set{F_i}_{i=1}^s$ is a family of closed 
subset of $\NL{(X,x)}$. We have $\NL{(X,x)}=\bigcup_{i=1}^s F_i$, so in order to conclude 
that $\delta\leq d-1$ it suffices to show that $F_i$ has covering dimension at most $d-1$ 
for all $i\in\set{1,\ldots,s}$.

 Let us take an integer $i$, $1\leq i\leq s$. The regular function $f_i\in A$ induces a 
morphism $U\to\text{Spec}\:k[t]=\mathbf A_k^1$ so that $A$ can be regarded as a $k[t]$-module. 
We set $B=A\otimes_{k[t]}k(\!(t)\!)$. Note that $f_i\otimes1=1\otimes t$ in $B$. Since $A$ 
is a $k$-algebra of finite type, $B$ is a $k(\!(t)\!)$-algebra of finite type. We denote 
$V=\text{Spec}\:B$ and consider its analytification $V^\text{an}$ in the sense of Berkovich. 
By \cite[Theorem~3.4.8(iv)]{Ber}, the covering dimension of $V^\text{an}$ is equal to the 
dimension of $V$, which is $d-1$. Therefore the covering dimension of any closed subspace 
of $V^\text{an}$ is at most $d-1$ (see \cite[Ch.~3, Proposition~1.5]{Pe}). To complete 
the proof we now show that $F_i$ can be identified with a closed subspace of $V^\text{an}$.

 By our choice of notation, $V^\text{an}$ is the set of semivaluations of $B$ which extend 
the valuation $\text{ord}_t$ of $k(\!(t)\!)$, endowed with the topology of pointwise 
convergence. In other words, the underlying set of $V^\text{an}$ consists of all 
semivaluations $\nu$ of $B$ that are trivial on $k^*$ and satisfy $\nu(1\otimes t)=1$. 
We consider the closed subspace 
\[
W=\set{\nu\in V^\beth\:/\:\nu(f_j\otimes1)\geq1\;\forall j\in\set{1,\ldots,s},\:j\neq i}
\]
of $V^\text{an}$. We claim that $W$ is homeomorphic to $F_i$. Indeed, the map $\varphi$ from 
$W$ to $F_i$ which sends $\nu\in W$ to the semivaluation of $A$ defined by $g\mapsto\nu(g\otimes1)$ 
is continuous. Furthermore, $\varphi$ is a bijection. If $\nu$ is a semivaluation of $A$ 
lying in $F_i$ then it extends in a unique way to a semivaluation of $B$. This extension is 
defined by $\widetilde\nu(g\otimes1)=\nu(g)$ and $\widetilde\nu(1\otimes t)=
\widetilde\nu(f_i\otimes1)=1$. Since $W$ is quasi-compact and $F_i$ is Hausdorff, the map 
$\varphi$ is a homeomorphism.

\textit{Step 2: $\delta\geq d-1$, regular case.} 
Assume first that the point $x$ is regular. Since the covering 
dimension is monotone on closed subspaces, it suffices to find a closed subspace of $\NL{(X,x)}$ 
of covering dimension $d-1$. Let $\pi:X'\to X$ be a proper birational map such that $X'$ is 
non-singular, $\pi$ is an isomorphism over $X\setminus\set{x}$ and the exceptional locus 
$E=\text{supp}\:\pi^{-1}(x)$ is a simple normal crossings divisor. 
After a finite sequence of blowing-ups centered at closed points, we may assume 
that there exist $d$ irreducible components $E_1,\ldots,E_d$ of the exceptional locus such 
that $\bigcap_{i=1}^d E_i$ is reduced to a point $p$. We denote by $b_1,\ldots,b_d$ 
their multiplicities. Pick local coordinates $(z_1,\ldots,z_d)$ at the point $p$ such that 
$E_i=\set{z_i=0}$. Since the point $p$ is regular, the completion of the local ring of $X'$ 
at $p$ is isomorphic as $k$-algebra to $k[[z_1,\ldots,z_d]]$. Any 
$\beta=(\beta_1,\dots,\beta_d)\in\mathbf{R}_{\mc}^d$ such that $\sum_{i=1}^d{\beta_i b_i}=1$ 
gives rise to a valuation $\nu_{\mathsmaller\beta}$ of $\NL{(X,x)}$. It suffices to set 
$\nu_{\mathsmaller\beta}(0)=+\infty$ and 
\[
\nu_{\mathsmaller\beta}(f)=\min\set{\sum_{i=1}^d{\beta_i\alpha_i}\:/\:c_\alpha\neq0},
\]
 if $f\in A_\frak m$ is written as $\sum_{\alpha}{c_\alpha z^\alpha}$ 
in $k[[z_1,\ldots,z_d]]$. We call $\Delta$ the simplex 
$$\set{(y_1,\ldots,y_d)\in{\mathbf R}_{\mc}^d\:/\:\sum_{i=1}^d y_i=1}$$ and we define a map 
$\varphi:\Delta\to\NL{(X,x)}$ as follows. Given $y=(y_1,\ldots,y_d)\in\Delta$, we set 
$\varphi(y)=\nu_{\mathsmaller{\beta(y)}}$ where $\beta(y)=(y_1/b_1,\ldots,y_d/b_d)$. The 
map $\varphi$ is injective and continuous (see \cite{MJ} for details). The fact that $\Delta$ 
is quasi-compact implies that $F=\varphi(\Delta)$ is quasi-compact and thus closed in 
$\NL{(X,x)}$. Therefore $\varphi$ yields a homeomorphism between $\Delta$ and a closed 
subspace $F$ of $\NL{(X,x)}$. Since $\Delta$ has covering dimension $d-1$, the subspace 
$F$ has also covering dimension $d-1$. This ends the proof in the regular case.

\textit{Step 3: $\delta\geq d-1$, general case.} We now treat the general case. 
The completion $R$ of the local ring of $X$ at $x$ is an integral 
extension of a formal power series ring $A=k[[z_1,\ldots,z_d]]$. Hence we get a natural 
continuous map $\NL{(X,x)}\to\NL{(\mathbf{A}_k^d,0)}$ by restriction (the passage to formal 
power series is justified in Proposition~\ref{extsemi}, which is independent of this result). 
Let us show that this map is surjective. Observe that this ends the proof since the covering 
dimension of $\NL{(\mathbf{A}_k^d,0)}$ is at least $d-1$ by Step $2$.

Take $\nu\in\NL{(\mathbf{A}_k^d,0)}$. We need to show that $\nu$ extends to a semivaluation 
of $R$. By abuse of notation we also call $\nu$ the rank one valuation it induces on the 
fraction field $H$ of $A/\sop{\nu}$. Let $\mathfrak q$ be a prime ideal of $R$ such that 
$\sop{\nu}=\mathfrak q\cap A$. Note that the extension $A/\sop{\nu}\hookrightarrow R/\mathfrak q$ 
is integral. Let $\hat\nu$ be an extension of $\nu$ to $L$, where $L$ 
denotes the fraction field of $R/\mathfrak q$. On the one hand, the field extension 
$H\hookrightarrow L$ is algebraic so the valuation $\hat\nu$ has rank one. On the other 
hand, we have $R/\mathfrak q\subseteq\overline{A/\sop{\nu}}\subseteq R_{\hat\nu}$, where 
the bar stands for the integral closure in $L$. Therefore $\hat\nu$ induces a centered 
semivaluation of $R$ extending $\nu$. In addition, the fact that the prime ideal 
$\mathfrak m=m_{\hat\nu}\cap R/\mathfrak q$ of $R/\mathfrak q$ intersects $A/\sop{\nu}$ in 
its maximal ideal implies that $\mathfrak m$ is itself maximal. In order to get a 
semivaluation in $\NL{(X,x)}$ one just needs to divide $\hat\nu$ by $\hat\nu(\mathfrak m)$.
\end{proof}

\subsection{The canonical map from RZ\texorpdfstring{$\boldsymbol{(X,x)}$}{(X,x)} to 
NL\texorpdfstring{$\boldsymbol{(X,x)}$}{(X,x)}}
\label{canonicalmap}

 Let $x\in X$ be a closed point. The aim of this subsection is to show that there 
exists a natural surjective continuous map from $\RZ{(X,x)}$ to $\NL{(X,x)}$.

 Given a valuation $\nu\in\RZ{(X,x)}$ of rank $r\geq1$, we consider the 
maximal chain of valua\-tion rings of $K$, 
$R_\nu=R_{\nu_r}\subsetneq R_{\nu_{r-1}}\subsetneq\ldots\subsetneq\ R_{\nu_1}$. 
Let us choose $j$ to be the smallest integer $i\in\set{1,\ldots,r}$ such that 
$\nu_{i}$ is centered in $x$. If $j=1$, then we set 
\[
\pi(\nu)(f)=\frac{\nu_1(f)}{\nu_1(\frak{m}_{X,x})}
\] 
 for any non zero $f\in\mathcal{O}_{X,x}$. Let us now suppose $r>1$ and $j>1$. 
We define then $\pi(\nu)$ as follows.

 The quotient ring $R_{\nu_j}/m_{\nu_{j-1}}$ is a valuation ring of the 
residue field of the valuation $\nu_{j-1}$. It corresponds to the rank one valuation 
$\overline{\nu}_{j-1}$ such that $\nu_j=\nu_{j-1}\circ\overline{\nu}_{j-1}$. By 
definition of the integer $j$, its center in the ring 
$\mathcal{O}_{X,x}/(\mathcal{O}_{X,x}\cap m_{\nu_{j-1}})$ is 
$\frak{m}_{X,x}/(\mathcal{O}_{X,x}\cap m_{\nu_{j-1}})$. Therefore, given a non zero 
$f\in\mathcal{O}_{X,x}$, by setting 
\[
\pi(\nu)(f) =
\begin{cases}
\overline{\nu}_{j-1}(\bar f) & \text{if } f\notin m_{\nu_{j-1}}\\
+\infty & \text{otherwise}
\end{cases}
\]
 where $\bar f$ denotes the residue class of $f$ in 
$\mathcal{O}_{X,x}/(\mathcal{O}_{X,x}\cap m_{\nu_{j-1}})$, we define a semivaluation 
$\pi(\nu)$ on $X$ centered in $x$. After division by a suitable constant we obtain an 
element of $\NL{(X,x)}$ that, by abuse of notation, we will also call $\pi(\nu)$.

 Let us introduce some notations. Given $\nu\in\RZ{(X,x)}$, in the sequel 
we will denote by $\nu_*$ the valuation $\nu_j$ and when $j>1$, by $\nu_*'$ the valuation 
$\nu_{j-1}$ and by $\overline{\nu}_*$ the rank one valuation $\overline{\nu}_{j-1}$.

\begin{proposition}\label{pi}
The map $\pi:\RZ{(X,x)}\rightarrow\NL{(X,x)}$ is surjective and continuous.

\end{proposition}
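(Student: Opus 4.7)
For surjectivity, I would argue constructively using the identification $\NL{(X,x)}\cong\Val(\mathcal{O}_{X,x},\mathfrak{m}_{X,x})$ from the preceding proposition. Given $\mu$ in this space, set $\mathfrak{p}:=\sop{\mu}$; since $\mu(\mathfrak{m}_{X,x})=1\neq+\infty$, this prime is strictly contained in $\mathfrak{m}_{X,x}$. If $\mathfrak{p}=0$ then $\mu$ itself extends to a rank-one valuation of $K$ dominating $\mathcal{O}_{X,x}$, so $\mu\in\RZ{(X,x)}$ and $\pi(\mu)=\mu$ directly. In the general case, Chevalley's dominating-ring theorem produces a valuation $\nu_1$ of $K$ dominating the local ring $(\mathcal{O}_{X,x})_{\mathfrak{p}}$; by construction its center in $\mathcal{O}_{X,x}$ is exactly $\mathfrak{p}\subsetneq\mathfrak{m}_{X,x}$, so $\nu_1$ is not centered at $x$. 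The inclusion $\kappa(\mathfrak{p})\hookrightarrow k_{\nu_1}$ lets one extend $\mu$, viewed as a rank-one valuation of $\kappa(\mathfrak{p})$, to a rank-one valuation $\overline{\mu}$ of $k_{\nu_1}$. Setting $\nu:=\nu_1\circ\overline{\mu}$, one verifies that $\nu\in\RZ{(X,x)}$, that in the notation preceding the statement $\nu_*'=\nu_1$, $\nu_*=\nu$ and $\overline{\nu}_*=\overline{\mu}$, and hence $\pi(\nu)=\mu$.

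For continuity, the identification of $\NL{(X,x)}$ with $\Val(\mathcal{O}_{X,x},\mathfrak{m}_{X,x})$ equipped with the pointwise convergence topology reduces the task to showing that $\phi_f:\nu\mapsto\pi(\nu)(f)$ is continuous from $\RZ{(X,x)}$ to $[0,+\infty]$ for each $f\in\mathcal{O}_{X,x}$. Fixing generators $f_1,\ldots,f_s$ of $\mathfrak{m}_{X,x}$, my plan is to establish the identities
\begin{align*}
\set{\nu:\phi_f(\nu)>a}&=\bigcup_{\substack{p,q\in\mathbf{Z}_{>0}\\p/q>a}}\bigcup_{i=1}^s E(f^q/f_i^p)\cap\RZ{(X,x)},\\
\set{\nu:\phi_f(\nu)<b}&=\bigcup_{\substack{p,q\in\mathbf{Z}_{>0}\\(p+1)/q<b}}\bigcap_{i,k=1}^s E(f_i^p f_k/f^q)\cap\RZ{(X,x)},
\end{align*}
which are manifestly open. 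The inclusions $\supseteq$ are straightforward projections from $\Gamma_\nu$ to the rank-one quotient in which $\overline{\nu}_*$ takes values. The reverse inclusions rely crucially on $\overline{\nu}_*$ being of rank one: for $\phi_f(\nu)>a$ and a rational $p/q\in(a,\phi_f(\nu))$, the strict inequality $q\overline{\nu}_*(\overline{f})>p\overline{\nu}_*(\overline{f_{i_0}})$ (for $i_0$ realizing the minimum of the $\overline{\nu}_*(\overline{f_i})$) lifts from the rank-one quotient to a strict inequality $\nu(f^q)>\nu(f_{i_0}^p)$ in $\Gamma_\nu$ itself, so that $f^q/f_{i_0}^p\in R_\nu$.

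The hardest step is the second formula. A naïve analysis yields only the condition $f_i^p/f^q\in m_\nu$ for all $i$ and some $p/q<b$, but $\set{g\in m_\nu}=E(g)\setminus E(g^{-1})$ is the intersection of an open with a closed set, in general neither open nor closed in $\RZ{(X,x)}$. The device is to multiply by a further generator $f_k$ of $\mathfrak{m}_{X,x}$: the strict lift to $\Gamma_\nu$ automatically yields $f_i^p f_k/f^q\in m_\nu\subset R_\nu$ for every $i,k$, an honestly open condition, at the mild cost of replacing the bound $p/q<b$ by the tighter $(p+1)/q<b$ to preserve the reverse inclusion (one checks that $(p+1)\overline{\nu}_*(\overline{\mathfrak{m}}_{X,x})\geq q\overline{\nu}_*(\overline{f})$ follows by taking the minimum over both $i$ and $k$). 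Verifying that this trade-off exactly recovers $\{\phi_f<b\}$ in both directions is the main technical content of the continuity argument.
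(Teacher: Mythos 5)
Your surjectivity argument coincides with the paper's: both split on whether $\mathfrak p=\sop{\mu}$ is zero, and otherwise compose $\mu$ (viewed on $\kappa(\mathfrak p)$) with a valuation of $K$ dominating the localization of $\mathcal{O}_{X,x}$ at $\mathfrak p$. Your continuity argument, however, follows a genuinely different route. The paper argues geometrically: fixing $\nu\in U_{>\alpha}$ (or $U_{<\alpha}$), it builds a normalized blowing-up $\psi\colon Y\to X$ of a judiciously chosen ideal of the form $(\mathfrak m^N+(f))\mathcal O_{X'}$ or $(\mathfrak m^p+(f^q))\mathcal O_{X'}$, and produces a neighborhood of $\nu$ inside $U_{>\alpha}$ as the preimage of a Zariski-open of $Y$ under the continuous center map $\cen\colon\RZ{(X,x)}\to Y$; a claim separating ``nice'' from ``bad'' exceptional components via a further blowing-up is the technical crux. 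You instead express the preimages of the two families of sub-basic opens of $\NL{(X,x)}$ directly as countable unions of finite intersections of basic Zariski opens $E(\cdot)\cap\RZ{(X,x)}$, exploiting the fact that a strict inequality in the rank-one residual $\overline{\nu}_*$ (or in $\nu_1$ when $j=1$) lifts to $\nu(g)>0$, and hence to the open membership $g\in R_\nu$, while conversely a non-strict $E(\cdot)$-condition projects to a non-strict inequality in $\overline{\nu}_*$ which, together with the strict rational bound $p/q>a$ (resp.\ $(p+1)/q<b$), recovers the strict estimate on $\phi_f$. This is sound and more elementary --- it never leaves the Zariski topology of $\RZ{(X,x)}$ and requires no models above $X$ --- at the price of somewhat delicate bookkeeping around the normalization $\nu(\mathfrak m)=\min_i\nu(f_i)$. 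One small streamlining: since $\nu(f_k)\ge\min_i\nu(f_i)$ for every $k$, the double intersection $\bigcap_{i,k}E(f_i^pf_k/f^q)$ in your second identity is equivalent to the simpler $\bigcap_i E(f_i^{p+1}/f^q)$, so the auxiliary index $k$ can be dropped.
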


 We will make use of the following result to prove Proposition~\ref{pi}.

\begin{lemma}\label{septrans}
Let $X$ be an algebraic variety and $Y,Z$ two closed subvarieties of $X$, neither one containing 
the other. Let $X'\to X$ be the blowing-up of $X$ at $Y\cap Z$ (defined by the sum 
$\mathcal I_Y+\mathcal I_Z$ of the ideal sheaves). Then the strict transforms of $Y$ and 
$Z$ have empty intersection. 
\end{lemma}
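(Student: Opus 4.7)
The plan is to work locally on $X'$ and exploit the fact that $\mathcal I_Y\cdot\mathcal O_{X'}+\mathcal I_Z\cdot\mathcal O_{X'}$ equals the ideal sheaf $\mathcal I_E$ of the exceptional divisor $E$, which is invertible by the universal property of the blow-up. The goal is to show that at every point $p\in X'$ the local ideals of $\tilde Y$ and $\tilde Z$ together generate the unit ideal.

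Fix $p\in X'$, write $R=\mathcal O_{X',p}$ and let $t\in R$ be a local equation of $E$. Since $X'$ is integral (as the blow-up of an integral variety along a proper closed subscheme), $t$ is a nonzerodivisor. Choose local generators $f_1,\ldots,f_m$ of $\mathcal I_Y$ and $g_1,\ldots,g_n$ of $\mathcal I_Z$ near $\pi(p)$. Since their images in $R$ lie in $\mathcal I_Y R+\mathcal I_Z R=(t)$, I can factor them as $f_i=t\phi_i$ and $g_j=t\psi_j$ for some $\phi_i,\psi_j\in R$. Then
\[
t\cdot\bigl((\phi_1,\ldots,\phi_m)+(\psi_1,\ldots,\psi_n)\bigr)=(t\phi_i,t\psi_j)=\mathcal I_Y R+\mathcal I_Z R=(t),
\]
and cancelling the nonzerodivisor $t$ yields $(\phi_1,\ldots,\phi_m)+(\psi_1,\ldots,\psi_n)=R$.

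Next I invoke the standard formula $\mathcal I_{\tilde Y}=(\mathcal I_Y\cdot\mathcal O_{X'}):\mathcal I_E^\infty$ for the strict transform to obtain
\[
\mathcal I_{\tilde Y,p}=(t\phi_1,\ldots,t\phi_m):(t)^\infty\supseteq(\phi_1,\ldots,\phi_m),
\]
and symmetrically $\mathcal I_{\tilde Z,p}\supseteq(\psi_1,\ldots,\psi_n)$. Summing, $\mathcal I_{\tilde Y,p}+\mathcal I_{\tilde Z,p}\supseteq R$, so $p\notin\tilde Y\cap\tilde Z$. Since $p$ was arbitrary, the strict transforms are disjoint.

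The only delicate step is the cancellation of $t$ in $t\cdot((\phi_i)+(\psi_j))=(t)$; this rests on $t$ being a nonzerodivisor, which is guaranteed by the integrality of $X'$. Once this is in place, the remainder is a direct ideal-theoretic manipulation together with the saturation description of the strict transform.
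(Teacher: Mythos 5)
Your proof is correct, and it rests on exactly the two facts the paper's proof uses: that $(\mathcal I_Y+\mathcal I_Z)\cdot\mathcal O_{X'}$ is the invertible ideal of the exceptional divisor, and that strict transforms are cut out by saturating the total-transform ideals with respect to it. The paper packages this via the standard charts of the blow-up (at each point the exceptional ideal is generated by one of the chosen generators $f_i$ or $g_j$, and on the corresponding locus the strict transform of $Y$, resp.\ of $Z$, is already empty), while you divide all generators by an arbitrary local equation $t$ and cancel the nonzerodivisor to conclude that the two strict-transform ideals are comaximal at every point; this is essentially the same argument, just rendered pointwise and in more detail.
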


\begin{proof}
Without loss of generality we may assume that $X$ is affine. Suppose that 
$\mathcal I_Y$ is generated by $\set{f_i}_{i=1}^n$ and $\mathcal I_Z$ by $\set{g_j}_{j=1}^m$. 
The strict transform of $Y$ is empty at a point $x'$ of $X'$ where 
$(\mathcal I_Y+\mathcal I_Z)\cdot\mathcal O_{X',x'}=(f_i)\mathcal O_{X',x'}$ for some $i$. 
Similarly, the strict transform of $Z$ is empty when this ideal is generated by $g_j$ for 
some $j$. Therefore the strict transforms of $Y$ and $Z$ are disjoint. 
\end{proof}

 We are now in position to prove Proposition~\ref{pi}.

\begin{proof}[Proof of Proposition~{\rm\ref{pi}}]
 Let us denote by $R$ the local ring of $X$ at the point $x$ and $\frak m$ 
its maximal ideal. Recall that we call $K$ its fraction field.

\textit{The map $\pi$ is surjective.} Recall that for any $v\in\NL{(X,x)}$, 
$\sop{v}=v^{-1}(+\infty)$ is a prime ideal of $R$. If this ideal is reduced to zero, 
then $v$ extends in a unique way to a rank one valuation of $K$ and the image by $\pi$ 
of this valuation is $v$. Otherwise $v$ induces a valuation $\overline v$ of the fraction 
field of the quotient ring $R/\sop{v}$ whose center in this ring is the maximal ideal 
$\frak m/\sop{v}$. The choice of a valuation $\nu'$ of $K$ such that $R_{\nu'}$ 
dominates the localization $R_\sop{v}$ gives us a composite valuation $\nu'\circ\overline v$ 
of $K$ with center $x$ in $X$ and whose image by $\pi$ is $v$. Therefore $\pi$ is a 
surjective mapping.

\textit{The map $\pi$ is continuous.} To prove that $\pi$ is continuous 
it suffices to show that the inverse image of any open set of the form 
$\set{v\in\NL{(X,x)}\:/\:a<v(f)<b}$, where $a,b\in\mathbf{Q}$, $1\leq a<b$ and 
$f\in\frak m$, is open in $\RZ{(X,x)}$. The openness of such a subset will follow 
once we have proved that the subsets
\begin{align*}
U_{>\alpha}=&\set{\nu\in\RZ{(X,x)}\:/\:\alpha<\pi(\nu)(f)\leq+\infty}\\
\intertext{and}
U_{<\alpha}=&\set{\nu\in\RZ{(X,x)}\:/\:\pi(\nu)(f)<\alpha}
\end{align*}
are open subsets of $\RZ{(X,x)}$ for any rational number $\alpha\geq1$ and any non 
zero $f\in\frak m$. We shall only prove that $U_{>\alpha}$ is open. The same arguments 
show that $U_{<\alpha}$ is also open. In order to prove that $U_{>\alpha}$ is open, 
for any $\nu\in U_{>\alpha}$ we build an open neighborhood $\mathcal U$ of $\nu$ in 
$\RZ{(X,x)}$ contained in $U_{>\alpha}$.

 Pick $\alpha=p/q$ with $p,q$ coprime integers, $p\geq q>0$ and a non zero 
$f\in\frak m$. We denote by $\phi:X'\to\text{Spec}\:R$ the blowing-up of $\text{Spec}\:R$ at 
its closed point. Let $\nu$ be a valuation in $U_{>\alpha}$.

\textit{Case 1: $\pi(\nu)(f)=+\infty$.} We consider the normalized 
blowing-up $\eta:Y\to X'$ of $(\mathfrak{m}^N+(f))\cdot\mathcal{O}_{X'}$, with $N >\alpha$. 
The composed 
morphism $\psi=\phi\circ\eta:Y \to X$ is a proper birational morphism that is an isomorphism 
over $X\setminus\set{x}$ and $\psi^{-1}(x)$ is purely of codimension one. Let 
$\cen:\RZ{(X,x)}\to Y$ be the continuous map which associates to any valuation in $\RZ{(X,x)}$ 
its center in $Y$. 

 Observe that $y=\cen(\nu)$ is contained the strict transform of $\set{f=0}$. 
Indeed, the hypothesis on $\pi(\nu)(f)$ implies that the center $z'$ of $\nu'_*$ in $X'$ 
is contained in $\set{f=0}$. Since $\nu'_*$ is not centered in the point $x\in X$, we 
deduce that $z'$ is not contained in the center of the blowing-up $\eta$. Hence $\cen(\nu'_*)$ 
(which is the strict transform of $z'$) is contained in the strict transform of $\set{f=0}$ 
and therefore $y$ is also contained in that strict transform. 

 Pick any irreducible component $E$ of $\psi^{-1}(x)$ that contains $y$. 
Let $g\in \mathcal{O}_{X,x}$ be such that 
$(g)\cdot\mathcal{O}_{Y,y}=\mathfrak{m}\cdot\mathcal{O}_{Y,y}$. 
Then $(g^N)\cdot\mathcal{O}_{Y,y}= (\mathfrak{m}^N + (f))\cdot \mathcal{O}_{Y,y}$, 
since otherwise $f$ would generate $(\mathfrak{m}^N + (f))\cdot \mathcal{O}_{Y,y}$ 
and this is impossible because the strict transform of $\set{f=0}$ contains $y$. 
We conclude that $g^N$ divides $f$ in $\mathcal{O}_{Y,y}$. Therefore 
$$\nu_E(\mathfrak{m})=\nu_E(g)\leq\frac{\nu_E(f)}{N}<\frac{\nu_E(f)}{\alpha},$$
where $\nu_E$ denotes the divisorial valuation defined by $E$. This means that all 
the irreducible components of $\psi^{-1}(x)$ containing the point $y$ verify 
$\pi(\nu_E)\in U_{>\alpha}$.

 Take an open neighborhood $U\subset Y$ of $y$ which is strictly contained in 
$E_{\mathcal G}\setminus(E_{\mathcal G}\cap D)$, where $E_{\mathcal G}$ is the union 
of all the irreducible components of $\psi^{-1}(x)$ containing $y$ and $D$ is the union 
of the remaining ones. Let us prove that $\mathcal U=\cen^{-1}(U)$ is an open subset 
of $\RZ{(X,x)}$ which satisfies the desired properties. 

 It is an open set since $U$ is open and $\cen$ is continuous, and it contains 
$\nu$ by construction. Finally we show that $\mathcal{U}$ is contained in $U_{>\alpha}$. 
Take $\mu\in\mathcal U$ and set $z=\cen(\mu_*)$. The center of $\mu$ in $Y$ belongs to $U$, 
so that $z$ is also in $U$. Since the center of $\mu_*$ in $X$ is the point $x$, there 
exists $E$ in $E_{\mathcal G}$ such that $z$ belongs to $E$. Pick $g\in\mathcal{O}_{X,x}$ 
such that $(g)\cdot\mathcal{O}_{Y,z}=\mathfrak{m}\cdot\mathcal{O}_{Y,z}$. We have 
$\pi(\nu_E)\in U_{>\alpha}$, so $\nu_E(f^p/g^q)>0$ and $f^p/g^q$ belongs to $\mathfrak{m}_{Y,z}$ 
(note that $f^p/g^q$ is without indeterminacy). In particular, $\mu_*(f^p/g^q)>0$ and we 
deduce that $\pi(\mu)(f)>\alpha$ as required.

\textit{Case 2: $\pi(\nu)(f)<+\infty$.} We replace the birational morphism 
$\eta:Y\to X'$ above by the normalized blowing-up of 
$(\mathfrak{m}^p+(f^q))\cdot\mathcal{O}_{X'}$. Abusing notation, we denote by $\eta$ 
this birational morphism. Write $\psi:Y\to X$ for the composed birational morphism as 
before and $c:\RZ{(X,x)}\to Y$ for the center map. 

 Denote $y_*=\cen(\nu_*)$ and pick $g\in\mathcal{O}_{X,x}$ such that 
$(g)\cdot\mathcal{O}_{Y,y_*}=\mathfrak{m}\cdot\mathcal{O}_{Y,y_*}$. By construction either 
$f^q/g^p$ or $g^p/f^q$ are regular at the point $y_*$. Since $\nu\in U_{>\alpha}$ we deduce 
that $f^q/g^p$ must vanish at $y_*$.

 We say that an irreducible component $E$ of $\psi^{-1}(x)$ is \emph{nice} when the 
image by the map $\pi$ of the divisorial valuation $\nu_E$ defined by $E$ belongs to $U_{>\alpha}$. 
It is \emph{bad} otherwise. We may assume that $y_*$ belongs at least to one nice component. 
Otherwise it suffices to consider the blowing-up $Y'\to Y$ of $Y$ with respect to the sheaf of 
ideals defining $y_*$. The center of $\nu_*$ in $Y'$ must be in the newly created exceptional 
divisor, all of whose components are nice because $f^q/g^p$ has to vanish on them. 

 Let $E_{\mathcal N}$ (resp. $E_{\mathcal B}$) be the union of all nice (resp. bad) 
components containing $y_*$ and $D$ the union of the irreducible components of $\psi^{-1}(x)$ 
which do not contain $y_*$. We denote by $\mathcal{I}_{\mathcal N}$ (resp. 
$\mathcal{I}_{\mathcal B}$) the sheaf of ideals defining $E_{\mathcal N}$ (resp. $E_{\mathcal B}$), 
and for any integer $l\ge1$ we consider the normalized blowing-up $\phi_l:Y_l\to Y$ of the sheaf 
of ideals $\mathcal{I}_{\mathcal N}^l + \mathcal{I}_{\mathcal B}$. 

\textit{Claim:} For $l$ large enough, the center of $\nu_*$ in $Y_l$ does not belong to the 
strict transform of any bad component.

\begin{proof}[Proof of the claim]
 We fix $l\geq 1$, and suppose that the center $z$ of $\nu_*$ in $Y_l$ belongs to 
the strict transform of some bad component. This means that 
$(\mathcal I_{\mathcal N}^l+\mathcal I_{\mathcal B})\cdot\mathcal{O}_{Y_l,z}=
\mathcal I_{\mathcal N}^l\cdot\mathcal{O}_{Y_l,z}$. 
Denote $\mathcal J_{\mathcal B}=\mathcal I_{\mathcal B}\cdot\mathcal{O}_{Y_l,z}$ and 
$\mathcal J_{\mathcal N}=\mathcal I_{\mathcal N}\cdot\mathcal{O}_{Y_l,z}$. This gives 
$\nu_*(\mathcal J_{\mathcal B})\geq l\:\nu_*(\mathcal J_{\mathcal N})>0$. If $\nu_*$ is a 
rank one valuation, then $l$ is bounded and this ends the proof. Suppose that $\nu_*$ has 
rank larger than one. Since $\nu'_*$ is not centered in $x\in X$, we have 
$\nu'_*(\mathcal J_{\mathcal B})=\nu'_*(\mathcal J_{\mathcal N})=0$. This implies 
that $\nu_*(\mathcal J_{\mathcal B})$ and $\nu_*(\mathcal J_{\mathcal N})$ belong to the 
convex subgroup of $\Phi_{\nu_*}$ where $\overline{\nu}_*$ takes its values. This subgroup 
has rank one, so we conclude again that $l$ is bounded.
\end{proof}

 Let us fix an $l$ for which the claim applies. Then we consider the complement 
$U$ in $Y_l$ of the union of the strict transforms of $E_{\mathcal B}$ and $D$. Let 
\[
\mathcal{U}=\set{\mu\in\RZ{(X,x)}\text{ such that the center of }\mu\text{ in }Y_l
\text{ belongs to }U}.
\]
This is an open set since $U$ is open, and it contains $\nu$ since the center of $\nu$ is included 
in the one of $\nu_*$ which belongs to $U$. In order to complete the proof, we now show that 
$\mathcal{U}$ is contained in $U_{>\alpha}$. 

 Take $\mu\in\mathcal U$ and denote by $z$ the center of $\mu_*$ in $Y_l$. The center 
of $\mu$ in $Y_l$ belongs to $U$, so that $z$ is also in $U$. Since the center in $X$ of $\mu_*$ 
is the point $x$, we deduce that either $z$ belongs to the strict transform of $E$ for some 
$E$ in $E_{\mathcal N}$ or $z$ belongs to the exceptional locus of $\phi_l$. Suppose 
that the first happens. Pick $g\in\mathcal{O}_{X,x}$ such that $(g)\cdot\mathcal{O}_{Y_l,z}=
\mathfrak{m}\cdot\mathcal{O}_{Y_l,z}$. Since $E$ is a nice component, $\nu_E(f^p/g^q)>0$ and 
$f^p/g^q$ belongs to $\mathfrak{m}_{Y_l,z}$ (note that $f^p/g^q$ is without indeterminacy). 
Therefore, $\pi(\mu)(f)>\alpha$ as required. If $z$ belongs to the exceptional locus of 
$\phi_l$, then $\cen(\mu_*)$ is $\phi_l(z)\in E_{\mathcal N}$ and we also have 
$\pi(\mu)(f)>\alpha$.
\end{proof}

Let $\pi_{X'}:X'\to X$ be a proper birational map which induces an isomorphism over  
$X\setminus\set{x}$. The map $\spe_{X'}:\text L(X,x)\to X'$ factors through $\NL{(X,x)}$, so 
we obtain an anticontinuous map from $\NL{(X,x)}$ to $X'$. We call again $\spe_{X'}$ this map 
and $\cen_{X'}$ the continuous map which associates to any point of $\RZ{(X,x)}$ its center 
in $X'$. 

\begin{remark}
 Let $\pi_{X'}:X'\to X$ be as in the previous paragraph. Observe that the equality 
$\cen_{X'}=\spe_{X'}\circ\pi$ does not hold in general. Indeed, consider the sequence 
$\nu_1,\ldots,\nu_d$ of valuations in $\RZ{(X,x)}$ built in the proof of Proposition~\ref{dimension}.
We keep the notations introduced there and denote $\nu_d$ by $\nu$. 
The valuation $\nu\in\RZ{(X,x)}$ has rank $d$, hence $\text{tr.deg}_k k_{\nu}=0$ (Abhyankar's 
inequality). As a consequence, the center of $\nu$ in any variety dominating $X$ by a 
proper birational map  is a closed point. However, the center of 
$\pi(\nu)=\frac{\nu_1}{\nu_1(\frak m_{X,x})}$ in $\widetilde X$ is the prime divisor $E_1$.
\end{remark}


\section{Regular case}\label{section2}

This section is devoted to the proof of Theorem~\ref{thmregular} (see Section~\ref{intro}). 
In the first subsection we show the equivalence between assertions $\textup{(2)}$ and $\textup{(3)}$ 
(Theorem~\ref{regNL}) and in the second one, the equivalence between $\textup{(1)}$ and $\textup{(3)}$ 
(Theorem~\ref{firstlemma}).

 For the rest of this section, $x\in X$ is a closed point at which $X$ is analytically 
irreducible. We denote by $R$ the local ring of $X$ at $x$ and by $\frak m$ its maximal ideal. 
By assumption the $\frak m$-adic completion $\widehat{R}$ of $R$ is an integral domain. We call 
$\widehat{\frak m}$ the maximal ideal of $\widehat{R}$ and $\widehat{K}$ its fraction field.

\subsection{Homemorphism type of NL\texorpdfstring{$\boldsymbol{(X,x)}$}{(X,x)}}

The key observation is the following:

\begin{proposition}\label{extsemi}
Let $\imath:R\hookrightarrow\widehat R$ be the natural inclusion. The map 
\[
\begin{array}{crll}
        \hat\imath: & \Val(\widehat R,\widehat{\frak m}) & \rightarrow & \NL{(X,x)} \\
        & \hat\nu & \mapsto & \hat\nu\circ\imath
\end{array}
\]
is a homeomorphism.
\end{proposition}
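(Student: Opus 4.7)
The plan is to verify that $\hat\imath$ is a continuous bijection between compact Hausdorff spaces, which automatically makes it a homeomorphism. Continuity is immediate from the definition of the topology of pointwise convergence: for any $f\in R$, the map $\hat\nu\mapsto\hat\nu(\imath(f))$ is one of the defining coordinate functions on $\Val(\widehat R,\widehat{\mathfrak m})$. For well-definedness, if $\mathfrak m=(x_1,\dots,x_s)$ then $\widehat{\mathfrak m}=(x_1,\dots,x_s)\widehat R$, so $\hat\nu(\widehat{\mathfrak m})=\min_i\hat\nu(\imath(x_i))=(\hat\nu\circ\imath)(\mathfrak m)$, showing that restriction preserves the normalization. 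Compactness of both spaces follows from Lemma~\ref{tychonoff}, applied in the source to the completion $\widehat R$, which is noetherian and, by the analytic irreducibility assumption, an integral domain containing $k$.

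The key tool will be the comparison of ideals $\widehat{\mathfrak m}^N\cap R=\mathfrak m^N$, which holds by faithful flatness of completion (or Artin--Rees). Consequently, every $\hat f\in\widehat R$ admits approximations $f_N\in R$ with $\hat f-f_N\in\widehat{\mathfrak m}^N$, and any two such approximations satisfy $f_N-f_M\in\mathfrak m^{\min(N,M)}$, whence $\nu(f_N-f_M)\geq\min(N,M)$ for every $\nu\in\Val(R,\mathfrak m)$; moreover $\hat\nu(\hat f-f_N)\geq N$ for every $\hat\nu\in\Val(\widehat R,\widehat{\mathfrak m})$, since any element of $\widehat{\mathfrak m}^N$ is a finite sum of $\widehat R$-multiples of $N$-fold products of elements of $\widehat{\mathfrak m}$. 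For injectivity, suppose $\hat\nu_1$ and $\hat\nu_2$ restrict to the same $\nu$ on $R$. By the strict form of the non-archimedean inequality, whenever $\hat\nu_i(\hat f)<N$ one has $\hat\nu_i(f_N)=\hat\nu_i(\hat f)$, and $\hat\nu_i(f_N)=\nu(f_N)$ since $f_N\in R$. Thus for $N$ large either $\nu(f_N)$ stabilizes at the common value $\hat\nu_i(\hat f)$, or $\hat\nu_i(\hat f)=+\infty$ and $\nu(f_N)\to+\infty$. In both cases $\hat\nu_i(\hat f)$ is determined by the sequence $\nu(f_N)$, so $\hat\nu_1=\hat\nu_2$.

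For surjectivity, given $\nu\in\Val(R,\mathfrak m)$ I would define $\hat\nu(\hat f):=\lim_N\nu(f_N)$ for any approximating sequence. The limit exists in $[0,+\infty]$: either $\nu(f_{N_0})<N_0$ for some $N_0$, in which case $\nu(f_N-f_{N_0})\geq N_0$ together with the ultrametric inequality forces $\nu(f_N)=\nu(f_{N_0})$ for all $N\geq N_0$; or $\nu(f_N)\geq N\to+\infty$. Independence of the approximation follows from the same estimate applied to $\nu(f_N-g_N)\geq N$. The semivaluation axioms on $\widehat R$ then follow by passing to the limit, using that $f_N+g_N$ and $f_Ng_N$ are approximations of $\hat f+\hat g$ and $\hat f\hat g$ respectively; for the product one expands $f_Ng_N-\hat f\hat g=(f_N-\hat f)g_N+\hat f(g_N-\hat g)$, which lies in $\widehat{\mathfrak m}^N$. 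The step requiring most care is the normalization $\hat\nu(\widehat{\mathfrak m})=1$: picking a generator $x_i$ of $\mathfrak m$ with $\nu(x_i)=1$ (which exists since $\nu(\mathfrak m)=\min_i\nu(x_i)=1$) gives $\hat\nu(x_i)=1$ via the constant approximation, and conversely writing any $\hat f\in\widehat{\mathfrak m}=(x_1,\dots,x_s)\widehat R$ as $\sum x_i\hat g_i$ and combining multiplicativity with the non-archimedean property yields $\hat\nu(\hat f)\geq 1$. A continuous bijection between compact Hausdorff spaces is a homeomorphism, which completes the proof.
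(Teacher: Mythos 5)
Your proposal is correct and follows the same overall strategy as the paper (a continuous bijection from a quasi-compact space to a Hausdorff space), with an injectivity argument that is essentially identical to the paper's uniqueness step: choosing approximations with $\hat f-f_N\in\widehat{\frak m}^N$, using $\hat\nu(\widehat{\frak m})=1$ to get $\hat\nu(\hat f-f_N)\geq N$, and invoking the equality case of the ultrametric inequality. Where you genuinely diverge is in the surjectivity step. The paper works with an arbitrary Cauchy sequence $(f_n)$ converging to $\hat f$ and proves that $\nu(f_n)$ eventually stabilizes (or tends to $+\infty$) by appealing to the structure of the value semigroup $\Gamma_\nu=\nu(R)\setminus\set{+\infty}$: it is well ordered because $R$ is noetherian, and it contains no infinite bounded sequence (quoting Cutkosky--Teissier), so one can locate the eventual value as the minimum of a suitable subset $\Lambda$. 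You instead restrict to the canonical approximations satisfying $\hat f-f_N\in\widehat{\frak m}^N$, note that $f_N-f_M\in\widehat{\frak m}^{\min(N,M)}\cap R=\frak m^{\min(N,M)}$, and run a direct dichotomy: either $\nu(f_{N_0})<N_0$ for some $N_0$, in which case $\nu(f_N-f_{N_0})\geq N_0>\nu(f_{N_0})$ forces $\nu(f_N)=\nu(f_{N_0})$ for all $N\geq N_0$, or $\nu(f_N)\geq N$ for all $N$ and the limit is $+\infty$. This is more elementary and self-contained (it avoids the semigroup-theoretic input entirely), at the cost of being tied to the specific approximating sequences; the paper's argument buys the stronger statement that $\nu(f_n)$ converges along every Cauchy sequence. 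Your explicit verification of the normalization $\hat\nu(\widehat{\frak m})=1$ and of the multiplicativity via the identity $f_Ng_N-\hat f\hat g=(f_N-\hat f)g_N+\hat f(g_N-\hat g)$ fills in details the paper leaves as ``straightforward to verify,'' and is sound.
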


\begin{proof}
The map $\hat\imath$ is a continuous mapping from a quasi-compact space into a 
Hausdorff space, so it is enough to prove that $\hat\imath$ is a bijection.

 Take $\nu\in\NL{(X,x)}$ and set $\Gamma_\nu=\nu(R)\setminus\set{+\infty}$. Consider a 
nonzero $f\in\widehat{R}$ and a Cauchy sequence $(f_n)_{n=1}^\infty$ in $R$ converging to $f$. 
If the sequence $(\nu(f_n))_{n=1}^\infty$ is not bounded above by an element of $\Gamma_\nu$, then 
we set $\hat\nu(f)=+\infty$. Now suppose that there exists an upper bound in $\Gamma_\nu$ for the 
sequence $(\nu(f_n))_{n=1}^\infty$. Consider the subset  
\[\Lambda=\set{\beta\in\Gamma_\nu\:/\:\forall\:n\:\exists\:n'>n\text{ such that }\nu(f_{n'})\leq\beta}.\] 
It is well known (see \cite[Appendix 3]{ZS}) that since $R$ in noetherian, $\Gamma_\nu$ is well 
ordered. By hypothesis $\Lambda$ is not empty, so we may consider the smallest element $\alpha$ of 
$\Lambda$. If $\alpha=0$ then we deduce that $\nu(f_n)=0$ for all $n$ large enough and we set 
$\hat\nu(f)=0$. Assume that $\alpha>0$. Since $\Gamma_\nu$ does not contain any infinite bounded 
sequence (see \cite[Lemma~3.1]{TeiSemi}), the set $\set{\beta\in\Gamma_\nu\:/\:\beta<\alpha}$ is finite. 
Let $\alpha'\in\Gamma_\nu$ be the immediate predecessor of $\alpha$. By definition of $\alpha$, the 
element $\alpha'$ does not belong to $\Lambda$. Hence $\nu(f_n)>\alpha'$ for all $n$ large enough, 
that is, $\nu(f_n)\geq\alpha$ for $n\gg0$. We deduce that $\nu(f_n)=\alpha$ for all $n$ large enough 
and we set $\hat\nu(f)=\alpha$.

 The definition of $\hat\nu$ does not depend on the choice of the Cauchy sequence. 
Moreover, if $(\nu(f_n))_{n=1}^\infty$ is not bounded then it tends to infinity. It is 
straightforward to verify that $\hat\nu(f)=\lim\limits_{n\to+\infty}\nu(f_n)$ defines a 
semivaluation of $\Val(\widehat R,\widehat{\frak m})$ such that $\hat\imath(\hat\nu)=\nu$. 
In order to end the proof we need to show the uniqueness of $\hat\nu$.

  Take $\hat{\mu}\in\Val(\widehat R,\widehat{\frak m})$ such that $\hat\imath(\hat\mu)=\nu$. 
For any $f\in\widehat R$ we can find a Cauchy sequence $(f_n)_{n=1}^\infty$ in $R$ 
converging to $f$ such that $f-f_n\in\widehat{\frak m}^n$ for any $n\geq1$. Since 
$\hat\mu(\widehat{\frak m})=1$, then for any $n$ we have the inequalities $\hat{\mu}(f-f_n)\geq n$ 
and 
\[\hat{\mu}(f)\geq\min\set{\hat{\mu}(f-f_n),\hat{\mu}(f_n)}\geq\min\set{n,\nu(f_n)}.\]
 If $\hat{\mu}(f)=\alpha\in\mathbf{R}$ then it follows that $\nu(f_n)=\alpha$ 
for every $n>\alpha$. Suppose now that $\hat{\mu}(f)=+\infty$. If the sequence 
$(\nu(f_n))_{n=1}^\infty$ is bounded above then there exists $\beta$ such that 
$\hat{\mu}(f-f_n)=\min\set{\hat{\mu}(f),\nu(f_n)}=\nu(f_n)\leq\beta$ for all $n\geq1$, 
which is a contradiction. Hence $\hat{\mu}(f)=\hat\nu(f)$ for all $f\in\widehat R$.
\end{proof}

We are now in position to prove:

\begin{theorem}\label{regNL}
 Let $X,Y$ be two algebraic varieties defined over the same algebraically closed 
field $k$. For all regular closed points $x\in X$, $y\in Y$, the spaces $\NL{(X,x)}$ and 
$\NL{(Y,y)}$ are homeomorphic if and only if $X$ and $Y$ have the same dimension.
\end{theorem}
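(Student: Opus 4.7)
The plan is to use the two earlier results of the paper as the two ingredients for the two directions of the equivalence. Write $d=\dim X$ and $e=\dim Y$.

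For the direction ($\Rightarrow$), I would invoke Proposition~\ref{dimNL}: the covering dimension of $\NL{(X,x)}$ equals $d-1$ and that of $\NL{(Y,y)}$ equals $e-1$. Since covering dimension is a topological invariant, a homeomorphism $\NL{(X,x)}\cong\NL{(Y,y)}$ forces $d-1=e-1$, hence $d=e$. Notice that this direction does not even need $x$ and $y$ to be regular.

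For the direction ($\Leftarrow$), the key point is Proposition~\ref{extsemi}, which identifies $\NL{(X,x)}$ with the space $\Val(\widehat R,\widehat{\mathfrak m})$ of normalized semivaluations of the $\mathfrak m$-adic completion. When $x$ is a regular closed point of a $d$-dimensional variety over the algebraically closed field $k$, the completion $\widehat{\mathcal O}_{X,x}$ is isomorphic as a $k$-algebra to the formal power series ring $k[[z_1,\ldots,z_d]]$, and the maximal ideal corresponds to $(z_1,\ldots,z_d)$. The same holds for $\widehat{\mathcal O}_{Y,y}$ with $d$ power series variables if $d=e$. Thus the pairs $(\widehat{\mathcal O}_{X,x},\widehat{\mathfrak m}_{X,x})$ and $(\widehat{\mathcal O}_{Y,y},\widehat{\mathfrak m}_{Y,y})$ are isomorphic, inducing a homeomorphism
\[
\NL{(X,x)}\;\cong\;\Val\bigl(k[[z_1,\ldots,z_d]],(z_1,\ldots,z_d)\bigr)\;\cong\;\NL{(Y,y)}.
\]

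Neither direction presents a real obstacle once Propositions~\ref{extsemi} and~\ref{dimNL} are in hand; the only subtlety is that the invariance of covering dimension under homeomorphism relies on standard dimension theory of topological spaces (cf.\ the reference to \cite{Pe} used in the proof of Proposition~\ref{dimNL}), and that the identification $\widehat{\mathcal O}_{X,x}\cong k[[z_1,\ldots,z_d]]$ for a regular closed point uses that $k$ is algebraically closed (hence equal to the residue field at $x$) together with Cohen's structure theorem. So the proof is essentially a direct combination of these two propositions, each supplying one implication of the three-way equivalence restricted to statements (2) and (3) of Theorem~\ref{thmregular}.
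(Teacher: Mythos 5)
Your proof is correct and is essentially the paper's own argument: Proposition~\ref{dimNL} (covering dimension $d-1$) gives the forward implication, and Proposition~\ref{extsemi} together with $\widehat{\mathcal O}_{X,x}\cong k[[z_1,\ldots,z_d]]$ for a regular point over an algebraically closed field gives the converse. Nothing to add.
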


\begin{proof}
It follows from Proposition~\ref{dimNL} that $X$ and $Y$ have the same dimension whenever 
$\NL{(X,x)}$ and $\NL{(Y,y)}$ are homeomorphic. Conversely, under our assumptions on 
the points $x$ and $y$, if $X$ and $Y$ have the same dimension then the formal completions 
of the local rings $\mathcal{O}_{X,x}$ and $\mathcal{O}_{Y,y}$ are isomorphic as $k$-algebras. 
Hence $\Val(\widehat{\mathcal{O}}_{X,x},\widehat{\frak m}_{X,x})$ and 
$\Val(\widehat{\mathcal{O}}_{Y,y},\widehat{\frak m}_{Y,y})$ are naturally homeomorphic. 
Proposition~\ref{extsemi} implies that $\NL{(X,x)}$ and $\NL{(Y,y)}$ are homeomorphic.
\end{proof}


\subsection{Homeomorphism type of RZ\texorpdfstring{$\boldsymbol{(X,x)}$}{(X,x)}}

 We now turn to the case of Riemann--Zariski spaces. One should note that, unlike the case 
of $\NL{(X,x)}$, extending a valuation of $\RZ{(X,x)}$ to a valuation of $\widehat K$ whose 
valuation ring dominates $\widehat R$ can not be done in general in an unique way. The approach 
which led to the proof of Theorem~\ref{regNL} seems difficult to carry out successfully in this 
setting. Instead, we focus on the extension of a valuation to the henselization 
$\widetilde R$ of $R$. We are assuming that $\widehat R$ is an integral domain, so $\widetilde R$ 
is also an integral domain. We call $\widetilde{K}$ the fraction field of $\widetilde R$.

 Since $R$ is a local noetherian domain which is excellent we are under the hypothesis 
of the following theorem: 

\begin{theorem}[\cite{HOST}, Theorem~7.1]\label{etaleext}
Let $A$ be a local noetherian excellent domain with fraction field $K$ and $\nu$ a valuation 
of $K$ whose valuation ring dominates $A$. Let $A^e$ be a local \'etale $A$-algebra contained 
in the henselization of $A$. There exists a unique prime ideal $H$ of $A^e$ such that $H\cap A=(0)$ 
and $\nu$ extends to a valuation $\nu^e$ of the fraction field $L$ of $A^e/H$ whose valuation 
ring dominates that ring. Furthermore, $\nu^e$ is the unique extension of $\nu$ to $L$ with this 
property, its group of values is the same as the one of $\nu$, and $H$ is a minimal prime of $A^e$.
\end{theorem}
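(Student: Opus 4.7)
The plan is to leverage the universal property of Hensel's lemma inside the henselization of the valuation ring $R_\nu$. The first key observation I would make is that the assumption $A^e\subseteq\widetilde A$ (where $\widetilde A$ is the henselization of $A$) forces the residue field of $A^e$ to coincide with $k:=A/\mathfrak{m}_A$: the local inclusion induces $A^e/\mathfrak{m}_{A^e}\hookrightarrow\widetilde A/\widetilde{\mathfrak{m}}=k$, and this is an injection between finite extensions of $k$, hence an equality. Writing $A^e$ in standard étale form $(A[T]/(f))_{\mathfrak{q}}$ with $f$ monic and $f'$ a unit in $A^e$, this forces the irreducible factor $\bar f_{\mathfrak{q}}$ of $\bar f$ over $k$ to be linear; after translating $T$, I may assume $\mathfrak{q}=(\mathfrak{m}_A,T)$ and $\bar f=T\cdot\bar g$ in $k[T]$ with $\bar g(0)\neq 0$.

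For existence, let $R_\nu^h$ denote the henselization of $R_\nu$, with fraction field $K_\nu^h$, residue field $k_\nu$, and the same value group as $\nu$. Since $T$ and $\bar g$ are coprime in $k_\nu[T]$, Hensel's lemma in $R_\nu^h$ yields a unique element $\alpha$ in its maximal ideal with $f(\alpha)=0$. The assignment $T\mapsto\alpha$ defines a local $A$-algebra homomorphism $\varphi:A^e\to R_\nu^h$; its kernel $H$ contracts to $(0)$ in $A$ because $A\hookrightarrow R_\nu^h$ is injective, and is therefore a minimal prime of $A^e$ by flatness and going-down. Extending $\nu$ uniquely to a valuation $\nu^h$ of $K_\nu^h$ (unique because $K_\nu^h/K$ is separable algebraic and $R_\nu^h$ is Henselian) and restricting via the embedding $L=\operatorname{Frac}(A^e/H)\hookrightarrow K_\nu^h$ induced by $\varphi$ yields a valuation $\nu^e$ on $L$ whose valuation ring dominates $A^e/H$ and whose value group equals that of $\nu^h$, hence that of $\nu$.

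For uniqueness, suppose $\mu$ is any extension of $\nu$ to $\operatorname{Frac}(A^e/H')$ for some prime $H'$ of $A^e$ with $H'\cap A=(0)$ and such that $R_\mu$ dominates $A^e/H'$. The composition $A^e\to A^e/H'\hookrightarrow R_\mu\hookrightarrow R_\mu^h$ sends $T$ to some $\tau\in R_\mu^h$ with $f(\tau)=0$, and the domination condition together with $T\in\mathfrak{q}$ forces the residue $\bar\tau=0$ in $k_\mu$. Since $\bar f=T\bar g$ has $0$ as its unique simple root of vanishing residue, Hensel's lemma applied in $R_\mu^h$ uniquely identifies $\tau$ as the image of $\alpha$, so the map $A^e\to R_\mu^h$ coincides with $\varphi$ composed with $R_\nu^h\hookrightarrow R_\mu^h$. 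This forces $H'=H$ and $\mu=\nu^e$. The main obstacle is tracking the compatibility across the several henselizations involved ($\widetilde A$, $R_\nu^h$, $R_\mu^h$) and extracting the linearity of $\bar f_{\mathfrak{q}}$ from the hypothesis $A^e\subseteq\widetilde A$, since it is precisely this linearity that prevents higher-degree factors of $f$ over $R_\nu^h$ from producing competing extensions satisfying the domination condition.
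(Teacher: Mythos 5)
The paper does not prove this statement: it is quoted verbatim as Theorem~7.1 of \cite{HOST} and used as a black box, so there is no internal proof to compare yours against. Judged on its own, your argument is essentially correct and follows what I take to be the standard route: reduce to a standard \'etale presentation $(A[T]/(f))_{\mathfrak q}$, use the hypothesis $A^e\subseteq\widetilde A$ to force the residual factor to be linear (this is indeed the decisive use of that hypothesis), and then run Hensel's lemma inside the henselization $R_\nu^h$ of the valuation ring, which is again a valuation ring with the same value group and residue field. Existence, the identification $H=\ker\varphi$, the minimality of $H$ (which really only needs that the generic fibre $A^e\otimes_AK$ is zero-dimensional), and the value-group statement all come out correctly.

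Three points deserve more care before this could stand as a complete proof. First, your parenthetical justification for the valuation $\nu^h$ on $K_\nu^h$ is backwards: uniqueness of extensions to algebraic extensions is a property \emph{of} henselian valued fields, not a reason the henselization carries a canonical valuation; what you actually need is the standard fact that $R_\nu^h$ is itself a valuation ring of $K_\nu^h$ with value group $\Gamma_\nu$, and you should cite or prove that. Second, in the uniqueness step you implicitly use that the canonical local map $R_\nu^h\to R_\mu^h$ is injective (otherwise the equality of the two maps $A^e\to R_\mu^h$ does not give $H'=\ker\varphi$); this holds because the kernel is a prime of the valuation ring $R_\nu^h$ contracting to $(0)$ in $R_\nu$ and the primes of $R_\nu^h$ are in bijection with those of $R_\nu$, but it should be said. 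Third, the final assertion $\mu=\nu^e$ needs a sentence: from $R_\mu=R_{\mu}^h\cap L$, $R_{\nu^e}=R_{\nu}^h\cap L$ and $R_{\mu}^h\cap K_\nu^h=R_{\nu}^h$ (the latter because a valuation ring of $K_\nu^h$ containing $R_\nu^h$ and whose maximal ideal contracts to $\mathfrak m_{\nu^h}$ must equal $R_\nu^h$) one gets $R_\mu=R_{\nu^e}$. None of these is a structural flaw; they are gaps of detail in an otherwise sound argument. It is also worth noting that your proof never uses noetherianity or excellence, which is consistent with the fact that those hypotheses in \cite{HOST} are needed for the passage to the completion rather than to the henselization.
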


 Take a valuation $\nu\in\RZ{(X,x)}$. Let us apply the previous result to $\nu$. 
Consider a local \'etale $R$-algebra $R^e$ contained in $\widetilde R$. Since $\widehat R$ is 
a domain, so is $R^e$ and we denote by $K^e$ its fraction field. Since the zero-ideal is 
the unique minimal prime of $R^e$, the last assertion of Theorem~\ref{etaleext} implies 
that $H=(0)$. We conclude that $\nu$ extends in a unique way to a valuation $\nu^e$ of 
$K^e$ whose valuation ring dominates $R^e$. Moreover, $\nu^e$ has the same value group 
as $\nu$.

 We denote by $\RZtilde(X,x)$ the set of all valuations of $\widetilde{K}$ 
whose valuation ring dominates $\widetilde R$, endowed with the topology induced by 
the Zariski topology.

\begin{proposition}\label{passageauhensel}
Let $\rho:K\hookrightarrow\widetilde K$ be the natural inclusion. The map 
\[
\begin{array}{crll}
        \tilde\rho: & \RZtilde(X,x) & \rightarrow & \RZ{(X,x)} \\
        & \widetilde\nu & \mapsto & \widetilde\nu\circ\rho
\end{array}
\]
is a homeomorphism.
\end{proposition}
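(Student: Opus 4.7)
The plan is to use Theorem~\ref{etaleext} to construct a set-theoretic inverse $\sigma$ of $\tilde\rho$, and then verify continuity on each side. I would write $\widetilde R = \varinjlim_e R^e$ as the filtered colimit over local \'etale $R$-algebras $R^e \subseteq \widetilde R$, so that $\widetilde K = \varinjlim_e K^e$ with $K^e = \mathrm{Frac}(R^e)$. For $\nu \in \RZ(X,x)$, Theorem~\ref{etaleext} applied to each $R^e$ (whose only minimal prime is $(0)$, since $R^e$ is a domain) yields a unique extension $\nu^e$ of $\nu$ to $K^e$ whose valuation ring dominates $R^e$. If $R^e \subseteq R^{e'}$, then $\nu^{e'}|_{K^e}$ is another extension of $\nu$ to $K^e$ dominating $R^e$, so uniqueness forces $\nu^{e'}|_{K^e} = \nu^e$. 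The compatible family $(\nu^e)_e$ glues to a valuation $\widetilde\nu$ on $\widetilde K$ whose valuation ring dominates $\widetilde R$; I set $\sigma(\nu) := \widetilde\nu$. The same uniqueness gives $\sigma \circ \tilde\rho = \mathrm{id}$: for any $\widetilde\mu \in \RZtilde(X,x)$, the restriction $\widetilde\mu|_{K^e}$ is forced to equal $\sigma(\widetilde\mu|_K)|_{K^e}$ on each layer $K^e$. Hence $\tilde\rho$ is bijective with inverse $\sigma$.

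Continuity of $\tilde\rho$ is immediate: for any finite $A \subset K \subseteq \widetilde K$ one has $\tilde\rho^{-1}(E(A) \cap \RZ(X,x)) = E(A) \cap \RZtilde(X,x)$, which is basic open.

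The core of the argument is continuity of $\sigma$. A basic open of $\RZtilde(X,x)$ has the form $E(g_1,\ldots,g_n) \cap \RZtilde(X,x)$ with all $g_i$ lying in a common $K^e$, so it suffices to show that for each local \'etale $R$-algebra $R^e$ the restriction map $r_e: \RZ(R^e,\mathfrak m^e) \to \RZ(X,x)$ is a homeomorphism, where $\RZ(R^e,\mathfrak m^e)$ denotes the set of valuations of $K^e$ dominating $R^e$ with its Zariski topology. Theorem~\ref{etaleext} already gives that $r_e$ is a continuous bijection, so the remaining point is openness. Here I would exploit the fact that $R^e$ is a finite $R$-algebra (being a direct factor of $R[t]/(p(t))$ for a suitable monic $p$), so $K^e/K$ is a finite separable extension generated by a primitive element $\alpha$ with $p'(\alpha)$ a unit in $R^e$. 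Writing any $g \in K^e$ as a polynomial in $\alpha$ with coefficients in $K$, and using the uniqueness clause of Theorem~\ref{etaleext} (which rigidly determines the values $\nu^e$ takes on $\alpha$ and on $p'(\alpha)$ in terms of the \'etale presentation), one should be able to translate the condition $\nu^e(g) \geq 0$, locally around any fixed $\nu_0$, into a finite system of inequalities on elements of $K$, yielding an explicit basic open of $\RZ(X,x)$ around $\nu_0$ contained in $r_e(E(g)\cap \RZ(R^e,\mathfrak m^e))$. This transfer step from $K^e$ down to $K$ is the main obstacle: it is precisely where the interaction between the finite \'etale extension and the Zariski topology has to be analyzed, and where the uniqueness clause of Theorem~\ref{etaleext} is essential to pin $\nu^e$ down as a Zariski-continuous function of $\nu$.
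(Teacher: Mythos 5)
Your construction of the inverse via Theorem~\ref{etaleext} and the filtered colimit of local \'etale $R$-algebras, and your verification that $\tilde\rho$ is continuous, match the paper exactly. The problem is the last step, which you yourself flag as ``the main obstacle'': you never actually prove that $\tilde\rho$ is open (equivalently, that $\sigma$ is continuous), you only express the hope that writing $g\in K^e$ as a polynomial in a primitive element $\alpha$ will let you translate $\nu^e(g)\geq 0$ into inequalities on elements of $K$. That strategy runs into a real obstruction: for a sum $g=\sum_i c_i\alpha^i$ one only has $\nu^e(g)\geq\min_i\nu^e(c_i\alpha^i)$, and the inequality can be strict, so the condition $\nu^e(g)\geq 0$ is \emph{not} determined by the values of the $c_i$ and of $\alpha$. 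There is no evident ``finite system of inequalities on elements of $K$'' that cuts out $\tilde\rho(E(g))$ along these lines, and the uniqueness clause of Theorem~\ref{etaleext} does not by itself resolve the cancellation issue. As it stands the proof is incomplete at its central point.

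The paper's argument for openness is different and worth recording. Given $f\in\widetilde K$, let $p(t)=t^n+a_{n-1}t^{n-1}+\cdots+a_0\in K[t]$ be its minimal polynomial over $K$ (of $f$ itself, not of a primitive element of $K^e$). One shows $\tilde\rho(E(f))=V$ where $V=\set{\nu\in\RZ{(X,x)}\:/\:a_0,\ldots,a_{n-1}\in R_\nu}$ is a basic open set. The inclusion $V\subseteq\tilde\rho(E(f))$ follows from integrality: if all $a_i\in R_\nu\subseteq R_{\widetilde\nu}$ then $f$, being integral over $R[a_0,\ldots,a_{n-1}]$, lies in $R_{\widetilde\nu}$. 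For the reverse inclusion one passes to the splitting field $L$ of $p$, picks an extension $\overline\mu$ of $\tilde\rho(\mu)$ to $L$, and uses that every extension to $L$ is of the form $\overline\mu\circ\sigma$ with $\sigma\in\mathrm{Gal}(L|K)$; the uniqueness of the extension to $\widetilde K$ (hence to $K(f)$) forces $\overline\mu(\sigma(f))=\mu(f)\geq 0$ for every $\sigma$, so all roots of $p$ lie in $R_{\overline\mu}$ and the coefficients, being symmetric functions of the roots, lie in $R_\mu$. This global identification of the image of a basic open set with a basic open set is what your local-transfer sketch is missing.
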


\begin{proof}
The map $\tilde\rho$ is clearly a continuous map because the valuation ring of 
$\tilde\rho(\widetilde\nu)$ equals $K \cap R_{\widetilde\nu}$ for any 
$\widetilde\nu\in\RZtilde(X,x)$. Let us now show that $\tilde\rho$ is bijective.

 Given $\nu\in\RZ{(X,x)}$, there exists a unique $\widetilde\nu\in\RZtilde(X,x)$ 
such that $\tilde\rho(\widetilde\nu)=\nu$. To see this, take a nonzero $f\in\widetilde R$. 
Since $\widetilde R$ is the inductive limit of the system of equiresidual local \'etale 
$R$-algebras, there exists such a $R$-algebra, say $R^e$, such that $f\in R^e$. We define 
$\widetilde\nu(f)=\nu^e(f)$, where $\nu^e$ is the valuation of the fraction field of $R^e$ whose 
existence Theorem~\ref{etaleext} guarantees. Since $R^e$ is a localization of a finite $R$-algebra, 
$R^e$ is exce\-llent (excellence is preserved by localization and any finitely generated algebra 
over an excellent ring is excellent). If $R^e\hookrightarrow R^{e'}$ then we deduce that 
$\nu^{e'}(g)=\nu^e(g)$ for all $g$ in $R^e$. Therefore $\widetilde\nu$ is well defined and gives 
rise to a valuation of $\RZtilde(X,x)$. The uniqueness of $\widetilde\nu$ follows directly from 
Theorem~\ref{etaleext}.

 In order to complete the proof we need to show that $\tilde\rho$ is an open map. One only 
needs to check that $\tilde\rho(E(f))$ is open in $\RZ{(X,x)}$ for every $f\in\widetilde{K}$. 
Recall that $E(f)$ is the set of all $\mu\in\RZtilde(X,x)$ such that $f\in R_\mu$.

 Pick an element $f\in\widetilde{K}$. Since $K\hookrightarrow \widetilde{K}$ is an 
algebraic field extension, we can consider the minimal polynomial 
$p(t)=t^n+a_{n-1}t^{n-1}+\ldots+a_0\in K[t]$ of $f$. The set $V$ of all valuations 
$\nu\in\RZ{(X,x)}$ such that $a_{i}\in R_\nu$ for all $i\in\set{0,\ldots,n-1}$ is contained 
in $\tilde\rho(E(f))$. Indeed, given $\nu\in V$, if $\widetilde\nu$ is its extension to 
$\RZtilde(X,x)$ then we have the inclusions $R[a_0,\ldots,a_{n-1}]\subseteq R_\nu
\subseteq R_{\widetilde\nu}$. Since $f$ is integral over $R[a_0,\ldots,a_{n-1}]$ this yields 
$f\in R_{\widetilde\nu}$, that is, $\nu\in\tilde\rho(E(f))$. Conversely, 
$\tilde\rho(E(f))\subseteq V$. To see this, let us take $\mu\in\RZtilde(X,x)$ such that 
$f\in R_\mu$ and show that $\tilde\rho(\mu)\in V$. We need to prove that $a_{i}\in R_\mu$ 
for all $i\in\set{0,\ldots,n-1}$.

 Let $L=K(f,\alpha_1,\ldots,\alpha_{n-1})$ be the splitting field of $p(t)$ and 
$\overline{\mu}$ an extension of $\tilde\rho(\mu)$ to $L$. The coefficients of $p(t)$ are 
symmetric polynomials functions of the roots $f,\alpha_1,\ldots,\alpha_{n-1}$, therefore to 
conclude that $a_i\in R_\mu$ for any $i\in\set{0,\ldots,n-1}$ is sufficient to verify 
that $\alpha_j\in R_{\overline{\mu}}$ for all $j\in\set{1,\ldots,n-1}$.

 According to \cite[Ch.~VI \S 7, Corollary~3]{ZS} every extension of $\mu$ to the field $L$ 
can be written as $\overline{\mu}\circ\sigma$ for $\sigma$ in the Galois group $\hbox{Gal}(L|K)$. 
The uniqueness of the extension to $\widetilde{K}$ implies the uniqueness of the extension to the 
subfield $K(f)$, so $\overline{\mu}(\sigma(f))=\mu(f)$ for every $\sigma\in\hbox{Gal}(L|K)$ and 
$\overline{\mu}(\alpha_j)\geq0$ for all $j\in\set{1,\ldots,n}$.
\end{proof}

We now prove the second main result of this section.

\begin{theorem}\label{firstlemma}
Let $X,Y$ be two algebraic varieties defined over the same algebraically closed field $k$. 
For all $x\in X$, $y\in Y$ regular closed points, the spaces $\RZ{(X,x)}$ and $\RZ{(Y,y)}$ 
are homeomorphic if and only if $X$ and $Y$ have the same dimension.
\end{theorem}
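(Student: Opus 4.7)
The forward direction is immediate: Krull dimension is a topological invariant, and Proposition~\ref{dimension} identifies $\dim \RZ{(X,x)}$ with $\dim X - 1$, so any homeomorphism $\RZ{(X,x)} \cong \RZ{(Y,y)}$ forces $\dim X = \dim Y$.

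For the converse, set $d = \dim X = \dim Y$. By Proposition~\ref{passageauhensel} it suffices to construct a homeomorphism $\RZtilde(X,x) \cong \RZtilde(Y,y)$, and my plan is to produce a $k$-algebra isomorphism between the henselizations $\widetilde{\mathcal O}_{X,x}$ and $\widetilde{\mathcal O}_{Y,y}$: any such isomorphism extends to an isomorphism of fraction fields, transports valuation rings dominating one henselization to those dominating the other, and preserves the basic Zariski opens $E(f_1,\ldots,f_m)$, hence descends to the desired homeomorphism. To exhibit the ring isomorphism I will invoke the \'etale-local structure of smooth varieties: since $k$ is algebraically closed and $x \in X$ is a regular closed point, the residue field at $x$ is $k$, and any regular system of parameters $z_1,\ldots,z_d \in \mathfrak m_{X,x}$ defines, after shrinking $X$ to a suitable affine open neighborhood of $x$, a morphism $\varphi_X : X \to \mathbf A^d_k$ sending $x$ to the origin which is \'etale in a Zariski neighborhood of $x$ (unramified because the $z_i$ form a basis of the cotangent space at $x$, and flat since source and target are regular of the same dimension).

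Applying the fact that an \'etale morphism between local rings sharing a common residue field induces an isomorphism of their henselizations, one concludes that $\widetilde{\mathcal O}_{X,x}$ is $k$-algebra isomorphic to the henselization of $k[z_1,\ldots,z_d]_{(z_1,\ldots,z_d)}$, and the same identification applies to $\widetilde{\mathcal O}_{Y,y}$; composing the two isomorphisms finishes the proof. The main conceptual point is not the individual steps---which are classical once Proposition~\ref{passageauhensel} is in place---but rather the strategic choice of working with the henselization: the non-uniqueness of extensions of valuations from $R$ to its completion $\widehat R$, noted at the start of this subsection, is precisely what passing through $\widetilde R$ is designed to avoid.
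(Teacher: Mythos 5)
Your proof follows essentially the same route as the paper: the forward direction via Proposition~\ref{dimension}, and the converse by reducing via Proposition~\ref{passageauhensel} to producing a $k$-algebra isomorphism between the henselizations of $\mathcal O_{X,x}$ and $\mathcal O_{Y,y}$. Where the paper simply asserts that these henselizations are isomorphic, you usefully supply the standard justification (a regular system of parameters yields an \'etale morphism to $\mathbf A^d_k$, and \'etale maps with equal residue fields identify henselizations), though the flatness step relies on miracle flatness---regularity of source and target alone does not give flatness without also noting that the fiber over the origin is zero-dimensional.
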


\begin{proof}
If $\RZ{(X,x)}$ and $\RZ{(Y,y)}$ are homeomorphic then it follows by Proposition~\ref{dimension} 
that $X$ and $Y$ have the same dimension. Let us now prove the converse.

 Under our assumptions on the points $x$ and $y$, if $X$ and $Y$ have the same dimension 
then the henselizations of $\mathcal O_{X,x}$ and $\mathcal O_{Y,y}$ are isomorphic as 
$k$-algebras. We have then a natural homeomorphism between $\RZtilde(X,x)$ and $\RZtilde(Y,y)$. 
To end the proof it suffices to apply Proposition~\ref{passageauhensel}.
\end{proof}

 When we can freely make use of the existence of resolutions of singularities, 
Theorem~\ref{firstlemma} allows us to show that, in the regular case, the Riemann--Zariski 
space of $X$ at $x$ has a property with the flavour of self-similarity in fractals. More 
precisely, following \cite{selfhomeo} we say that a topological space $Z$ is a 
\emph{self-homeomorphic space}  if for any non empty open subset $U\subseteq Z$ there is 
a subset $V\subseteq U$ such that $V$ is homeomorphic to $Z$.

\begin{corollary}\label{RZisautosim}
Let $X$ be an algebraic variety defined over an algebraically closed field $k$ of characteristic 
zero. If $x\in X$ is a regular closed point, then $\RZ{(X,x)}$ is self-homeomorphic. 
\end{corollary}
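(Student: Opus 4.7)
The approach is to combine Theorem~\ref{firstlemma} with Zariski's realization of $\RZ(X)$ as the inverse limit over proper birational models and with Hironaka's resolution of singularities, which is available in characteristic zero.

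I would fix a non-empty open $U\subseteq\RZ(X,x)$ and choose any $\nu\in U$. By Zariski's theorem recalled above, a basis of open neighborhoods of $\nu$ in $\RZ(X,x)$ consists of the sets $\cen_{X''}^{-1}(W'')\cap\RZ(X,x)$, where $\pi'':X''\to X$ ranges over proper birational models of $X$ and $W''$ over open neighborhoods of $\cen_{X''}(\nu)$ in $X''$. I would pick one such pair $(X'',W'')$ with $\cen_{X''}^{-1}(W'')\cap\RZ(X,x)\subseteq U$ and then invoke Hironaka to obtain a proper birational $\sigma:X'\to X''$ with $X'$ smooth. Setting $\pi=\pi''\circ\sigma$ and $W=\sigma^{-1}(W'')$, I get a smooth model $\pi:X'\to X$ together with an open $W\subseteq X'$ containing $\cen_{X'}(\nu)$ such that $\cen_{X'}^{-1}(W)\cap\RZ(X,x)\subseteq U$.

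The key step is then to produce a closed point $x'\in W\cap\pi^{-1}(x)$. For this I would consider the Zariski closure $Z=\overline{\{\cen_{X'}(\nu)\}}$ in $X'$. Since $x$ is closed in $X$ and $\pi(\cen_{X'}(\nu))=x$, one has $Z\subseteq\pi^{-1}(x)$. The intersection $Z\cap W$ is non-empty and open in $Z$, and closed points are dense in $Z$ because $Z$ is of finite type over $k$. Hence any closed point $x'$ of $Z$ that lies in $Z\cap W$ is automatically a closed point of $X'$ and satisfies $\pi(x')=x$.

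Finally, I would set $V:=\RZ(X',x')$, viewed as a subset of $\RZ(X)=\RZ(X')$. Every element of $V$ has center $x'$ in $X'$, hence center $x=\pi(x')$ in $X$, and $x'\in W$ forces $V\subseteq\cen_{X'}^{-1}(W)\cap\RZ(X,x)\subseteq U$. Since $X'$ is smooth at $x'$ and $\dim X'=d$, Theorem~\ref{firstlemma} yields a homeomorphism $V\cong\RZ(X,x)$, as required. The only mildly delicate point in this plan is the production of the closed point $x'\in W$ over $x$, and it is handled by the scheme-theoretic argument with $Z$; everything else is a formal combination of Zariski's theorem, Hironaka resolution, and Theorem~\ref{firstlemma}.
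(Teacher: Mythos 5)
Your argument is correct and follows the paper's own strategy: produce a smooth proper birational model $\pi:X'\to X$ together with a regular closed point $x'$ over $x$ such that $\RZ{(X',x')}$ sits inside the given open set, and then conclude with Theorem~\ref{firstlemma}. The only difference is presentational: the paper first reduces to $\RZ{(\mathbf{A}_k^d,0)}$ and builds $X'$ by an explicit blow-up that makes the rational functions defining a basic open set regular, whereas you extract the model and the neighborhood from Zariski's inverse-limit description of the topology and find $x'$ as a closed point of $\overline{\{\cen_{X'}(\nu)\}}$; both routes are valid and essentially equivalent.
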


\begin{proof}
Suppose that $X$ has dimension $d>1$ (otherwise the result is clear). Theorem~\ref{firstlemma} 
implies that $\RZ{(X,x)}$ is homeomorphic to the Riemann--Zariski space of the $d$-dimensional 
affine space over $k$ at the origin. Therefore it suffices to show that 
$Z=\RZ{(\mathbf A_k^d,0)}$ is self-homeomorphic.

 To see this, take an open subset $U$ of $Z$. Without loss of generality we may assume 
that $U$ is a basic open subset, that is, $U=\set{\nu\in Z\:/\:f_1/g_1,\ldots,f_m/g_m\in R_\nu}$ 
where $f_i$ and $g_i$ are polynomials in $k[x_1,\ldots,x_d]$ and $g_i\neq0$ for all $i=1,\ldots,m$. 
We need to show that there exists $V\subseteq U$ homeomorphic to $Z$. Let $\psi:Y\rightarrow X$ 
be the blowing-up of $\mathbf A_k^d$ with respect to the ideal 
$(x_1,\ldots,x_d)\cdot\prod_{1\leq i\leq m}(f_i,g_i)$ of $k[x_1,\ldots,x_d]$. Pick a resolution 
of singularities $\pi':X'\to Y$ and denote $\pi=\psi\circ\pi'$. We choose a closed point 
$x'\in\pi^{-1}(0)$ in an affine chart $W\subseteq X'$ such that $f_i/g_i\in\mathcal O_{X'}(W)$ 
for all $i=1,\ldots,m$. By construction $\RZ{(X',x')}\subseteq U$. Since $x'$ is regular, using 
again Theorem~\ref{firstlemma} we see that $\RZ{(X',x')}$ is homeomorphic to $Z$. Hence it suffices 
to take $V=\RZ{(X',x')}$ to complete the proof. 
\end{proof}


\section{Graphic tools}\label{section3}

 This section provides a short introduction to trees and graphs. They are both important 
tools in the treatment of the two-dimensional normal case.

\subsection{Trees}\label{subsectrees}

 For us a tree is a rooted non-metric $\mathbf{R}$-tree in the sense of \cite{VT}. We 
refer for details to Sections 3.1 and 7.2 in \textit{loc. cit.} The fact that the last 
condition in the following definition must be explicitly stated was remarked in 
\cite[Definition 3.1]{No}.

 A \emph{tree} is a pair $(\mathcal{T},\leq)$ consisting of a set $\mathcal{T}$ and a partial 
order $\leq$ on $\mathcal{T}$ such that,

\begin{enumerate}[$\bullet$]
\item there exists a unique smallest element $\tau_0$ in $\mathcal{T}$ (called the \emph{root} 
of $\mathcal{T}$);
\item if $\tau\in\mathcal{T}$, then $\set{\sigma\in\mathcal{T}\:/\:\sigma\leq\tau}$ is 
isomorphic (as ordered set) to a real interval;
\item every totally ordered convex subset of $\mathcal{T}$ is isomorphic (as ordered set) 
to a real interval;
\item every non-empty subset of $\mathcal{T}$ admits an infimum in $\mathcal{T}$;
\end{enumerate} 

 In what follows, if $(\mathcal{T},\leq)$ is a tree then $\mathcal T$ is assumed to be equipped 
with the \emph{weak tree topology}, described as follows. Given two elements $\tau,\tau'$ in $\mathcal{T}$, 
we denote by $\tau\wedge\tau'$ the infimum of $\set{\tau,\tau'}$ and we call the subset 
\[[\tau,\tau']=\set{\sigma\in\mathcal{T}\:/\:\tau\wedge\tau'\leq\sigma\leq\tau}
\cup\set{\sigma\in\mathcal{T}\:/\:\tau\wedge\tau'\leq\sigma\leq\tau'}\] 
a \emph{segment}. If $\tau\in\mathcal{T}$, we define an equivalence relation on the set 
$\mathcal{T}\setminus\set{\tau}$ by setting $\tau'\equiv_\tau\tau''$ if and only if 
$[\tau,\tau']\cap[\tau,\tau'']\neq\set{\tau}$. The equivalence classes are called the 
\emph{tangent vectors} at $\tau$ and each of them determines an open subset of $\mathcal{T}$, 
$U_\tau(\tau')=\set{\sigma\in\mathcal{T}\setminus\set{\tau}\::\:\sigma\equiv_\tau\tau'}$. 
The weak tree topology is the topology generated by all these subsets when $\tau$ ranges 
over $\mathcal{T}$. Thus an open subset of $\mathcal{T}$ is a union of finite intersections 
of subsets of the form $U_\tau(\tau')$. 

 We shall use several times the following lemma whose proof is a direct verification 
from the definitions.

\begin{lemma}\label{cambioraiz}
Let $(\mathcal{T},\leq)$ be a tree with root $\tau_0$ and let $\tau_0'\in\mathcal T$. 
Given $\tau,\tau'\in\mathcal T$, define $\tau\leq'\tau'$ if and only if 
$[\tau_0',\tau]\subseteq[\tau_0',\tau']$. Then $(\mathcal{T},\leq')$ is a tree with root 
$\tau_0'$ and segments in $(\mathcal{T},\leq)$ coincide with segments in $(\mathcal{T},\leq')$. 
In particular the weak tree topologies on $\mathcal T$ induced by $\leq$ and $\leq'$ coincide.
\end{lemma}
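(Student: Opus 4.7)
The plan is to verify, in order, that $\leq'$ is a partial order, that the four tree axioms hold for $(\mathcal{T},\leq')$ with root $\tau_0'$, that segments agree, and finally deduce the topological statement as a formality. The key preliminary identification is
\[
\set{\sigma\in\mathcal{T}\:/\:\sigma\leq'\tau}=[\tau_0',\tau],
\]
where the right-hand side is the segment for the original order. The inclusion $\supseteq$ requires a brief case check: for $\sigma\in[\tau_0',\tau]$ one computes $\tau_0'\wedge\sigma$ in each of the two subcases ($\sigma\in[\tau_0'\wedge\tau,\tau_0']$, whence $\tau_0'\wedge\sigma=\sigma$; or $\sigma\in[\tau_0'\wedge\tau,\tau]$, whence $\tau_0'\wedge\sigma=\tau_0'\wedge\tau$) and verifies $[\tau_0',\sigma]\subseteq[\tau_0',\tau]$ directly from the defining formula. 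The reverse inclusion is trivial since $\sigma\in[\tau_0',\sigma]$.

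With this identification at hand, the tree axioms become routine. Antisymmetry and transitivity of $\leq'$ are immediate, once one notes that $\tau$ is recovered from $[\tau_0',\tau]$ as its unique maximal element for $\leq'$. The existence of the smallest element $\tau_0'$ follows because $[\tau_0',\tau_0']=\set{\tau_0'}$ lies in every $[\tau_0',\tau]$. For the second axiom, the restriction of $\leq'$ to $[\tau_0',\tau]$ reverses $\leq$ on the downward part $[\tau_0'\wedge\tau,\tau_0']$ and preserves it on the upward part $[\tau_0'\wedge\tau,\tau]$, and the two pieces glue at $\tau_0'\wedge\tau$ into a total order isomorphic to a real interval (each piece already being one, by axiom (3) for $\leq$). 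The third axiom follows by the same kind of reduction, applied to any totally ordered convex subset for $\leq'$. For the fourth axiom, given a non-empty $S\subseteq\mathcal T$, one shows that the intersection $\bigcap_{\sigma\in S}[\tau_0',\sigma]$ is again of the form $[\tau_0',\rho]$, using the nested structure of these ``paths from $\tau_0'$'' together with axiom (4) of $\leq$ applied to appropriate chains, and this $\rho$ equals $\inf_{\leq'} S$.

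For the equality of segments, identify $\tau\wedge'\tau'$ via its $\leq'$-lower set: it is the unique element with $[\tau_0',\tau\wedge'\tau']=[\tau_0',\tau]\cap[\tau_0',\tau']$. A tripod analysis of $\set{\tau_0',\tau,\tau'}$, exploiting the standard fact that among the three pairwise meets of three points in a tree two coincide with the threefold meet, shows that $\tau\wedge'\tau'$ is the median of $\tau_0',\tau,\tau'$. Expanding both $[\tau,\tau']_\leq$ and $[\tau,\tau']_{\leq'}$ from the defining formula and comparing (a short case analysis depending on which pairwise meet is the threefold meet) yields that they coincide as subsets of $\mathcal{T}$. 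The topological statement is then formal: the subbase $\set{U_\tau(\tau')}$ of the weak tree topology is defined purely in terms of segments and their intersections at a given point, so equal segments force equal topologies. The main technical point, as I see it, is the tripod case analysis underlying the computation of $\tau\wedge'\tau'$ and the verification of axiom (4); both are elementary but require some care with the geometry of rooted $\mathbf R$-trees.
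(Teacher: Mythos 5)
The paper declares this lemma a ``direct verification from the definitions'' and supplies no written proof, so there is no alternate route to compare against; your sketch is exactly that direct verification and is correct. The organizing observation that $\{\sigma : \sigma\leq'\tau\}=[\tau_0',\tau]$, the description of $\leq'$ on $[\tau_0',\tau]$ as $\leq$ reversed on $[\tau_0'\wedge\tau,\tau_0']$ and preserved on $[\tau_0'\wedge\tau,\tau]$, the identification of $\tau\wedge'\tau'$ as the median of $\{\tau_0',\tau,\tau'\}$, and the reduction of the topological claim to equality of segments are all the right moves; the only places that genuinely need the original axiom (4) are, as you note, the verification of $\leq'$-infima and (slightly hidden in your phrase ``the same kind of reduction'') axiom (3) when the $\leq'$-chain is unbounded, where one must split according to whether the meets $\tau_0'\wedge\tau$ stabilize along the chain.
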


 Given two different points $\tau,\tau'$ of a tree $(\mathcal{T},\leq)$, for any 
$\sigma\in[\tau,\tau']\setminus\set{\tau,\tau'}$, $U_\sigma(\tau)$ and $U_\sigma(\tau')$ are 
disjoint open neighborhoods of $\tau$ and $\tau'$, thus $\mathcal{T}$ is Hausdorff. Furthermore, 
the segment $[\tau,\tau']$ endowed with the topology induced from that of $\mathcal{T}$ is 
homeomorphic to $[0,1]$ endowed with the induced topology from that of $\mathbf{R}$. Therefore 
any tree is arcwise connected. Moreover it is \emph{uniquely arcwise connected}: 

\begin{lemma}\label{uniqarc}
Let $(\mathcal{T},\leq)$ be a tree and let $\tau$ and $\tau'$ be two different points of 
$\mathcal{T}$. The image of any injective continuous mapping $\gamma:[0,1]\rightarrow\mathcal{T}$ 
with $\gamma(0)=\tau$ and $\gamma(1)=\tau'$ is the segment $[\tau,\tau']$.
\end{lemma}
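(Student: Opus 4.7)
The plan is to prove the two reverse inclusions $[\tau,\tau']\subseteq\gamma([0,1])$ and $\gamma([0,1])\subseteq[\tau,\tau']$ separately, exploiting throughout the fact that removing a point from $\mathcal T$ splits it into a disjoint union of open tangent sectors.

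For the first inclusion, I would fix an arbitrary $\sigma\in[\tau,\tau']\setminus\set{\tau,\tau'}$ and observe that, since $\sigma$ is an interior point of the segment joining $\tau$ and $\tau'$, one has $[\sigma,\tau]\cap[\sigma,\tau']=\set{\sigma}$, so $\tau$ and $\tau'$ represent distinct tangent vectors at $\sigma$. The sets $U_\sigma(\sigma')$, as $\sigma'$ runs through tangent vectors at $\sigma$, are pairwise disjoint and together cover $\mathcal T\setminus\set{\sigma}$. If $\sigma$ did not lie in $\gamma([0,1])$, the connected set $\gamma([0,1])$ would be contained in $\mathcal T\setminus\set{\sigma}$ while meeting the two disjoint open sets $U_\sigma(\tau)$ and $U_\sigma(\tau')$, contradicting connectedness.

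For the second inclusion, suppose $\sigma:=\gamma(t_0)\notin[\tau,\tau']$ for some $t_0\in(0,1)$. Applying the first inclusion to the continuous injections $\gamma|_{[0,t_0]}$ and $\gamma|_{[t_0,1]}$ (the argument above works for any closed real interval) yields $[\tau,\sigma]\subseteq\gamma([0,t_0])$ and $[\sigma,\tau']\subseteq\gamma([t_0,1])$. By Lemma~\ref{cambioraiz} I may reroot $\mathcal T$ at $\tau$ without affecting segments or the weak tree topology; in this new order $[\tau,\tau']=\set{x\in\mathcal T\:/\:x\leq\tau'}$, so $\sigma\notin[\tau,\tau']$ means $\sigma\not\leq\tau'$, and hence the infimum $\rho:=\sigma\wedge\tau'$ satisfies $\rho<\sigma$ together with $\rho\leq\tau'$. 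A direct check using the definition of segments then shows $\rho\in[\tau,\sigma]\cap[\sigma,\tau']$ with $\rho\neq\sigma$, so $\rho$ has preimages under $\gamma$ in both $[0,t_0]$ and $[t_0,1]$; injectivity forces both preimages to equal $t_0$, yielding $\rho=\sigma$, a contradiction.

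The first inclusion is essentially a reformulation of connectedness in the presence of the sector decomposition at an interior point. I expect the second inclusion to be the main obstacle: one has to bootstrap from the first inclusion applied to both halves of $\gamma$ and then use the existence of infima (the defining tree axiom) to manufacture a branch point $\rho$ that lies in two disjoint subintervals of $[0,1]$, violating injectivity. The rerooting lemma is the technical device that lets one phrase this branching statement cleanly without keeping track of meets relative to the original root.
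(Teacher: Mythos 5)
Your proof is correct, and it takes a genuinely different route from the paper's. The paper first observes that the reverse inclusion $\gamma([0,1])\subseteq[\tau,\tau']$ follows automatically once $[\tau,\tau']\subseteq\gamma([0,1])$ is known: since $\gamma$ is a homeomorphism onto its image, the preimage of $[\tau,\tau']$ is then a connected subset of $[0,1]$ containing $0$ and $1$, hence all of $[0,1]$. It then proves $[\tau,\tau']\subseteq\gamma([0,1])$ after rerooting at $\tau$, by invoking \cite[Corollary~7.9]{VT} to get continuity of $t\mapsto\gamma(t)\wedge\tau'$ and applying the Intermediate Value Theorem together with a minimality argument on $\inf\{t:\gamma(t)\wedge\tau'=\sigma\}$. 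You instead establish the forward inclusion by a self-contained connectedness argument: at an interior point $\sigma$ of $[\tau,\tau']$, the tangent sectors $U_\sigma(\cdot)$ partition $\mathcal T\setminus\set{\sigma}$ into disjoint open sets, and $\tau,\tau'$ lie in distinct sectors, so $\gamma([0,1])$ must pass through $\sigma$. For the reverse inclusion you bootstrap the forward inclusion onto the two halves of $\gamma$ and manufacture a branch point $\rho=\sigma\wedge\tau'$ that would need preimages in both $[0,t_0]$ and $[t_0,1]$, contradicting injectivity. Your argument avoids the external citation and is more elementary, at the cost of proving both inclusions; the paper's initial reduction is slicker but less self-contained. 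Both rely on the rerooting lemma in the same way. The one point worth making fully explicit is that $[\sigma,\tau]\cap[\sigma,\tau']=\set{\sigma}$ for interior $\sigma$, which follows by rerooting at $\sigma$ and noting that $\sigma\in[\tau,\tau']$ forces $\tau\wedge\tau'=\sigma$ in the rerooted order; you assert this without detail, but it is routine.
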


\begin{proof}
Consider $\gamma([0,1])$ equipped with the induced topology from that of $\mathcal{T}$. Since 
$[0,1]\rightarrow\gamma([0,1])$ is a homeomorphism, if we have $[\tau,\tau']\subseteq\gamma([0,1])$ 
then the inverse image of $[\tau,\tau']$ under $\gamma$ is a connected subset of $[0,1]$ containing $0$ 
and $1$. Hence we have $[\tau,\tau']=\gamma([0,1])$ as desired. Let us therefore show that 
$[\tau,\tau']\subseteq\gamma([0,1])$. 

 By choosing a different partial ordering on $\mathcal T$, we can assume without loss of 
generality that $\tau$ is the root of $\mathcal{T}$ (see Lemma~\ref{cambioraiz}). In view of 
\cite[Corollary~7.9]{VT}, the mapping $f$ which sends $t\in[0,1]$ to 
$\gamma(t)\wedge\tau'\in[\tau,\tau']$ is continuous. Take $\sigma\in[\tau,\tau']$ different 
from $\tau$ and $\tau'$. Define $\Sigma$ to be the set of all $t\in[0,1]$ such that 
$f(t)=\sigma$. On the one hand, $\Sigma$ is nonempty thanks to the Intermediate Value Theorem. 
On the other hand, $\Sigma$ is closed, so $s=\inf\Sigma$ belongs to $\Sigma$. In order to 
complete the proof it suffices to show that $\gamma(s)=\sigma$.

 We proceed now by contradiction. Suppose that 
$\gamma(s)\neq\sigma$. The basic open subset $U_\sigma(\gamma(s))$ of $\mathcal{T}$ is mapped to 
$\sigma$ by $f$, hence $U=U_\sigma(\gamma(s))\cap\gamma([0,1])$ is an open subset of $\gamma([0,1])$ 
containing $\gamma(s)$ such that $f(U)=\set{\sigma}$. It follows that there exists an open neighborhood 
of $s$ in $[0,1]$ whose image by $f$ is reduced to $\sigma$, contradicting the minimality of $s$. 
\end{proof}

More generally, in the sequel we will say that a topological space $Z$ is a \emph{tree} if 
there exists a partial order $\leq$ on $Z$ such that $(Z,\leq)$ is a tree and the weak tree 
topology coincides with the topology of $Z$.

\begin{corollary}\label{subtree}
Any arcwise connected subspace of a tree is a tree.
\end{corollary}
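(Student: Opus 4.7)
The plan is as follows. Fix any point $\sigma_0\in S$. By Lemma~\ref{cambioraiz}, replacing the order on $\mathcal{T}$ with the one rooted at $\sigma_0$ changes neither segments nor the weak tree topology, so I may assume that $\sigma_0$ is already the root of $\mathcal T$. I define $\leq_S$ to be the restriction of $\leq$ to $S$, and the goal is to show that $(S,\leq_S)$ is a tree and that its weak tree topology coincides with the topology of $S$ as a subspace of $\mathcal T$.

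The cornerstone is the \emph{convexity} of $S$ in $\mathcal T$: for any $\sigma,\sigma'\in S$ one has $[\sigma,\sigma']_{\mathcal T}\subseteq S$. This follows at once from arcwise connectedness combined with Lemma~\ref{uniqarc}: an injective continuous map $\gamma:[0,1]\to S$ with $\gamma(0)=\sigma$ and $\gamma(1)=\sigma'$, composed with the inclusion $S\hookrightarrow\mathcal T$, has image $[\sigma,\sigma']_{\mathcal T}$ by Lemma~\ref{uniqarc}. From convexity, the four defining properties of a tree hold for $(S,\leq_S)$: $\sigma_0$ is the unique smallest element; the set $\{\tau\in S:\tau\leq_S\sigma\}$ equals $[\sigma_0,\sigma]_{\mathcal T}$ and is order-isomorphic to a real interval; every totally ordered convex subset of $(S,\leq_S)$ is, by convexity, also totally ordered convex in $(\mathcal T,\leq)$ and hence order-isomorphic to a real interval; and any non-empty $A\subseteq S$ admits an infimum in $S$ because $\inf_{\mathcal T}A$ lies in $[\sigma_0,\sigma]_{\mathcal T}\subseteq S$ for any choice of $\sigma\in A$. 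Moreover, the segments agree: $[\sigma,\sigma']_S=[\sigma,\sigma']_{\mathcal T}$ for all $\sigma,\sigma'\in S$.

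It remains to compare the two topologies on $S$. The identity of segments gives $U^S_\sigma(\sigma')=U^{\mathcal T}_\sigma(\sigma')\cap S$ for every pair $\sigma,\sigma'\in S$ with $\sigma\neq\sigma'$, so the weak tree topology on $S$ is no finer than the subspace topology. The converse inclusion is the main obstacle: given $\tau,\tau'\in\mathcal T$ with $\tau\neq\tau'$ and a point $\sigma\in U^{\mathcal T}_\tau(\tau')\cap S$, I must produce a neighbourhood of $\sigma$ in the weak tree topology on $S$ lying inside $U^{\mathcal T}_\tau(\tau')\cap S$. Since $\sigma$ and $\tau'$ share a tangent class at $\tau$, I can rename $V:=U^{\mathcal T}_\tau(\tau')\cap S=U^{\mathcal T}_\tau(\sigma)\cap S$. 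When $\tau\in S$, the set $V=U^S_\tau(\sigma)$ is itself such a neighbourhood.

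The remaining case is $\tau\notin S$, which I split according to whether $(\tau,\sigma)_{\mathcal T}\cap S$ is empty. If there is some $\mu\in(\tau,\sigma)_{\mathcal T}\cap S$, a direct meet computation in the $\tau$-rooted order shows $U^{\mathcal T}_\mu(\sigma)\subseteq U^{\mathcal T}_\tau(\sigma)$, so $U^S_\mu(\sigma)=U^{\mathcal T}_\mu(\sigma)\cap S$ is the desired neighbourhood of $\sigma$. If $(\tau,\sigma)_{\mathcal T}\cap S=\emptyset$, I claim $V=S$: for any $\mu\in S$, convexity yields $[\mu,\sigma]_{\mathcal T}\subseteq S$, so the segment avoids $\tau$; in particular its bottom $\mu\wedge\sigma$ in the $\tau$-rooted order is different from $\tau$, whence $\mu\wedge\sigma>\tau$ and $\mu\in U^{\mathcal T}_\tau(\sigma)$. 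This finishes the comparison of topologies and hence the proof.
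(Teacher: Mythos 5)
Your proof is correct and follows the same strategy as the paper: change the root to a point of $S$ via Lemma~\ref{cambioraiz}, establish convexity of $S$ from arcwise connectedness and Lemma~\ref{uniqarc}, and then verify the tree axioms and the agreement of topologies (the latter two steps the paper leaves as ``straightforward,'' while you carefully fill them in). One small remark: your Case~2 is superfluous, since the argument you give in Case~3 uses only $\tau\notin S$ and not the emptiness of $(\tau,\sigma)_{\mathcal T}\cap S$, so it already shows $U^{\mathcal T}_\tau(\sigma)\cap S=S$ whenever $\tau\notin S$.
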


\begin{proof}
Let $(\mathcal{T},\leq)$ be a tree and $\mathcal{S}\subseteq\mathcal{T}$ an arcwise connected subspace 
of $\mathcal{T}$. As previously noted, we can assume that the root $\tau_0$ of $\mathcal{T}$ belongs 
to $\mathcal{S}$. Since $\mathcal{S}$ is arcwise connected, by Lemma~\ref{uniqarc}, the segment 
$[\tau_0,\sigma]$ of $\mathcal{T}$ is contained in $\mathcal{S}$ for any $\sigma\in\mathcal{S}$. It 
is straightforward to deduce from this that $\mathcal{S}$ together with the restriction of the partial 
ordering $\leq$ satisfies the axioms of a tree and that the weak tree topology it carries coincides 
with the topology induced from that of $(\mathcal{T},\leq)$.
\end{proof}

 Let us go back to the valuative tree $\Val$ of Subsection~\ref{subsectionNL}. It is shown 
in \cite[Section~3.2]{VT} (see also \cite{No}) that $\Val$ is a tree with the partial ordering 
$\leq$ defined by $\nu\leq\nu'$ if and only if $\nu(f)\leq\nu'(f)$ for all $f\in\mathbf C[[x,y]]$. 
The proof relies on the encoding of valuations by key polynomials (see \cite[Ch.~2]{VT}). This 
structure was extended in \cite{Gr} to the case of a regular closed point of a surface. In turn, 
the relative valuative tree $\Val_z$ is also a tree. We refer to \cite[Section~3.9]{VT} for details.


\subsection{Graphs}\label{subsecgraphs}

 There are several definitions of a graph. Here we have adopted the view point of \cite{Se}. 
A \emph{graph} $\Gamma$ consists of two sets $V(\Gamma)\neq\emptyset$ and $E(\Gamma)$, whose 
elements are respectively called the \emph{vertices} and the \emph{edges} of $\Gamma$, and 
two maps,

\begin{enumerate}[$\bullet$]
\item $E(\Gamma)\rightarrow E(\Gamma)$, $e\mapsto \bar{e}$, such that $e\neq\bar{e}$ and 
$\bar{\bar{e}}=e$;
\item $E(\Gamma)\rightarrow V(\Gamma)$, $e\mapsto \iota(e)$.
\end{enumerate}

Therefore any edge $e\in E(\Gamma)$ comes with a \emph{reverse edge} $\bar{e}$. For any edge 
$e$ we call $\iota(e)$ and $\iota(\bar{e})$ the \emph{endpoints} of $e$. 
We also say that $e$ is \emph{incident} to $\iota(e)$ and $\iota(\bar{e})$, or $e$ joins $\iota(e)$ 
to $\iota(\bar{e})$. Two vertices are \emph{adjacent} if there exists an edge incident to both 
(a vertex may be adjacent to itself).

 Given $u,v\in V(\Gamma)$, a \emph{path} of length $n\geq1$ joining $u$ to $v$ is a 
sequence of vertices and edges of $\Gamma$ of the form 
$u=v_0,e_1,v_1,e_2,\ldots,e_n,v_n=v$ where $v_{i-1}=\iota(e_i)$ and $v_{i}=\iota(\bar{e}_i)$ for 
$i=1,\ldots,n$. If $e_i\neq\bar{e}_{i+1}$ for $i=1,\ldots,n-1$, then the path is \emph{reduced}. 
By convention we shall call a path of length zero any sequence of the form $u$, where $u$ is a 
vertex of $\Gamma$. A path of length zero is always reduced. A graph is \emph{connected} if any 
two vertices can be joined by a path. Throughout this section by graph we mean a connected graph 
which is in addition \emph{finite}, which means that its sets of vertices and edges are both finite. 

 A graph $\Gamma$ is a combinatorial object, however it can also be considered as a finite 
one-dimensional CW-complex. In order to do this, we endow $V(\Gamma)$ and $E(\Gamma)$ with 
the discrete topology and the unit interval $[0,1]$ with the topology induced from that of 
$\mathbf{R}$. The topological space $|\Gamma|$, which we call the \emph{topological realization} 
of $\Gamma$, is the quotient space of the disjoint union $V(\Gamma)\sqcup(E(\Gamma)\times[0,1])$ 
under the identifications $(e,0)\sim\iota{(e)}$ and $(e,t)\sim(\bar{e},1-t)$ for any $e\in E(\Gamma)$ 
and $t\in[0,1]$. Let us denote by $q$ the quotient map and, for any $e\in E(\Gamma)$, call 
$|e|=q(\set{e}\times[0,1])$ an \emph{edge} of $|\Gamma|$. Then $|e|=|\bar{e}|$ and any edge of 
$|\Gamma|$ is homeomorphic either to $[0,1]$ or the unit circle $\textbf{S}^1$. The \emph{degree} 
of a vertex $v$ of a graph $\Gamma$ is the number of edges $e$ of $\Gamma$ such that $\iota(e)=v$.

 A \emph{morphism} from a graph $\Gamma$ to a graph $\Gamma'$ is a mapping $\gamma$ 
from $V(\Gamma)\cup E(\Gamma)$ to $V(\Gamma')\cup E(\Gamma')$ which sends vertices to vertices 
and edges to edges in such a way that $\gamma(\bar{e})=\overline{\gamma(e)}$ and 
$\gamma(\iota(e))=\iota(\gamma(e))$ for any $e\in E(\Gamma)$. Note that this implies that 
$\gamma(\iota(\bar{e}))=\iota(\overline{\gamma(e)})$ for any $e\in E(\Gamma)$. An \emph{isomorphism} 
of graphs is a bijective morphism of graphs. We say that a graph $\Gamma$ is a \emph{subgraph} 
of $\Gamma'$ if $V(\Gamma)\subseteq V(\Gamma')$, $E(\Gamma)\subseteq E(\Gamma')$ and the inclusion 
$V(\Gamma)\cup E(\Gamma)\hookrightarrow V(\Gamma')\cup E(\Gamma')$ is a morphism. If this is verified, 
then we have a natural closed embedding $|\Gamma|\hookrightarrow|\Gamma'|$.

 A graph $\Gamma$ is a \emph{tree} if given any two vertices $u,v$ of $\Gamma$ there 
exists a unique reduced path joining $u$ to $v$. The topological realization of $\Gamma$ is then 
a tree in the sense introduced before. More precisely, the choice of a vertex of $\Gamma$ determines 
a unique tree structure on $|\Gamma|$:

 Let us choose a vertex of $\Gamma$ and call it $v_0$. Take a point $p=q((e,t))=q((\bar{e},1-t))$ 
of $|\Gamma|$, $p\neq v_0$. There exists a unique edge $b\in\set{e,\bar{e}}$ for which we can find 
a reduced path $v_0,e_1,v_1,\ldots,v_{n-1}=\iota(b),e_n=b,v_n=\iota(\bar{b})$. We may assume that 
$b=e$. Since $\Gamma$ is a tree, we have in addition that any two vertices in the path are different, 
so it induces an injective continuous mapping $\alpha_p:[0,1]\rightarrow|\Gamma|$ such that 
$\alpha_p(0)=v_0$ and $\alpha_p(1)=p$. We set $\alpha_{v_0}(u)=v_0$ for all $u\in[0,1]$. It suffices 
to declare $p\leq p'$ if and only if $\alpha_p([0,1])\subseteq\alpha_{p'}([0,1])$.

 Given a graph $\Gamma$ which is not a tree, following the notations of \cite{Sta} we associate 
to $\Gamma$ its \emph{core} (see also the definition of the skeleton of a quasipolyhedron given 
in \cite[p.~76]{Ber}).

\begin{definition}\label{defcoregraph}
Let $\Gamma$ be a graph. If $\Gamma$ is not a tree, then its core is the subgraph of $\Gamma$ 
obtained by repeatedly deleting a vertex of degree one and the edges incident to it (which are 
exactly two, one being the reverse of the other) until no more vertices of degree one remain. 
We denote the core of $\Gamma$ by $\C{\Gamma}$. By convention we define the core of a tree to 
be the empty set and we set $|\emptyset|:=\emptyset$.
\end{definition}

 We have the following topological characterization:

\begin{lemma}\label{coreversiontop}
Let $\Gamma$ be a graph. The complement of $|\C{\Gamma}|$ in $|\Gamma|$ is the 
set of points $p\in|\Gamma|$ which admit an open neighborhood $U\subsetneq|\Gamma|$ whose closure 
$\overline{U}$ in $|\Gamma|$ is a tree and whose boundary is reduced to a vertex of $\Gamma$.
\end{lemma}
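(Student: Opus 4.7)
I would prove the lemma by establishing the two inclusions separately, exploiting the inductive structure of the leaf-removal procedure defining $\C{\Gamma}$.

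For the inclusion ``$\subseteq$'', fix a point $p \in |\Gamma| \setminus |\C{\Gamma}|$. Grouping together, for each attaching vertex of $\C{\Gamma}$, the vertices and edges of $\Gamma$ erased during the leaf-removal process and connected to that vertex, one decomposes $\Gamma \setminus \C{\Gamma}$ into finitely many subtrees $T_1,\dots,T_k$ of $\Gamma$, where each $T_i$ meets $\C{\Gamma}$ only at a single vertex $v_i$. The point $p$ then lies in a unique $|T_i|$ and satisfies $p \ne v_i$; setting $U = |T_i| \setminus \{v_i\}$ gives an open subset of $|\Gamma|$ whose closure is the tree $|T_i|$ with boundary $\{v_i\}$, and which is strictly contained in $|\Gamma|$ provided $\C{\Gamma} \ne \emptyset$. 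When $\Gamma$ is itself a tree (so $\C{\Gamma} = \emptyset$ by convention), an analogous construction works: choose a leaf $\ell$ of $\Gamma$ distinct from $p$, which exists as soon as $\Gamma$ has at least two vertices, and take $U = |\Gamma| \setminus \{\ell\}$.

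For the inclusion ``$\supseteq$'', suppose that $p \in U$ with $U$ of the described type and $\partial U = \{v\}$. The first step is to observe that $\overline U$ is the topological realization of a subgraph $\Gamma_U$ of $\Gamma$. Two facts make this work. First, for every edge $e$ of $\Gamma$, the open cell $|e|^\circ$ is connected and contained in $U \cup (|\Gamma| \setminus \overline U)$, since the only points of $|\Gamma|$ outside this union lie in $\partial U = \{v\}$, which is a vertex and hence not an interior edge point; by connectedness, $|e|^\circ$ lies either entirely in $U$ or entirely outside $\overline U$. Second, any vertex $w \in U$ has a neighborhood in $|\Gamma|$ inside $U$, which forces every edge of $\Gamma$ incident to $w$ to meet $U$ in its open cell, hence to lie entirely in $\overline U$ by the first fact. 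Letting $\Gamma_U$ consist of $v$, the vertices of $\Gamma$ lying in $U$, and the edges of $\Gamma$ whose open cell is in $U$, one obtains a subgraph with $|\Gamma_U| = \overline U$; since $\overline U$ is a tree, so is $\Gamma_U$, and it meets the rest of $\Gamma$ only at $v$.

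I would then run the leaf-removal procedure defining $\C{\Gamma}$ by first exhausting $\Gamma_U \setminus \{v\}$. Any vertex $w \ne v$ of $\Gamma_U$ has all its $\Gamma$-edges inside $\Gamma_U$, so its degree in the ambient graph equals its degree in the current state of $\Gamma_U$ throughout the process; thus any leaf of $\Gamma_U$ other than $v$ is a leaf of the ambient graph and may legitimately be removed. Since $\Gamma_U$ is a finite tree with at least one vertex other than $v$ (because $p \in U$), iterating yields a sequence of leaf removals that exhausts $\Gamma_U \setminus \{v\}$. After these removals, every vertex and open edge of $\Gamma_U$ other than $v$ has been erased; in particular the cell of $|\Gamma|$ containing $p$ has disappeared, so $p \notin |\C{\Gamma}|$. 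The main obstacle I expect is the structural claim that $\overline U$ realizes a subgraph of $\Gamma$; once this identification is in place, both inclusions reduce to tree-combinatorics applied to the leaf-deletion process.
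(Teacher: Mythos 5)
Your proof is correct and follows essentially the same strategy as the paper's: show that the closure $\overline U$ is the topological realization of a subgraph which is a tree attached to the rest of $\Gamma$ at a single vertex, and run the core computation against that subgraph. The paper's proof of the direction you call ``$\supseteq$'' stops at the observation that ``$E(\Gamma')\cap E(\C{\Gamma})=\emptyset$'' and declares the conclusion; your version makes this explicit by exhibiting a deletion order for the leaf-removal process that exhausts $\Gamma_U\setminus\{v\}$ first. That is a genuine strengthening of the exposition: it makes clear that the only fact needed about the core is its independence of deletion order. For the direction you call ``$\subseteq$'' the paper identifies the specific branch-tree path through $p$ and takes the connected component of $|\Gamma|\setminus\{v_0\}$ containing it, while you take the entire branch tree $T_i$ attached at $v_i$; both choices yield a valid $U$, yours being possibly larger. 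You also handle explicitly the degenerate case where $\Gamma$ is a tree, which the paper's argument (starting from ``consider $e\in E(\Gamma)\setminus E(\C{\Gamma})$'' with $v_0\in V(\C{\Gamma})$) silently presupposes is not the case. One further boundary case is not covered by either argument: a graph consisting of a single vertex and no edges, whose only proper open neighborhood of its single point is $\emptyset$; the statement of the lemma is vacuously problematic there, and your remark that the construction needs $\Gamma$ to have at least two vertices is the honest place to flag it.
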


\begin{proof}
 Take $p\in|\Gamma|$. Let us suppose first that $U\subsetneq|\Gamma|$ is an open neighborhood 
of $p$ verifying the hypothesis of the Lemma. Note that $\overline{U}=U\sqcup\set{v}$ for some 
$v\in V(\Gamma)$. Since the boundary of $U$ is reduced to $v$, if $q(\set{e}\times(a,b))$ is 
contained in $U$ for some $0\leq a<b\leq 1$ and $e\in E(\Gamma)$, then $|e|$ is entirely contained 
in $\overline{U}$. From this and the fact that a tree is connected it follows that there exists 
a subgraph $\Gamma'$ of $\Gamma$ such that $\overline{U}=|\Gamma'|$. The graph $\Gamma'$ is clearly 
a tree. Moreover, recall that if $u\in V(\Gamma)$ belongs to $U$ then $|e|\subseteq U$ for any edge 
$e$ of $\Gamma$ such that $\iota(e)=u$ and $\iota(\bar{e})\neq v$. Therefore we have that 
$E(\Gamma')\cap E(\C{\Gamma})=\emptyset$ and $V(\Gamma')\cap V(\C{\Gamma})$ is either empty or equal 
to $\set{v}$, which implies that $p$ does not belong to the topological realization of $\C{\Gamma}$.

 Assume now that $p\notin|\C{\Gamma}|$. Consider 
$e\in E(\Gamma)\setminus E(\C{\Gamma})$ such that $p\in|e|$. There exists a unique subgraph $\Gamma'$ 
of $\Gamma$ such that $e\in E(\Gamma')$, the graph $\Gamma'$ has a unique vertex $v'$ of degree one 
which is in addition an endpoint of $e$, and $\C{\Gamma}=\C{\Gamma'}$. We may suppose that 
$v'=\iota(\bar{e})$. We can find a unique path $v_0,e_1,v_1,\ldots,v_n,e_n=e,v_n=v'$ in $\Gamma'$ of 
length $n\geq1$ where $v_0\in V(\C{\Gamma})$ and $v_i,e_i\notin E(\C{\Gamma})$ for $1\leq i\leq n$. 
The connected component of $|\Gamma|\setminus\set{v_0}$ which contains $|e|$ is an open neighborhood 
of $p$, its closure is a tree and its boundary is the point $v_0$.
\end{proof}

 We may thus think of $\Gamma$ as its core with some disjoint trees attached to it. 
Observe also that $|\Gamma|$ admits a deformation retraction to $|\C{\Gamma}|$. We introduce 
now the equivalence relation in the set of graphs on which the characterization given in 
Theorem~\ref{thmsuperficies} relies.

\begin{definition}\label{defeqgraphs}
Two graphs $\Gamma$ and $\Gamma'$ are equivalent if $|\C{\Gamma}|$ is homeomorphic to 
$|\C{\Gamma'}|$.
\end{definition}

\begin{example}\label{stricterrelation}
 Note that this equivalence relation is stricter than the homotopy equivalence. The 
three graphs consisting of two triangles sharing a vertex, two triangles sharing a side, and 
a line segment with a triangle attached to each endpoint, have all homotopy equivalent topological 
realizations but they are pairwise non-equivalent.
\end{example}

 The equivalence relation of Definition~\ref{defeqgraphs} may also be stated in terms of 
elementary modifications of graphs.

\begin{definition}\label{dualgraph}
We define two kinds of elementary modifications in a fixed graph $\Gamma$:

\begin{enumerate}[$\bullet$]
\item An expansion at a vertex $u\in V(\Gamma)$, that is, adding a new vertex $v$ to 
$V(\Gamma)$ and two edges $e,\bar{e}$ to $E(\Gamma)$ such that $\iota(e)=u$ and $\iota(\bar{e})=v$.
\item A subdivision of an edge $e\in E(\Gamma)$, which consists of the addition of a new 
vertex $v$ to $V(\Gamma)$, the addition of new edges $e',e''$ (and their reverses) to $E(\Gamma)$ 
joining $\iota(e)$ to $v$ and $v$ to $\iota(\bar{e})$ respectively, and the deletion of $e$ and 
$\bar{e}$.  
\end{enumerate}
\end{definition}

 Observe that the subdivision of an edge is an operation on a graph whose result is still a graph 
but which does not induce a morphism. These modifications encode the blowing-up centered at a point 
of a simple normal crossings divisor $E$ on a non singular surface $Y$. 

 More precisely, suppose that the irreducible components of $E$ are smooth and the intersection 
of any two of them is at most a point. To such a divisor $E$ we attach a graph $\Gamma_E$, its 
\emph{dual graph}, whose vertices are in bijection with the irreducible components of $E$ and 
where two vertices are adjacent if and only if the corresponding irreducible components of $E$ 
intersect. Observe that $\Gamma_E$ has no loops ($\iota(e)\neq\iota(\bar{e})$ for any edge $e$) 
and no multiple edges (if two different edges $e,e'$ have the same endpoints then $e'=\bar{e}$).

 Let $\pi:Y'\to Y$ be the blowing-up of $Y$ centered at a point $p$ of $E$. Denote by $E'$ the 
total transform of $E$ under $\pi$. If $p$ is a \emph{free} point, that is, $p$ belongs to a 
single irreducible component $D$ of $E$, then the dual graph $\Gamma_{E'}$ is the result of the 
expansion of $\Gamma_E$ at the vertex associated to $D$. We may define and embedding 
$|\Gamma_E|\hookrightarrow|\Gamma_{E'}|$ and a natural continuous retraction 
$|\Gamma_{E'}|\to|\Gamma_E|$. Otherwise, $p$ is the intersection of two irreducible components 
of $E$ and it is called a \emph{satellite} point. Then $\Gamma_{E'}$ is obtained from $\Gamma_E$ 
by subdividing any of the edges of $\Gamma_E$ associated to $p$. We have that $|\Gamma_E|$ is 
naturally homeomorphic to $|\Gamma_{E'}|$.

 We also consider any isomorphism of graphs as an elementary modification. The following result 
will be useful in Section~\ref{section4}.

\vspace{2cm}

\begin{proposition}\label{eqgraphs}
Let $\Gamma$ and $\Gamma'$ be two graphs. The following are equivalent:
\begin{enumerate}[(1)]
\item The graphs $\Gamma$ and $\Gamma'$ are equivalent.
\item There exist two finite sequences of elementary modifications 
\[\Gamma=\Gamma_0\rightarrow\Gamma_1\rightarrow\ldots\rightarrow\Gamma_n \quad\text{ and }\quad
\Gamma'=\Gamma_0'\rightarrow\Gamma_1'\rightarrow\ldots\rightarrow\Gamma_m'\] such that 
$\Gamma_n$ and $\Gamma_m'$ are isomorphic.
\end{enumerate}
\end{proposition}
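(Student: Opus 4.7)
For the implication $(2)\Rightarrow(1)$, my plan is to verify that each elementary modification preserves the homeomorphism type of the core realization. A subdivision of an edge $e$ creates a new degree-two vertex and induces the canonical homeomorphism $|\Gamma|\to|\Gamma'|$, which restricts to a homeomorphism $|\C{\Gamma}|\to|\C{\Gamma'}|$ (the new vertex lies in the core exactly when $e$ did). An expansion at a vertex attaches a pendant vertex and pendant edge, both immediately removed in the first pass of the core construction, so $\C{\Gamma}=\C{\Gamma'}$ as subgraphs. Isomorphisms preserve cores tautologically. Composing along the two sequences, $\Gamma_n\cong\Gamma'_m$ yields $|\C{\Gamma}|\cong|\C{\Gamma_n}|\cong|\C{\Gamma'_m}|\cong|\C{\Gamma'}|$.

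For the converse $(1)\Rightarrow(2)$, I would separate the argument according to whether the cores are empty. When $\Gamma$ and $\Gamma'$ are both trees, I would use expansions only: from any chosen vertex $v\in V(\Gamma)$ one expands to add a pendant edge and then grows a copy of $\Gamma'$ at its far endpoint via successive expansions, producing the ``bridge tree'' obtained by joining $\Gamma$ and $\Gamma'$ by one extra edge. The symmetric construction from $\Gamma'$ reaches an isomorphic target, proving $(2)$ in this case. The key fact behind this is that any finite rooted tree can be grown at a vertex of an existing graph using only expansions, by adding children one at a time from the root outward.

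When both cores are nonempty, I would proceed in two stages. \emph{Stage one:} bring the cores to isomorphic graphs by subdivision. Given a homeomorphism $h\colon|\C{\Gamma}|\to|\C{\Gamma'}|$, points of local valence at least three are topologically distinguished and $h$ identifies the branch vertices on the two sides; the arcs of degree-two vertices between consecutive branch vertices also correspond under $h$, so by subdividing each arc on both sides to carry the same number of interior vertices, I obtain isomorphic subdivisions. Since the core is a subgraph, these are elementary modifications of $\Gamma$ and $\Gamma'$ themselves. \emph{Stage two:} equalize the non-core parts. The resulting graphs decompose as a common core together with rooted dangling trees $T_v$ (resp.\ $T'_{\phi(v)}$) attached at each core vertex $v$ (resp.\ its image under the core isomorphism $\phi$). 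Applying the tree-case construction independently to each pair $(T_v,T'_{\phi(v)})$, confined to the respective non-core parts, produces $\Gamma_n\cong\Gamma'_m$.

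The main obstacle I anticipate lies in Stage one: converting the purely topological homeomorphism $h$ into a genuine combinatorial graph isomorphism after suitable subdivision. Care is needed because cores may contain loops and multiple edges, so branch vertices, embedded circles coming from loops, and parallel arcs must each be recognized and matched up topologically before the arc-length bookkeeping works. In addition, once the cores are made isomorphic via $\phi$, an extra round of subdivision may be required on both sides so that the finitely many attachment points of the dangling trees also sit at pairs of vertices identified by $\phi$. Once these finite alignment adjustments are in place, Stage two is essentially a vertex-by-vertex repetition of the tree case.
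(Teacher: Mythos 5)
Your proof is correct and follows essentially the same approach as the paper: for $(2)\Rightarrow(1)$ you check that each elementary modification preserves the homeomorphism type of $|\C{\cdot}|$, and for $(1)\Rightarrow(2)$ you handle the tree case by expansions and the non-tree case by bringing the cores to isomorphic subdivisions. The paper phrases the non-tree case as a reduction to $\Gamma=\C{\Gamma}$ and $\Gamma'=\C{\Gamma'}$ followed by the "isomorphic subdivisions" fact, which silently relies on the relation in $(2)$ being transitive; your explicit two-stage construction (subdivide the cores inside the ambient graphs, then grow matching pendant trees at corresponding core vertices) supplies exactly that missing step, and the "extra alignment subdivision" you flag is a harmless precaution rather than a genuine necessity, since any core vertex can serve as an attachment point for a grown tree.
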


\begin{proof}
Let $\Gamma\rightarrow\Delta$ be an elementary modification. Then $\C{\Gamma}=\emptyset$ if 
and only if $\C{\Delta}=\emptyset$. In addition, if $\C{\Gamma}\neq\emptyset$ then the topological 
realizations of $\C{\Gamma}$ and $\C{\Delta}$ are homeomorphic. Hence $\Gamma$ and $\Delta$ are 
equivalent graphs. From this observations we deduce that $\textup{(2)}$ implies $\textup{(1)}$. 
Let us now show the converse. 

Suppose that $\Gamma$ and $\Gamma'$ are equivalent but not isomorphic. If $\C{\Gamma}$ and 
$\C{\Gamma'}$ are both the empty set, then it suffices to consider expansions at vertices to prove the 
result. Otherwise, observe that $\Gamma$ (resp. $\Gamma'$) is obtained from $\C{\Gamma}$ (resp. 
$\C{\Gamma'}$) either after a finite number of expansions at vertices or after an isomorphism 
of graphs. Therefore we may restrict ourselves to the case $\Gamma=\C{\Gamma}$ and $\Gamma'=
\C{\Gamma'}$. We now observe that the topological realizations of any two graphs are homeomorphic 
if and only if they have isomorphic subdivisions. Since $|\Gamma|$ and $|\Gamma'|$ are homeomorphic, 
this ends the proof.
\end{proof}


\section{Normal surface singularity case}\label{section4}

 In this section we study the homeomorphism type of $\RZ{(X,x)}$ and $\NL{(X,x)}$ 
when $(X,x)$ is a normal surface singularity.

\subsection{Largest Hausdorff quotient of RZ\texorpdfstring{$\boldsymbol{(X,x)}$}{(X,x)}}

Let us denote by $\text H$ the largest Hausdorff quotient of $\RZ{(X,x)}$ and by 
$q:\RZ{(X,x)}\to\text H$ the quotient map. If $f$ is a continuous map from $\RZ{(X,x)}$ 
to a Hausdorff topological space $Z$, then by definition of the pair $(\text H,q)$, we have 
a unique continuous map $\tilde f$ from $\text H$ to $Z$ such that $\tilde f\circ q= f$. Observe 
also that, since $q$ is a continuous surjective map and $\RZ{(X,x)}$ is quasi-compact, $\text H$ 
is quasi-compact. Therefore, $\text H$ is compact.

 The purpose of this subsection is to prove the following: 

\begin{proposition}\label{quotientHausdorff}
Let $x$ be a normal point of a surface $X$ and let $\pi$ be the map from $\RZ{(X,x)}$ to 
$\NL{(X,x)}$ defined in Subsection~\ref{canonicalmap}. With the previous notations, 
the map $\tilde\pi:\textnormal H\to\NL{(X,x)}$ is a homeomorphism.  
\end{proposition}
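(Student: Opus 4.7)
The plan is to obtain $\tilde\pi$ from the universal property of the largest Hausdorff quotient: since $\NL{(X,x)}$ is Hausdorff and $\pi:\RZ{(X,x)}\to\NL{(X,x)}$ is the continuous surjection of Proposition~\ref{pi}, the map $\pi$ factors uniquely through $q$ as a continuous $\tilde\pi:\textnormal{H}\to\NL{(X,x)}$. Being a continuous image of the quasi-compact $\RZ{(X,x)}$ inside a Hausdorff space, $\textnormal{H}$ is itself compact, and $\tilde\pi$ is surjective because $\pi$ is. A continuous bijection between compact Hausdorff spaces is automatically a homeomorphism, so the whole task reduces to proving $\tilde\pi$ injective, i.e.\ that $\pi(\nu)=\pi(\nu')$ forces $q(\nu)=q(\nu')$. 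I would fix $v\in\NL{(X,x)}$ and describe $\pi^{-1}(v)$ according to whether the support $\sop{v}=v^{-1}(+\infty)$ is trivial or not; this dichotomy matches exactly the two cases in the construction of $\pi$ in Subsection~\ref{canonicalmap}.

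When $\sop{v}=(0)$, the semivaluation $v$ extends uniquely to a rank-one valuation $\mu_v$ of $K$ centered at $x$, and $\mu_v$ is the only rank-one element of $\pi^{-1}(v)$. Any other $\nu\in\pi^{-1}(v)$ has rank two with $j=1$, which in the notation of Subsection~\ref{canonicalmap} forces its rank-one coarsening $\nu_1$ to be equivalent to $\mu_v$; hence $R_\nu\subseteq R_{\nu_1}=R_{\mu_v}$, and by the closure description recalled at the beginning of Subsection~2.1 this gives $\nu\in\overline{\set{\mu_v}}\subseteq\RZ{(X,x)}$. The continuous map $q$ sends $\overline{\set{\mu_v}}$ into $\overline{\set{q(\mu_v)}}=\set{q(\mu_v)}$ since $\textnormal{H}$ is Hausdorff, so the whole fiber is collapsed in $\textnormal{H}$.

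When $\sop{v}=\mathfrak p\neq(0)$, normality of $(X,x)$ is the crucial input. Because $X$ is a normal surface, $\mathfrak p$ has height one in $\mathcal{O}_{X,x}$ and $(\mathcal{O}_{X,x})_{\mathfrak p}$ is a DVR, producing a unique rank-one valuation $\mu_{\mathfrak p}$ of $K$ with that valuation ring. Every $\nu\in\pi^{-1}(v)$ then has rank two with rank-one coarsening $\mu_{\mathfrak p}$, and its residual valuation $\overline\nu$ on $k_{\mu_{\mathfrak p}}=\kappa(C)$ (where $C=V(\mathfrak p)$) is centered at $x$ on $\mathcal{O}_{C,x}=\mathcal{O}_{X,x}/\mathfrak p$. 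The identity $\pi(\nu)=v$ prescribes the restriction of $\overline\nu$ to $\mathcal{O}_{C,x}$, and since $\kappa(C)$ is the fraction field of $\mathcal{O}_{C,x}$ two valuations of $\kappa(C)$ agreeing on $\mathcal{O}_{C,x}$ must agree on all of $\kappa(C)$. Hence $\overline\nu$, and therefore $\nu=\mu_{\mathfrak p}\circ\overline\nu$, is uniquely determined by $v$, so $\pi^{-1}(v)$ is a singleton and no identification is needed.

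The hardest part will be the second case, where normality of $(X,x)$ is used twice: once to force the height-one prime $\mathfrak p$ to carry a unique rank-one valuation of $K$, and once to guarantee, via the two-dimensional setting, that the residual valuation is rank one and recoverable from its restriction to $\mathcal{O}_{C,x}$. Putting the two cases together yields the injectivity of $\tilde\pi$, and combined with the initial reduction gives the desired homeomorphism.
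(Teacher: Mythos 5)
Your proposal is correct and follows essentially the same route as the paper: reduce to injectivity of $\tilde\pi$ via compactness of $\textnormal{H}$, split on whether $\sop{v}$ is zero, use the closure description of the Zariski topology in the first case, and use normality to identify $(\mathcal{O}_{X,x})_{\sop{v}}$ as the unique DVR coarsening in the second case, then determine the residual valuation from its restriction to $\mathcal{O}_{X,x}/\sop{v}$. The only cosmetic differences are that you describe the full fiber $\pi^{-1}(v)$ rather than fixing two preimages, and you invoke continuity-plus-closedness-of-singletons in place of the paper's "cannot be separated by disjoint opens" phrasing; one minor quibble is that normality is really used only once (for the DVR) — the rank of the residual valuation and the determination from the quotient ring are dimension/field-of-fractions facts rather than normality facts.
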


\begin{proof}
The map $\tilde\pi$ is a continuous surjective map from a quasi-compact 
space to a Hausdorff space. Hence to establish the result it is enough to show that 
$\tilde\pi$ is injective. Suppose that $\pi(\nu)=\pi(\nu')=v$ for some $\nu,\nu'\in\RZ{(X,x)}$. 
Let us prove that $q(\nu)=q(\nu')$. 

If $v$ is a valuation of $K$ then by construction $\nu$ and $\nu'$ are both in the closure 
of $v$ in $\RZ{(X,x)}$. Hence $\nu$ and $\nu'$ cannot be separated by disjoint open sets and 
as a consequence they have the same the image under any continuous map to a Hausdorff space. 
We conclude that $q(\nu)=q(\nu')$.

We assume now that $\sop{v}=v^{-1}(+\infty)$ is not reduced to zero. By the hypothesis on the 
dimension, the valuations $\nu$ and $\nu'$ have necessarily rank two. Write $\nu=
\nu_1\circ\overline{\nu}$ and $\nu'=\nu_1'\circ\overline{\nu'}$. Then we have that 
$\sop{v}=\mathcal{O}_{X,x}\cap m_{\nu_1}=\mathcal{O}_{X,x}\cap m_{\nu_1'}
\subsetneq \frak{m}_{X,x}$. The localization 
${\left(\mathcal{O}_{X,x}\right)}_{\sop{v}}$ is a local ring of dimension one with fraction 
field $K$. The fact that $\mathcal{O}_{X,x}$ is integrally closed implies that 
${\left(\mathcal{O}_{X,x}\right)}_{\sop{v}}$ is also integrally closed, and therefore a 
valuation ring of $K$. Since it is dominated by $R_{\nu_1}$ and $R_{\nu_1'}$, we must have 
${\left(\mathcal{O}_{X,x}\right)}_{\sop{v}}=R_{\nu_1}=R_{\nu_1'}$, that is, $\nu_1=\nu_1'$. 
So we conclude that there exists $\mu\in\text{RZ}(X)\setminus\RZ{(X,x)}$ such that $\nu=
\mu\circ\overline{\nu}$ and $\nu'=\mu\circ\overline{\nu'}$. Now it suffices to observe 
that $\pi(\nu)=\pi(\nu')$ means that the valuations $\overline{\nu}$ and $\overline{\nu'}$ 
are the same when restricted to the subring 
$\mathcal{O}_{X,x}/(\mathcal{O}_{X,x}\cap m_\mu)$ of the residue field $k_\mu$. Thus 
$\overline{\nu}=\overline{\nu'}$ and then $\nu=\nu'$.
\end{proof}

 We give next a sketch of a proof that the map $\tilde\pi$ may not be injective in 
higher dimension. The following example was designed in collaboration with Charles Favre.

\begin{example}\label{ejdimtres}
 Consider $R=\mathbf{C}[x_1,x_2,x_3]_{(x_1,x_2,x_3)}$ and denote by $K$ its fraction field. 
Let $\mathbf 0$ be the origin in $\mathbf{A}_\mathbf{C}^3=\text{Spec}~\mathbf{C}[x_1,x_2,x_3]$.
Recall that by the order of a polynomial at $x_i$ we refer to the largest power of $x_i$ 
which divides the polynomial. We define a rank two valuation $\nu_2$ of $K$ by setting 
$$\nu_2(f)=(\nu_1(f),\text{ord}_{x_2}f_1(0,x_2,x_3))\in\mathbf{Z}^2_\text{lex}$$ 
for any nonzero $f\in\mathbf{C}[x_1,x_2,x_3]$, where $\nu_1(f)=\text{ord}_{x_1}f$ and 
$f_1=x_1^{-\nu_1(f)} f$. Similarly, we define $\nu'_1(f)=\text{ord}_{x_2}f$, 
$f_2=x_2^{-\nu'_1(f)} f$ and 
$$\nu'_2(f)=(\nu'_1(f),\text{ord}_{x_1}f_2(x_1,0,x_3))\in\mathbf{Z}^2_\text{lex}.$$ 
Observe that $\nu_2$ and $\nu'_2$ both have center in $R$ the prime ideal $(x_1,x_2)R$, 
and residue fields isomorphic to $\mathbf{C}(x_3)$. Let us denote $\overline{\nu}$ 
the $x_3$-adic valuation of $\mathbf{C}(x_3)$. Then we get two valuations of $K$ 
of rank three, say $\nu=\nu_2\circ\overline{\nu}$ and $\nu'=\nu'_2\circ\overline{\nu}$, 
whose center in $R$ is the maximal ideal of $R$. We have $\nu,\nu'\in
\RZ{(\mathbf{A}_\mathbf{C}^3,\mathbf 0)}$ and by construction $\pi(\nu)=\pi(\nu')$. Indeed, 
their image by $\pi$ is the semivaluation of $R$ which maps $f\in R$ to infinity if 
$f\in (x_1,x_2)R$, and otherwise to $\text{ord}_{x_3}f(0,0,x_3)$.

 Let $\psi:Y\to\mathbf{A}_\mathbf{C}^3$ be the blowing-up of $\mathbf{A}_\mathbf{C}^3$ 
centered at the line $\set{x_1=x_2=0}$ and let $E$ be the exceptional locus of $\psi$ with 
its reduced structure. Both centers $\cen_Y(\nu)$ and $\cen_Y(\nu')$ are closed points of 
$Y$. Indeed $\cen_Y(\nu)=\psi^{-1}(\mathbf 0)\cap\gamma$ and $\cen_Y(\nu')=
\psi^{-1}(\mathbf 0)\cap\gamma'$, where the curves $\gamma$ and $\gamma'$ are the centers 
of $\nu_2$ and $\nu_2'$ in $Y$ respectively. Let us explain briefly how the fact that 
$\gamma\cap\gamma'=\emptyset$ should imply that $\nu$ and $\nu'$ are not identified in the 
largest Hausdorff quotient $\text H$ of  $\RZ{(\mathbf{A}_\mathbf{C}^3,\mathbf 0)}$. Since 
this is not essential to this article, we shall only give a sketch of the proof, leaving 
the details to the interested reader.

Following \cite{Fan} we consider $\NL{(Y,E)}$, which is the following topological space: 
take $\spe_Y^{-1}(E)\setminus(Y^\beth\cap\imath^{-1}(E))$ with the topology induced from the 
topology of $Y^\beth$, and then its quotient space under the standard action of 
$\mathbf R_{\mec}$. The space of germs of formal curves centered at a point in $E$ 
is naturally included in $\NL{(Y,E)}$. 

Generalizing the construction of the map $\pi$ in Subsection~\ref{canonicalmap}, one can 
build a continuous map $f:\RZ{(\mathbf{A}_\mathbf{C}^3,\mathbf 0)}\to\NL{(Y,E)}$. One 
then checks that $\nu$ (resp. $\nu'$) is mapped under $f$ to the formal germ of curve associated 
to $\gamma$ (resp. $\gamma'$) at the point $\cen_Y(\nu)$ (resp. $\cen_Y(\nu')$). This implies 
that $f(\nu)\neq f(\nu')$, and since $\NL{(Y,E)}$ is Hausdorff, we conclude that the valuations 
$\nu$ and $\nu'$ give rise to different points in $\text H$.
\end{example}

The previous example suggests the following question.

\begin{question}
Is the space $\NL(X,x)$ the largest Hausdorff quotient of the projective limit 
$\varprojlim_p p^{-1}(x)$, where $p: Y \to X$ ranges over all proper birational 
morphisms that are \emph{isomorphisms} over $X\setminus\set{x}$?
\end{question}


\subsection{The core of NL\texorpdfstring{$\boldsymbol{(X,x)}$}{(X,x)}}

 Throughout this subsection we suppose that $X$ is a normal algebraic surface and $x$ 
is the only singular point of $X$.

 We say that a proper birational map $\pi_{X'}:X'\rightarrow X$ is a \emph{good resolution} 
if $X'$ is smooth and the exceptional locus $E_{X'}=\pi^{-1}_{X'}(x)_{\text{red}}$ is a divisor 
with normal crossing singularities such that its irreducible components are smooth and the 
intersection of any two of them is at most a point. 

 Since a good resolution $\pi_{X'}:X'\rightarrow X$ is proper and induces an isomorphism from 
the open subset $X'\setminus E_{X'}$ to the open subset $X\setminus\set{x}$, recall that any 
semivaluation of $\NL{(X,x)}$ admits a center in $X'$. Moreover, the map 
$\spe_{X'}:\NL{(X,x)}\to X'$ which sends any semivaluation to its center in $X'$ is anticontinuous.

 To any good resolution we attach a graph, its \emph{dual graph}, which is the dual graph 
$\Gamma_{E_{X'}}$ of $E_{X'}$ (for the definition, see Subsection~\ref{subsecgraphs}). We denote 
$\Gamma_{E_{X'}}$ by $\Gamma_{X'}$.

 The topological realization of any dual graph $\Gamma_{X'}$ can be embedded into 
$\NL{(X,x)}$ as a closed subspace by generalizing the construction given in the Step 2 of 
the proof of Proposition~\ref{dimNL}. We call $i_{X'}:|\Gamma_{X'}|\hookrightarrow\NL{(X,x)}$ 
this embedding and $\Sigma_{X'}$ the image of $|\Gamma_{X'}|$ under $i_{X'}$. Furthermore, 
we may define a natural continuous retraction map $\rec_{X'}:\NL{(X,x)}\rightarrow\esq$. One 
can show that there is actually a strong deformation retraction from $\NL{(X,x)}$ onto $\esq$, 
see \cite{Ber,Fan,Thu}. Keeping the notations introduced in the proof cited above, the retraction 
$\rec_{X'}$ is defined as follows:

 Given $\nu\in\NL{(X,x)}$, if $\nu\in\esq$ then we set $\rec_{X'}(\nu)=\nu$. Suppose that 
$\nu$ does not belong to $\esq$ and denote $p=\spe_{X'}(\nu)$. Observe that 
$p$ is a closed point of $X'$. If $p$ is the intersection of two irreducible components $E_1$ 
and $E_2$ of $E_{X'}$, we take local coordinates $(z_1,z_2)$ at $p$ such that $E_1=\set{z_1=0}$ 
and $E_2=\set{z_2=0}$ and we map $\nu$ to the unique valuation $\nu_t:=\nu_{(t/b_1,(1-t)/b_2)}
\in\esq$ such that $\nu_t(z_1)=\nu(z_1)$ and $\nu_t(z_2)=\nu(z_2)$. Otherwise the point $p$ 
belongs to a single irreducible component $E$ of $E_{X'}$ and we map $\nu$ to the normalized 
divisorial valuation defined by $E$.

 If $\pi_{X''}:X''\rightarrow X$ is a good resolution which dominates 
$\pi_{X'}:X'\rightarrow X$, then $\pi_{X''}$ is obtained by composing $\pi_{X'}$ with a finite 
sequence of point blowing-ups. Then $|\Gamma_{X'}|$ embeds in $|\Gamma_{X''}|$ and we have 
a natural continuous retraction $|\Gamma_{X''}|\to|\Gamma_{X'}|$ (see the discussion before 
Proposition~\ref{eqgraphs}). We have that $\Sigma_{X'}\subseteq\Sigma_{X''}$ and the restriction 
$\rec_{X',X''}$ of $\rec_{X'}$ to $\Sigma_{X''}$ is the natural continuous retraction we may 
consider from $\Sigma_{X''}$ to $\esq$. One can also show that these maps are compatible 
(i.e.\ $\rec_{X'}=\rec_{X',X''}\circ\rec_{X''}$) and the induced continuous mapping 
$\NL{(X,x)}\rightarrow\varprojlim\esq$ is a homeomorphism. For details, see \cite{MJ}.

 The following proposition is a consequence of results of \cite{VT} (see also 
\cite[Proposition~9.5 (i)]{Fan} and the tree structure of Berkovich discs in \cite[Chapter 4]{Ber}).

\begin{proposition}\label{closureUpisatree}
Let $\pi_{X'}:X'\rightarrow X$ be a good resolution and let $E$ be an irreducible component 
of the exceptional locus $E_{X'}$. For any closed point $p\in E$ which is a regular point 
of $E_{X'}$, the closure of $U(p)=\spe_{X'}^{-1}(p)$ in 
$\NL{(X,x)}$ is a tree whose boundary is reduced to the normalized divisorial valuation 
$\nu_E$ associated to $E$.
\end{proposition}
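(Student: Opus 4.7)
The plan is to identify the closure $\overline{U(p)}$ with a relative valuative tree on the completion of $\mathcal O_{X',p}$, and then to invoke the tree structure established in \cite{VT} and extended to arbitrary regular surface points in \cite{Gr}. The crucial preliminary observation is that, since $p$ is a regular point of $E_{X'}$, it lies on the single irreducible component $E$, so I may choose local parameters $(z_1,z_2)$ of $\mathcal O_{X',p}$ with $E=\set{z_1=0}$ locally, which yields $\widehat{\mathcal O}_{X',p}\cong k[[z_1,z_2]]$. Writing $b=\text{ord}_E(\mathfrak m_{X,x})$ for the multiplicity of $E$, we have $\mathfrak m_{X,x}\cdot\widehat{\mathcal O}_{X',p}=(z_1^b)$ up to a unit.

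First I will identify $U(p)\cup\set{\nu_E}$ with the relative valuative tree. A semivaluation $\nu\in\NL(X,x)$ belongs to $U(p)$ precisely when it extends to a centered semivaluation of $\mathcal O_{X',p}$ with center $\mathfrak m_{X',p}$. Applying the argument of Proposition~\ref{extsemi} to the noetherian local ring $\mathcal O_{X',p}$, this extension corresponds uniquely to a centered semivaluation $\hat\nu$ of $k[[z_1,z_2]]$ centered at its maximal ideal. The normalization $\nu(\mathfrak m_{X,x})=1$ translates into $\hat\nu(z_1)=1/b$, while the divisorial valuation $\nu_E$ fits into the picture as (a rescaling of) $\text{ord}_{z_1}$. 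Altogether $U(p)\cup\set{\nu_E}$ is in natural bijection with (a rescaling of) the relative valuative tree $\Val_{z_1}$ of \cite[Section~3.9]{VT}, which carries a tree structure in the sense of Subsection~\ref{subsectrees} by \cite{VT,Gr}.

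Next I will upgrade this bijection to a homeomorphism. Since restriction of semivaluations from $k[[z_1,z_2]]$ down to $\mathcal O_{X,x}$ trivially preserves pointwise convergence, the map $\Val_{z_1}\to U(p)\cup\set{\nu_E}$ is continuous; and $\Val_{z_1}$ is compact (being homeomorphic to the compact valuative tree $\Val$ via \cite[Lemma~3.59]{VT}), so the map is a homeomorphism onto its image in the Hausdorff space $\NL(X,x)$, and $U(p)\cup\set{\nu_E}$ is closed. Thus $\overline{U(p)}\subseteq U(p)\cup\set{\nu_E}$. To obtain the reverse inclusion I will exhibit explicit approximations of $\nu_E$ by elements of $U(p)$: the monomial valuations $\mu_s$ on $k[[z_1,z_2]]$ defined by $\mu_s(z_1)=1/b$ and $\mu_s(z_2)=s>0$, once suitably renormalized, lie in $U(p)$, and for any $f\in\mathcal O_{X,x}$ factoring out the maximal power of $z_1$ dividing $f$ in $k[[z_1,z_2]]$ yields $\mu_s(f)\to\nu_E(f)$ as $s\to0^+$. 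Since $U(p)$ is open in $\NL(X,x)$ by anticontinuity of $\spe_{X'}$, the topological boundary $\overline{U(p)}\setminus U(p)$ is then exactly $\set{\nu_E}$.

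The main obstacle I anticipate is the first step: matching topologies and reconciling normalizations when passing from $\mathcal O_{X,x}$ to the completion at $p$, since the natural normalization on $U(p)$ inherited from $\NL(X,x)$ is $\nu(\mathfrak m_{X,x})=1$ rather than the more convenient $\hat\nu(\mathfrak m_{X',p})=1$, and the boundary point $\nu_E$ (whose center in $X'$ is the generic point of $E$, not $p$) must be fit into the identification. Once this first step is in place, the tree structure and the boundary computation follow directly from the results recalled above.
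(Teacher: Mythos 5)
Your overall strategy — pass to the completion $\widehat{\mathcal O}_{X',p}\cong k[[z_1,z_2]]$, identify $\overline{U(p)}$ with the relative valuative tree $\Val_{z_1}$ of \cite[Section~3.9]{VT}, and conclude via the tree structure established there — is the same as the paper's. The boundary argument by explicit monomial approximation of $\nu_E$ is a reasonable supplement (the paper simply asserts $\overline{U}(p)=U(p)\sqcup\{\nu_E\}$ without proof).

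There is, however, a genuine gap at the ``crucial preliminary observation.'' You assert $\mathfrak m_{X,x}\cdot\widehat{\mathcal O}_{X',p}=(z_1^b)$ up to a unit, but this is false in general: a good resolution is not assumed to factor through the normalized blowing-up of $x$, so the pull-back of $\mathfrak m_{X,x}$ to $X'$ need not be locally principal — it may acquire an embedded (zero-dimensional) component. The paper handles this by writing $\mathfrak m_{X,x}\mathcal O_{X',p}=(z)^{b_E}\cdot\mathcal I_p$, where $\mathcal I_p$ is either an $\mathfrak m_{X',p}$-primary ideal or all of $\mathcal O_{X',p}$. Under this correction, the normalization $\nu(\mathfrak m_{X,x})=1$ translates to $b_E\,\nu(z)+\nu(\widehat{\mathcal I}_p)=1$, not to $\hat\nu(z_1)=1/b$ as you claim; the resulting identification $\overline{U}(p)\to\Val_z$ is still $\nu\mapsto\nu/\nu(z)$ (and $\nu_E\mapsto\operatorname{ord}_z$), but it is a pointwise rescaling rather than a constant one. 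One also needs the observation that $\nu(z)>0$ for every $\nu\in U(p)$, which uses that $z^n\in\mathcal I_p$ for some $n$. Your anticipated ``reconciling normalizations'' difficulty is real, but the specific source is the possible non-principality of the pulled-back maximal ideal, which you did not identify. Once $\mathcal I_p$ is accounted for, your remaining topological arguments (continuity in both directions via compactness and Hausdorffness, openness of $U(p)$ via anticontinuity of $\spe_{X'}$, and the monomial-valuation approximation of $\nu_E$) are sound, modulo adjusting the renormalization constants to include $\nu(\widehat{\mathcal I}_p)$.
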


\begin{proof}
The closure $\overline{U}(p)$ of $U(p)$ in $\NL{(X,x)}$ equals $U(p)\bigsqcup\set{\nu_E}$ 
and the boundary of $\overline{U}(p)$ is reduced to the semivaluation $\nu_E$. We now 
prove that $\overline{U}(p)$ is a tree.

 First of all observe that we are not assuming that $\pi_{X'}$ factors through the 
normalized blowing-up of $x\in X$, so we could have some embedded components. Therefore we 
write the pull-back of the coherent sheaf of ideals $\frak m$ of $\mathcal{O}_{X}$ defining 
the point $x$ as 
$\mathcal{O}_{X'}(-C)\otimes_{\mathcal{O}_{X'}}\mathcal{I}$, where $C$ is a divisor on $X'$ 
with $\text{supp }C=E_{X'}$ and $\mathcal{I}$ is a coherent sheaf of ideals in $\mathcal{O}_{X'}$ 
with finite co-support. Choose local coordinates $(z,z')$ at $p$ such that $E=\set{z=0}$. 
The ideal $\mathcal{I}_p$ of $\mathcal{O}_{X',p}$ is either a primary ideal or the ring 
$\mathcal{O}_{X',p}$.

 Suppose that $\mathcal{I}_p$ is a primary ideal of $\mathcal{O}_{X',p}$. Then 
$\frak m_{X,x}\mathcal{O}_{X',p}=(z)^{b_E}\cdot\mathcal{I}_p$. Hence in the open subset 
$U(p)$ the normalization $\nu(\frak m_{X,x})=1$ translates into $\nu(z)b_E+\nu(\mathcal{I}_p)=1$. 
We denote by $\widehat{\mathcal{I}}_p$ the extension of $\mathcal{I}_p$ in $k[[z,z']]$. 
Passing to the completion, we can identify $U(p)$ with the subspace of $\mathcal{V}(k[[z,z']])$ 
consisting of all semivaluations $\nu:k[[z,z']]\rightarrow[0,+\infty]$ whose restriction to $k^*$ 
is trivial, which are centered in the maximal ideal $(z,z')$ and such that 
$\nu(z)b_E+\nu(\widehat{\mathcal{I}}_p)=1$. If $\nu(z)=0$ for some $\nu\in U(p)$, since there 
exists $n\geq1$ such that $z^n\in\mathcal{I}_p$ we would get $0=\nu(z^n)\geq\nu(\mathcal{I}_p)=1$, 
which is a contradiction. Therefore $\nu(z)>0$ for all $\nu\in U(p)$ and we have a well defined 
map $\varphi$ from $\overline{U}(p)$ to the relative valuative tree $\mathcal{V}_z$. It suffices 
to define $\varphi(\nu_E)=\text{ord}_z$ and 
\[\varphi(\nu)=\frac{b_E\:\nu}{1-\nu(\widehat{\mathcal{I}}_p)}=\frac{\nu}{\nu(z)}\] 
for any $\nu\in U(p)$. In the case where $\mathcal{I}_p$ is the ring $\mathcal{O}_{X',p}$, we 
have $\nu(\mathcal{I}_p)=0$ and we may consider $\varphi:\overline{U}(p)\to\mathcal{V}_z$ defined 
exactly as before. We claim that $\varphi$ is a homeomorphism. Indeed, the map from $\mathcal{V}_z$ 
to $\overline{U}(p)$ which sends $\text{ord}_z$ to $\nu_E$ and 
$\nu\in\mathcal{V}_z\setminus\set{\text{ord}_z}$ to $\frac{\nu}{\nu(\frak m_{X,x})}$ is the 
inverse map of $\varphi$. Since $\NL{(X,x)}$ and $\mathcal{V}_z$ are both endowed with the 
topology of pointwise convergence, it is straightforward to verify that they are both continuous 
maps.

 According to \cite[Proposition~3.6.1]{VT}, $\mathcal{V}_z$ is a tree rooted at 
$\text{ord}_z$. From this fact and the existence of $\varphi$ we deduce that $\overline{U}(p)$ 
is a tree (rooted at $\nu_E$) and this finishes the proof.
\end{proof}

The key observation is the following:

\begin{proposition}\label{fibraretrac}
Let $\pi_{X'}:X'\rightarrow X$ be a good resolution. Any fiber $\rec_{X'}^{-1}(\nu)$ 
of the natural retraction 
$\rec_{X'}:\NL{(X,x)}\to\esq$ is a tree whose boundary is reduced to the semivaluation $\nu$.
\end{proposition}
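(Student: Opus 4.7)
The plan is to split the analysis according to the location of $\nu$ in $\esq$. Via the homeomorphism $|\Gamma_{X'}|\cong\esq$, either (A) $\nu=\nu_E$ is the image of a vertex of $\Gamma_{X'}$, i.e.\ a divisorial valuation attached to an irreducible component $E$ of $E_{X'}$, or (B) $\nu$ is the image of an interior point of an edge of $\Gamma_{X'}$ joining two vertices $\nu_{E_1}$, $\nu_{E_2}$, with $p=E_1\cap E_2$ a satellite point of $E_{X'}$.

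In case (A), the definition of $\rec_{X'}$ gives
\[
\rec_{X'}^{-1}(\nu_E)=\{\nu_E\}\cup\bigcup_{p}U(p),
\]
where the union runs over the (infinitely many) closed points of $E$ that are regular points of $E_{X'}$. By Proposition~\ref{closureUpisatree} each $\overline{U(p)}$ is a tree rooted at $\nu_E$ with boundary $\{\nu_E\}$. I would equip the fiber with the partial order that agrees with the natural order on each $\overline{U(p)}$ (all rooted at the common vertex $\nu_E$), verify the tree axioms directly (in particular, the infimum of any two points from distinct $\overline{U(p)}$ and $\overline{U(p')}$ is $\nu_E$; a convex totally ordered subset lies in some $\overline{U(p)}$), and then check that the weak tree topology agrees with the topology induced from $\NL{(X,x)}$. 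The only delicate point is the comparison of open neighborhoods at $\nu_E$, because infinitely many branches meet there; away from $\nu_E$ the fiber decomposes as a disjoint union of the open sets $U(p)$, so the topologies match trivially.

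In case (B), the fiber is contained in $\overline{U(p)}$, and the first step is to show that $\overline{U(p)}$ is itself a tree, in analogy with Proposition~\ref{closureUpisatree}. I would choose local coordinates $(z_1,z_2)$ at $p$ with $E_i=\{z_i=0\}$, pass to the completion $k[[z_1,z_2]]$, and identify $\overline{U(p)}$ with the affine slice of $\Val(k[[z_1,z_2]])$ defined by the normalization $b_{E_1}\mu(z_1)+b_{E_2}\mu(z_2)+\mu(\widehat{\mathcal I}_p)=1$ inherited from the pullback of $\frak m_{X,x}$, where $\widehat{\mathcal I}_p$ is the ideal introduced in the proof of Proposition~\ref{closureUpisatree}. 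Using the tree structure on the valuative tree established in \cite{VT}, this slice is a tree, and its two boundary points are precisely the coordinate monomial valuations realizing $\nu_{E_1}$ and $\nu_{E_2}$. Under this identification the restriction of $\rec_{X'}$ is the natural retraction of $\overline{U(p)}$ onto the segment $[\nu_{E_1},\nu_{E_2}]$; its fiber over the interior point $\nu$ is arcwise connected (any two points are joined through $\nu$), and hence is a tree by Corollary~\ref{subtree}, with boundary reduced to $\{\nu\}$.

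The main obstacle will be case (B), namely establishing the tree structure of $\overline{U(p)}$ at a satellite point, since Proposition~\ref{closureUpisatree} only treats regular points of $E_{X'}$. The argument closely parallels the one there but requires carefully handling the possible embedded component in the pullback of $\frak m_{X,x}$ and identifying the two boundary points of the slice with the normalized divisorial valuations $\nu_{E_1}$ and $\nu_{E_2}$. Once this is done, the conclusion in both cases reduces to standard manipulations with trees.
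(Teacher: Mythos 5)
Your Case A (vertex case) matches the paper's treatment essentially verbatim: the fiber over $\nu_E$ is a bouquet of the trees $\overline{U(p)}$ (for $p$ ranging over the regular points of $E_{X'}$ on $E$), glued along the common root $\nu_E$, and one checks the tree axioms and the agreement of the weak tree topology with the induced topology.

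Your Case B (interior edge point) takes a genuinely different route from the paper, and it is where the substance lies. You propose to prove an analogue of Proposition~\ref{closureUpisatree} at a \emph{satellite} point $p=E_1\cap E_2$, and then realize $\rec_{X'}$ on $\overline{U(p)}$ as the standard tree retraction onto the segment $[\nu_{E_1},\nu_{E_2}]$. The paper deliberately sidesteps both of these steps. It splits Case B according to whether $\nu$ is quasi-monomial of rational rank two (the ``irrational'' interior points) or divisorial (the ``rational'' interior points). In the first sub-case it uses the fact that $\NL{(X,x)}\cong\varprojlim\Sigma_{X''}$ and that a non-divisorial interior point has $\rec_{X',X''}^{-1}(\nu)=\set{\nu}$ for every dominating good resolution $X''$, so the fiber is the singleton $\set{\nu}$ — no local valuative-tree analysis is needed at all. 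In the second sub-case it performs finitely many satellite point blow-ups so that $\nu$ becomes the vertex valuation of the new resolution $X''$, observes $\rec_{X'}^{-1}(\nu)=\rec_{X''}^{-1}(\nu)$, and reduces to Case A. This is cleaner because it never needs Proposition~\ref{closureUpisatree} at satellite points.

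Your proposal for Case B leaves two real gaps. First, you acknowledge but do not resolve the extension of Proposition~\ref{closureUpisatree} to satellite points. This is not a routine rerun of the published argument: the normalization functional $b_{E_1}\mu(z_1)+b_{E_2}\mu(z_2)+\mu(\widehat{\mathcal I}_p)=1$ does not directly identify $\overline{U}(p)$ with a relative valuative tree $\mathcal{V}_z$ (the paper's argument hinges precisely on a \emph{single} component through $p$ providing the coordinate $z$), and one must argue separately that this slice is a tree, that its two boundary points in $\NL{(X,x)}$ are the two normalized divisorial valuations, and that the embedding into $\NL{(X,x)}$ is a homeomorphism onto a closed subset. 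Second, and unaddressed in your sketch, is the claim that $\rec_{X'}$ restricted to $\overline{U(p)}$ coincides with the tree retraction onto $[\nu_{E_1},\nu_{E_2}]$; the retraction in the paper is defined by a monomial-matching condition $\nu_t(z_1)=\mu(z_1)$, $\nu_t(z_2)=\mu(z_2)$, and identifying this with the order-theoretic tree retraction is a nontrivial compatibility statement requiring its own proof. Without both of these, the conclusion (the fiber is a subtree with boundary $\set{\nu}$) does not follow from your outline, whereas the paper's route delivers it with the tools it has already established.
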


\begin{proof}
Let $\nu$ be a semivaluation in $\esq$. Assume first that $\nu$ is the image under $i_{X'}$ 
of a vertex of $\Gamma_{X'}$ and denote by $E$ the irreducible component of the 
exceptional locus $E_{X'}$ which determines $\nu$. Consider the set $F$ of all closed 
points $p\in E$ which are not singular points of $E_{X'}$. Then 
$\rec_{X'}^{-1}(\nu)=\set{\nu}\bigsqcup_{p\in F} U(p)$, where $U(p)$ is the open subset 
of $\NL{(X,x)}$ of semivaluations whose center in $X'$ is $p$. By 
Proposition~\ref{closureUpisatree}, $\rec_{X'}^{-1}(\nu)$ is the union of the trees 
$\overline{U}(p)=U(p)\sqcup\set{\nu}$, glued along their root $\nu=\nu_E$. In fact, 
the fiber of $\rec_{X'}$ above $\nu$ is itself a 
tree, as we explain next.

 Take $\mu,\mu'\in \rec_{X'}^{-1}(\nu)$. Abusing notation, we declare $\mu\leq\mu'$ 
if there exists $p\in F$ such that $\mu,\mu'\in\overline{U}(p)$ and $\mu\leq\mu'$ in 
$\overline{U}(p)$. This defines a tree structure on $\rec_{X'}^{-1}(\nu)$. Moreover, 
the topology of $\rec_{X'}^{-1}(\nu)$ as subspace of $\NL{(X,x)}$ coincides with the 
weak tree topology induced by $\leq$.

 Suppose now that $\nu=i_{X'}(z)$ where $z$ belongs to the interior of an edge $|e|$ of 
$|\Gamma_{X'}|$. Assume first that $z$ corresponds to an irrational number in the real interval 
$(0,1)$. Then $\nu$ is a quasi-monomial valuation of rational rank two. In particular, 
$\nu$ is not divisorial. As a consequence, 
$\rec_{X',X''}^{-1}(\nu)$ is reduced to $\nu$ and 
$\rec_{X'}^{-1}(\nu)=\rec_{X''}^{-1}(\nu)$ for any  good resolution $\pi_{X''}:X''\to X$ 
dominating $\pi_{X'}$. Since an element of $\NL{(X,x)}$ is determined by its images by 
the retraction maps, we conclude that $\rec_{X'}^{-1}(\nu)=\set{\nu}$. The statement 
is true in this case.

Finally, if $z$ corresponds to a rational number in $(0,1)$ then $\nu$ is a quasi-monomial 
valuation of rational rank one. This means that $\nu$ is a divisorial valuation. Hence there 
exists a finite sequence of blowing-up of points $\pi:X''\rightarrow X'$ such that 
$\pi_{X''}=\pi_{X'}\circ\pi$ is a good resolution and $\spe_{X''}(\nu)$ is a prime 
divisor. Indeed, it is enough to blow-up recursively the center of $\nu$. Observe that all 
the centers of blowing-up are satellite points (see Section~\ref{subsecgraphs}). Otherwise 
$|e|$ viewed in $|\Gamma_{X''}|$ would not be homeomorphic to $(0,1)$. We have that 
$\rec_{X''}^{-1}(\nu)=\rec_{X'}^{-1}(\nu)$, and thus we have reduced the problem to the first 
case we treated above.
\end{proof}

\begin{corollary}\label{caractree}
The normalized non-Archimedean link $\NL{(X,x)}$ is a tree if and only if the dual graph 
associated to any good resolution is a tree.
\end{corollary}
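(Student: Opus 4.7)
The plan is to exploit the retraction $\rec_{X'}:\NL{(X,x)}\to\esq$ together with the description of its fibers given by Proposition~\ref{fibraretrac}, and to note first that by Proposition~\ref{eqgraphs} together with the fact that the core of a graph is empty iff the graph is a tree, the property ``$\Gamma_{X'}$ is a tree'' does not depend on the choice of good resolution; so we may freely shift between ``for some'' and ``for every'' good resolution.

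For the implication ``$\NL{(X,x)}$ is a tree $\Rightarrow$ $\Gamma_{X'}$ is a tree'': the subspace $\esq=i_{X'}(|\Gamma_{X'}|)$ is arcwise connected because $|\Gamma_{X'}|$ is the topological realization of a finite connected graph, which is path-connected. Corollary~\ref{subtree} then yields that $\esq$, and hence $|\Gamma_{X'}|$, is a tree. If $\Gamma_{X'}$ were not a graph-theoretic tree it would contain a reduced cycle, so $|\Gamma_{X'}|$ would contain a subspace homeomorphic to $\mathbf{S}^1$, which is impossible inside a tree by Lemma~\ref{uniqarc} (the circle admits two distinct injective continuous paths between any two of its points).

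For the converse, assume $\Gamma_{X'}$ is a tree, so $\esq\cong|\Gamma_{X'}|$ carries a tree structure. Fix a root $\nu_0\in\esq$ and let $\leq_\Sigma$ be the corresponding partial order. For every $\sigma\in\esq$, Proposition~\ref{fibraretrac} equips $\rec_{X'}^{-1}(\sigma)$ with a tree order $\leq_\sigma$ rooted at $\sigma$. Define a partial order on $\NL{(X,x)}$ by gluing these trees along $\esq$: for $\mu\in\NL{(X,x)}$, let $[\nu_0,\mu]$ be the concatenation of the $\leq_\Sigma$-segment from $\nu_0$ to $\rec_{X'}(\mu)$ with the $\leq_{\rec_{X'}(\mu)}$-segment from $\rec_{X'}(\mu)$ to $\mu$, and declare $\mu\leq\mu'$ iff $[\nu_0,\mu]\subseteq[\nu_0,\mu']$. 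Checking the axioms of a rooted non-metric $\mathbf{R}$-tree splits into a routine case analysis according to whether $\rec_{X'}(\mu)$ and $\rec_{X'}(\mu')$ are $\leq_\Sigma$-comparable; in each case the initial segments, totally ordered convex subsets, and infima reduce to the corresponding notions in the trees $\esq$ and $\rec_{X'}^{-1}(\sigma)$.

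The main obstacle will be to verify that the weak tree topology induced by $\leq$ coincides with the topology of $\NL{(X,x)}$. The strategy is to treat basic subsets $U_\tau(\tau')$ separately according to the position of $\tau$: if $\tau\notin\esq$ then $U_\tau(\tau')$ lies entirely in one of the fiber-trees of $\rec_{X'}$, whose own tree topology already agrees with the subspace topology by Proposition~\ref{closureUpisatree} (via $\mathcal{V}_z$); if $\tau\in\esq$, then $U_\tau(\tau')$ is either contained in a fiber or can be written using continuity of $\rec_{X'}$ as the preimage of a tangent neighborhood in $\esq$ together with whole fibers above it, both of which are open in $\NL{(X,x)}$. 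The converse comparison is obtained by refining a neighborhood of $\mu\in\NL{(X,x)}$ using the homeomorphism $\NL{(X,x)}\cong\varprojlim\Sigma_{X''}$ and passing to a good resolution $X''$ dominating $X'$ in which $\rec_{X''}(\mu)$ lies on an edge of $|\Gamma_{X''}|$; in such a resolution $\Gamma_{X''}$ remains a tree, and the tangent neighborhoods of $\rec_{X''}(\mu)$ in $\Sigma_{X''}$ (pulled back by $\rec_{X''}$) provide the required weak-tree-open neighborhoods of $\mu$.
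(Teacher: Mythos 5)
Your approach matches the paper's: the forward implication applies Corollary~\ref{subtree} to the arcwise connected subspace $\esq$, and the converse glues the tree order on $\esq\cong|\Gamma_{X'}|$ with the tree orders on the fibers $\rec_{X'}^{-1}(\sigma)$ supplied by Proposition~\ref{fibraretrac}; your order $\mu\le\mu'\Leftrightarrow[\nu_0,\mu]\subseteq[\nu_0,\mu']$ unwinds exactly to the three-case definition the paper uses. You also make explicit two points the paper leaves tacit: that ``$\Gamma_{X'}$ is a tree'' is independent of the choice of good resolution, and that a topological tree $|\Gamma_{X'}|$ forces the combinatorial graph $\Gamma_{X'}$ to be a tree because Lemma~\ref{uniqarc} rules out an embedded circle.

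The paper simply asserts that the weak tree topology agrees with the topology of $\NL{(X,x)}$; your attempted verification of this contains a gap. The claim that ``if $\tau\notin\esq$ then $U_\tau(\tau')$ lies entirely in one of the fiber-trees'' is false when $\tau'$ determines the tangent direction at $\tau$ pointing toward the root $\sigma_0=\rec_{X'}(\tau)$ of its fiber: in that case $U_\tau(\tau')$ contains $\sigma_0$ and everything outside $\rec_{X'}^{-1}(\sigma_0)$, so it is not contained in any fiber-tree. The step is repairable --- for such $\tau'$ the complement of $U_\tau(\tau')$ is $\set{\sigma\in\rec_{X'}^{-1}(\sigma_0)\,:\,\sigma\ge\tau}$ in the fiber's tree order, which is closed in the fiber (its complement there is the tangent component of $\sigma_0$ at $\tau$) and hence closed in $\NL{(X,x)}$ since the fiber is closed --- but as written the case analysis is incomplete and the stated reduction fails in this case.
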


\begin{proof}
Let $\pi_{X'}:X'\rightarrow X$ be a good resolution. Since $\esq\subseteq\NL{(X,x)}$ 
is arcwise connected, if $\NL{(X,x)}$ is a tree then $\esq$ is also a tree by 
Corollary~\ref{subtree}. Let us now show the converse.

 Assume that the dual graph associated to any good resolution is a tree. Take a dual 
graph $\Gamma_{X'}$ and choose a tree structure $(|\Gamma_{X'}|,\leq)$ of $|\Gamma_{X'}|$. 
Proposition~\ref{fibraretrac} allow us to equip $\NL{(X,x)}$ with a tree structure as 
follows. Given two semivaluations $\nu,\nu'$ in $\NL{(X,x)}$ we declare 
$\nu\leq\nu'$ if and only if one of the following conditions is satisfied, 

\begin{enumerate}[$\bullet$]
\item $\nu,\nu'\in\esq$ and $i_{X'}^{-1}(\nu)\leq i_{X'}^{-1}(\nu')$ in $|\Gamma_{X'}|$; 
\item $\nu\in\esq$, $\nu'\notin\esq$ and $i_{X'}^{-1}(\nu)\leq i_{X'}^{-1}(\rec_{X'}(\nu'))$ 
in $|\Gamma_{X'}|$; 
\item $\nu,\nu'\notin\esq$, $\rec_{X'}(\nu)=\rec_{X'}(\nu')$ and 
$\nu\leq\nu'$ in $\rec_{X'}^{-1}(\mu)$ where $\mu=\rec_{X'}(\nu)$ (recall that $\rec_{X'}^{-1}(\mu)$ 
is a tree rooted at $\mu$ by Proposition~\ref{fibraretrac}).
\end{enumerate}

 One can verify that $(\NL{(X,x)},\leq)$ satisfies the four axioms of a tree and 
that the usual topology of $\NL{(X,x)}$ is the weak tree topology induced by $\leq$.
\end{proof}

 We define the \emph{core} of $\NL{(X,x)}$ in a way analogous what to we did for 
graphs (see the topological characterization given in Lemma~\ref{coreversiontop}). In 
\cite[p.~76]{Ber} the core is referred to as the skeleton.

\begin{definition}
The core of the normalized non-Archimedean link $\NL{(X,x)}$ of $x$ in $X$ is the set of all 
semivaluations in $\NL{(X,x)}$ which do not admit a proper open neighborhood whose closure 
is a tree and whose boundary is reduced to a single semivaluation of $\NL{(X,x)}$. We denote 
it $\C{\NL{(X,x)}}$.
\end{definition}

 Observe that by definition $\C{\NL{(X,x)}}$ is empty if and only if $\NL{(X,x)}$ 
is a tree. 

\begin{lemma}\label{coreingraph}
If $\pi_{X'}:X'\rightarrow X$ is a good resolution, then $\C{\NL{(X,x)}}\subseteq\esq$.
\end{lemma}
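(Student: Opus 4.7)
I will argue the contrapositive: given $\nu \in \NL{(X,x)} \setminus \esq$, I will construct a proper open neighborhood $U$ of $\nu$ whose closure is a tree with boundary reduced to a single semivaluation. The preliminary observation, as in the proof of Proposition~\ref{fibraretrac}, is that $\nu \notin \esq$ forces $p := \spe_{X'}(\nu)$ to be a closed point of $E_{X'}$; otherwise $p$ would be the generic point of some irreducible component $E$ of $E_{X'}$ and one would have $\nu = \nu_E \in \esq$.

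The strategy is to find a good resolution $\pi_{X''}:X''\to X$ dominating $\pi_{X'}$ for which $p'' := \spe_{X''}(\nu)$ is a \emph{free} closed point of $X''$, lying on a unique irreducible component $E''$ of $E_{X''}$. Granting such an $X''$, I take $U := \spe_{X''}^{-1}(\{p''\})$; the anticontinuity of $\spe_{X''}$ makes $U$ open in $\NL{(X,x)}$, $\nu \in U$ by construction, and Proposition~\ref{closureUpisatree} yields that $\overline{U} = U \cup \{\nu_{E''}\}$ is a tree with boundary the single semivaluation $\nu_{E''}$. Since $\nu_{E''} \notin U$, the inclusion $U \subsetneq \NL{(X,x)}$ is proper, hence $\nu$ is not in the core. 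I construct $X''$ by iterated blowups: starting from $X^{(0)} = X'$ and $p^{(0)} = p$, whenever $p^{(n)}$ is a satellite point I let $X^{(n+1)} \to X^{(n)}$ be its blowup (which preserves the good-resolution property, since blowing up a satellite crossing of an snc divisor remains snc) and pass to $p^{(n+1)} := \spe_{X^{(n+1)}}(\nu)$; as soon as some $p^{(n)}$ is a free closed point I set $X'' = X^{(n)}$.

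The main obstacle is to show that this iteration terminates. Working in local coordinates $(z_1^{(n)}, z_2^{(n)})$ at the satellite point $p^{(n)} = E_1^{(n)} \cap E_2^{(n)}$ adapted to the two components, the standard blowup chart rule (assuming $\nu(z_1^{(n)}) \leq \nu(z_2^{(n)})$) gives $z_2^{(n+1)} = z_2^{(n)}/z_1^{(n)}$, so $\nu(z_2^{(n+1)}) = \nu(z_2^{(n)}) - \nu(z_1^{(n)})$. The new center remains satellite precisely when $\nu(z_1^{(n)}) \neq \nu(z_2^{(n)})$ strictly; when equality holds, either $p^{(n+1)}$ is a free closed point (using $k = \bar k$ to reduce $z_2^{(n+1)}$ modulo $\nu$ to an element of $k^*$, which must be nonzero because $\nu(z_2^{(n+1)}) = 0$), or $\nu$ is the divisorial valuation $\nu_{E_0^{(n+1)}}$ of the newly created exceptional. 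The latter alternative is excluded: expressing $(z_1^{(n)}, z_2^{(n)})$ as Laurent monomials in $(z_1, z_2)$ with unimodular exponent matrix shows that $\nu_{E_0^{(n+1)}}$ is a monomial valuation in $(z_1, z_2)$ with positive integer values, whose normalization by $\nu(\mathfrak m_{X,x})$ produces a point $(\beta_1, \beta_2)$ satisfying $b_1 \beta_1 + b_2 \beta_2 = 1$ on the edge of $\Gamma_{X'}$ corresponding to $p$, placing $\nu$ in $\esq$ and contradicting our hypothesis. If the iteration never terminates, then $\nu(z_1^{(n)}) \neq \nu(z_2^{(n)})$ for every $n$, and the Euclidean alternation of these values forces $\nu(z_1)/\nu(z_2)$ to be irrational; the linear forms $i\,\nu(z_1) + j\,\nu(z_2)$ are then pairwise distinct over $(i,j) \in \mathbf Z_{\geq 0}^2$, so by passing to the completion $\widehat{\mathcal O}_{X',p} = k[[z_1,z_2]]$ via Proposition~\ref{extsemi} the value $\nu(f)$ of any $f = \sum c_{ij} z_1^i z_2^j$ is uniquely minimized, forcing $\nu$ to coincide with the monomial valuation $\nu_{(\nu(z_1), \nu(z_2))}$; this monomial valuation again normalizes to a point on the edge of $\Gamma_{X'}$ at $p$, yielding the same contradiction $\nu \in \esq$. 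Hence termination holds and the argument is complete.
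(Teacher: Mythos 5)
Your proof is correct, but it follows a genuinely different route from the paper's. The paper's argument is a two-line application of Proposition~\ref{fibraretrac}: since $\nu\notin\esq$, the point $\mu=\rec_{X'}(\nu)$ differs from $\nu$, and $\rec_{X'}^{-1}(\mu)\setminus\set{\mu}$ is already a proper open neighborhood of $\nu$ whose closure $\rec_{X'}^{-1}(\mu)$ is a tree with boundary reduced to $\mu$. You avoid the retraction map altogether: you pass to a higher good resolution $X''$ on which the center of $\nu$ becomes a free closed point and apply Proposition~\ref{closureUpisatree} to $U=\spe_{X''}^{-1}(p'')$. The price is the termination argument for the sequence of point blow-ups, which is essentially a re-proof of the dichotomy (divisorial, versus irrational quasi-monomial, versus eventually free) that the paper has already packaged into the proof of Proposition~\ref{fibraretrac}; your Euclidean analysis of $(\nu(z_1^{(n)}),\nu(z_2^{(n)}))$ and the unique-minimum argument for an irrational weight ratio are exactly the computations hidden there. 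Both arguments are sound; yours is more explicit and self-contained relative to Proposition~\ref{closureUpisatree}, while the paper's is shorter because the hard work was front-loaded. Two details worth making explicit in your write-up: evaluating $\nu(z_i^{(n)})$ requires extending $\nu$ to $\widehat{\mathcal O}_{X^{(n)},p^{(n)}}$, i.e.\ the analogue of Proposition~\ref{extsemi} at the regular point $p^{(n)}$ rather than at $x$ (the paper performs the same passage in the proof of Proposition~\ref{closureUpisatree}); and the fact that every normalized monomial valuation at a satellite point lies on the corresponding edge of $\esq$ --- which is what makes both of your contradictions land in $\esq$ --- is precisely how the paper defines $i_{X'}$ and $\rec_{X'}$, so it is legitimate to cite rather than recompute the normalization $b_1\beta_1+b_2\beta_2=1$.
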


\begin{proof}
Take $\nu\in\NL{(X,x)}$ and suppose that $\nu\notin\esq$. Then $\rec_{X'}(\nu)$ is 
different from $\nu$. Set $\mu=\rec_{X'}(\nu)$. In view of Proposition~\ref{fibraretrac}, 
$\rec_{X'}^{-1}(\mu)\setminus\set{\mu}$ is an open neighborhood of $\nu$ such that its closure 
$\rec_{X'}^{-1}(\mu)$ is a tree and its boundary is reduced to $\mu$. This means that 
$\nu\notin\C{\NL{(X,x)}}$.
\end{proof}

 However one can be more specific:

\begin{proposition}\label{coresareequal}
Let $\pi_{X'}:X'\rightarrow X$ be a good resolution. The core of $\NL{(X,x)}$ and 
$i_{X'}(|\C{\Gamma_{X'}}|)$ coincide as subspaces of $\NL{(X,x)}$. 
\end{proposition}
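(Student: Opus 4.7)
The plan is to prove both inclusions separately.

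For $\C{\NL{(X,x)}} \subseteq i_{X'}(|\C{\Gamma_{X'}}|)$ I argue by contraposition. Lemma~\ref{coreingraph} reduces the problem to $\nu = i_{X'}(p)$ with $p \notin |\C{\Gamma_{X'}}|$, and Lemma~\ref{coreversiontop} then produces a proper open neighborhood $U \subset |\Gamma_{X'}|$ of $p$ whose closure equals $|\Gamma'|$ for a tree subgraph $\Gamma' \subseteq \Gamma_{X'}$, and whose boundary is a single vertex $v$ of $\Gamma'$. I will set $W := \rec_{X'}^{-1}(i_{X'}(U))$; this is open in $\NL{(X,x)}$ by continuity of $\rec_{X'}$, and $\nu \in W$ since $\rec_{X'}$ is the identity on $\esq$. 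The key point is to check $\overline W = W \cup \{\nu_v\}$ with $\nu_v := i_{X'}(v)$, and that $\overline W$ carries a tree structure. For the closure: continuity gives $\overline W \subseteq \rec_{X'}^{-1}(i_{X'}(|\Gamma'|)) = W \cup \rec_{X'}^{-1}(\nu_v)$, and any point of $\rec_{X'}^{-1}(\nu_v) \setminus \{\nu_v\}$ lies in $\spe_{X'}^{-1}(q)$ for some regular point $q$ of the component of $E_{X'}$ corresponding to $v$; these sets are open, disjoint from $W$, and hence excluded from $\overline W$, while $\nu_v$ itself is reached as a limit along any edge of $\Gamma'$ adjacent to $v$. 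For the tree structure: $i_{X'}(|\Gamma'|)$ is a tree via $i_{X'}$, and at every vertex $v_i \neq v$ of $\Gamma'$ (as well as at rational divisorial interior edge points) one grafts the fiber $\rec_{X'}^{-1}(\nu_{v_i})$, which is a tree by Proposition~\ref{fibraretrac}; the resulting partial order rooted at $\nu_v$ defines a tree whose weak tree topology coincides with the subspace topology, so $W$ certifies $\nu \notin \C{\NL{(X,x)}}$.

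For the reverse inclusion $i_{X'}(|\C{\Gamma_{X'}}|) \subseteq \C{\NL{(X,x)}}$ I argue by contradiction: let $\nu = i_{X'}(p)$ with $p \in |\C{\Gamma_{X'}}|$ and suppose $W \subsetneq \NL{(X,x)}$ is an open neighborhood of $\nu$ with $\overline W$ a tree and $\partial W = \{\mu\}$. First, $\mu \in \esq$: if not, $W \cap \esq$ is a nonempty clopen subset of the connected space $\esq$, hence equals $\esq$, so $|\Gamma_{X'}| \cong \esq \subseteq \overline W$; but $|\C{\Gamma_{X'}}| \neq \emptyset$ implies $|\Gamma_{X'}|$ contains a topological circle, and no circle embeds in the tree $\overline W$ by uniqueness of arcs (Lemma~\ref{uniqarc}). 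Writing $\mu = i_{X'}(\hat\mu)$, let $V_p$ be the connected component of $p$ in $|\Gamma_{X'}| \setminus \{\hat\mu\}$. The set $i_{X'}(V_p)$ is connected, contains $\nu \in W$, and misses $\mu$, so $i_{X'}(V_p) \cap W = i_{X'}(V_p) \cap \overline W$ is clopen in $i_{X'}(V_p)$, hence equals $i_{X'}(V_p)$; taking closures in $\NL{(X,x)}$ yields $i_{X'}(\overline{V_p}) \subseteq \overline W$. It therefore suffices to establish the graph-theoretic claim that $\overline{V_p}$ contains a topological circle of $|\Gamma_{X'}|$, which would embed a circle in the tree $\overline W$, a contradiction.

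The principal obstacle is this final graph-theoretic claim. The easy case is when some cycle of $|\C{\Gamma_{X'}}|$ through $p$ avoids $\hat\mu$; the subtle cases arise when $p$ lies on a bridge of $|\C{\Gamma_{X'}}|$, or when every cycle meeting $V_p$ must pass through $\hat\mu$. I plan to dispatch these by a case analysis on whether $|\Gamma_{X'}| \setminus \{\hat\mu\}$ is connected (then $\overline{V_p} = |\Gamma_{X'}|$ trivially contains every cycle) or not; in the disconnected case I will use the connectedness of $|\C{\Gamma_{X'}}|$ and the fact that removing one point cannot kill all of $H_1$ to locate a cycle of $|\Gamma_{X'}|$ either already inside $V_p$ or passing through $\hat\mu$ and closing back into $\overline{V_p}$. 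A secondary technical point, in the first inclusion, is verifying that the weak tree topology on the grafted space $\overline W$ really agrees with the subspace topology from $\NL{(X,x)}$ at the grafting points.
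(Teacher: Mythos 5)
Your first inclusion follows the paper's route essentially verbatim: contrapositive via Lemma~\ref{coreingraph} and Lemma~\ref{coreversiontop}, pulling back the tree neighborhood of $i_{X'}^{-1}(\nu)$ under $\rec_{X'}$ and grafting the fibers supplied by Proposition~\ref{fibraretrac} (the paper likewise disposes of the closure computation and the compatibility of the weak tree topology by ``imitating the proof of Corollary~\ref{caractree}''). For the reverse inclusion you genuinely diverge. The paper argues by contraposition: given a tree neighborhood $U$ of $\nu$ with boundary $\set{\nu'}$, it shows that the trace $Z=\overline{U}\cap\esq$ is again a tree neighborhood with boundary $\set{\nu'}$ in $\esq$ (arcwise connectedness of $Z$ checked by an explicit path construction in the graph, then Corollary~\ref{subtree}), and concludes by Lemma~\ref{coreversiontop}. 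You instead run a direct contradiction: the clopen argument pushes the entire component $V_p$ of $|\Gamma_{X'}|\setminus\set{\hat\mu}$ into $\overline{W}$, and a circle inside $i_{X'}(\overline{V_p})$ violates unique arcwise connectedness of the tree $\overline{W}$ (Lemma~\ref{uniqarc}). Both are valid; your connectedness argument replaces the paper's case-by-case path construction and is arguably tidier, at the price of the final graph-theoretic claim.

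That final claim is true, but the mechanism you sketch for it has a hole: knowing that deleting one point cannot kill $H_1(|\Gamma_{X'}|)$ only guarantees that \emph{some} component closure $\overline{V_i}$ contains a circle, not that $\overline{V_p}$ does. Since $\hat\mu$ is a cut point in the disconnected case, no embedded circle can meet two different components of $|\Gamma_{X'}|\setminus\set{\hat\mu}$, so a cycle ``passing through $\hat\mu$'' lives entirely in a single $\overline{V_i}$, and nothing in the $H_1$ count forces $i=p$. The fix is already in your toolbox and makes the case analysis unnecessary: if $\hat\mu$ is not a vertex, subdivide the edge containing it at $\hat\mu$ (this changes neither $|\Gamma_{X'}|$ nor $|\C{\Gamma_{X'}}|$). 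Then $V_p$ is an open neighborhood of $p$ whose closure $V_p\cup\set{\hat\mu}$ is the realization of a connected subgraph with boundary the single vertex $\hat\mu$; since $p\in|\C{\Gamma_{X'}}|$, Lemma~\ref{coreversiontop} forbids this closure from being a tree, and a finite connected graph that is not a tree contains an embedded circle. With that substitution your argument closes.
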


\begin{proof}
If $\NL{(X,x)}$ is a tree, then $\C{\NL{(X,x)}}=|\C{\Gamma_{X'}}|=\emptyset$ by 
Corollary~\ref{caractree} and the result follows. Assume now that $\NL{(X,x)}$ is not a tree. 
Observe that under this assumption $|\Gamma_{X'}|$ is not a tree (see Corollary~\ref{caractree}).

 Take $\nu\in\NL{(X,x)}$ and suppose first that 
$\nu\in\C{\NL{(X,x)}}$. We know by Lemma~\ref{coreingraph} that $\nu\in\esq$. 
We proceed by contradiction. Suppose $\nu\notin i_{X'}(|\C{\Gamma_{X'}}|)$. According to 
Lemma~\ref{coreversiontop} there exists an open neighborhood $W\subsetneq|\Gamma_{X'}|$ of 
$i_{X'}^{-1}(\nu)$ such that its closure in $|\Gamma_{X'}|$ is a tree and its boundary is 
reduced to a vertex $v'$ of $\Gamma_{X'}$. Since the retraction $\rec_{X'}$ is 
continuous, $U=\rec_{X'}^{-1}(i_{X'}(W))\subsetneq\NL{(X,x)}$ is an open neighborhood of 
$\nu$. The closure $\overline U$ of $U$ in $\NL{(X,x)}$ equals $\set{\nu'}\bigsqcup U$, 
where $\nu'\in\esq$ corresponds to the vertex $v'$. 
Imitating the proof of Corollary~\ref{caractree}, we see that $\overline U$ in $\NL{(X,x)}$ 
inherits a natural tree structure from that of the closure of $W$ and $\rec_{X'}^{-1}(\mu)$ 
for any $\mu\in W$. Hence $\nu$ does not belong to the core of $\NL{(X,x)}$ and we get a 
contradiction. This proves that $\C{\NL{(X,x)}}\subseteq i_{X'}(|\C{\Gamma_{X'}}|)$.

 In order to finish the proof it suffices to check that $\nu\notin i_{X'}(|\C{\Gamma_{X'}}|)$ 
when $\nu\in\esq$ and $\nu\notin\C{\NL{(X,x)}}$. Suppose that $\nu$ satisfies these two conditions. 
Take a proper open subset $U$ of $\NL{(X,x)}$ which contains $\nu$ and such that its closure 
$\overline{U}$ in $\NL{(X,x)}$ is a tree and its boundary is reduced to a semivaluation $\nu'$. 
Since $\nu\in\esq$, $W=U\cap\esq$ is a non-empty open subset of $\esq$. If $W=\esq$ then 
$\esq\subseteq\overline{U}$ and by Corollary~\ref{subtree}, $\Gamma_{X'}$ would be a tree. 
Thus $W\subsetneq\esq$. Let us denote $Z$ the closure of $W$ in $\esq$. The connectedness of 
$\esq$ implies that $W$ is not closed.  We have $W\subsetneq Z\subseteq\overline{U}\cap\esq=
(U\cap\esq)\bigsqcup(\set{\nu'}\cap\esq)$. We deduce from this that $\nu'$ 
must belong to $\esq$ and $Z=\overline{U}\cap\esq=W\bigsqcup\set{\nu'}$. 

 By enlarging $U$ slightly if necessary, we may choose $\nu'$ such that $v'=i_{X'}^{-1}(\nu')$ 
is a vertex of $\Gamma_{X'}$. If $Z$ is a tree (as subspace of $\esq$) then from 
Lemma~\ref{coreversiontop} it would follow that $\nu\notin i_{X'}(|\C{\Gamma_{X'}}|)$ and this would 
end the proof. Let us prove that $Z$ is a tree.

 The subspace topology that $Z$ inherits from $\esq$ is the same as the one it inherits from 
$\overline{U}$. Since $\overline{U}$ is a tree, if $Z$ is arcwise connected then Corollary~\ref{subtree}
holds and $Z$ is also a tree. Therefore it suffices to take $p\in i_{X'}^{-1}(W)$ 
arbitrary and show that there exists a path $\gamma$ in $i_{X'}^{-1}(Z)$ from $p$ to $v'$. 
Suppose first that $p$ belongs to the interior of an edge $|e|$ of $|\Gamma_{X'}|$. The only 
boundary point of $Z$ is $\nu'$ so that $|e|$ is entirely contained in $i_{X'}^{-1}(Z)$. If the 
edge $e$ is incident to $v'$ then it is easy to define such a path $\gamma$. Otherwise it 
suffices to join either $\iota(e)$ or $\iota(\bar{e})$ to $v'$ by a path in $i_{X'}^{-1}(Z)$. 
Hence we can concentrate on the case when $p$ is a vertex of $\Gamma_{X'}$. Suppose that 
$p\in V(\Gamma_{X'})$. Since the boundary of $Z$ is reduced to $\nu'$, the set 
$\bigcup_{\iota(e)=p}|e|$ must be contained in $i_{X'}^{-1}(Z)$. Note that in particular any 
vertex of $\Gamma_{X'}$ adjacent to $p$ is also in $i_{X'}^{-1}(Z)$. If $p$ is adjacent to 
$v'$ then the edge of $\Gamma_{X'}$ joining $p$ to $v'$ induces the desired path. Otherwise 
the problem is reduced to finding a path in $i_{X'}^{-1}(Z)$ from a vertex adjacent to $p$ 
to $v'$. The graph $\Gamma_{X'}$ is finite and connected, so the existence of such a path 
$\gamma$ is guaranteed. This shows that $Z$ is tree and enables us to complete the proof.
\end{proof}


\subsection{Homeomorphism type of RZ\texorpdfstring{$\boldsymbol{(X,x)}$}{(X,x)} and 
NL\texorpdfstring{$\boldsymbol{(X,x)}$}{(X,x)}}

 The purpose of this subsection is to give the proof of Theorem~\ref{thmsuperficies} (see 
Section~\ref{intro}), which is the main result of this section. We start by presenting some 
lemmas needed for the proof. We end this subsection with an example and a remark.

  In what follows we shall assume that $X$ and $Y$ are algebraic surfaces 
defined over the same algebraically closed field $k$. Recall that, given two regular closed 
points $x\in X$ and $y\in Y$, the choice of an isomorphism between the henselizations of the 
local rings $\mathcal O_{X,x}$ and $\mathcal O_{Y,y}$ gives us a homeomorphism between 
$\RZ{(X,x)}$ and $\RZ{(Y,y)}$ (see Theorem~\ref{firstlemma}).

\begin{lemma}\label{centersarerelated}
Suppose that $X$ and $Y$ are non singular. Let $E,D$ be prime divisors in $X$ and $Y$ 
respectively and let $x\in E$ and $y\in D$ be two regular closed points. Let 
$\sigma:\mathcal{\widetilde{O}}_{Y,y}\rightarrow\mathcal{\widetilde{O}}_{X,x}$ 
be an isomorphism between the henselizations of the local rings which sends an 
equation of $D$ to an equation of $E$. For any valuation $\nu\in\RZ{(X,x)}$, 
$R_{\nu}\subseteq R_{\nu_E}$ if and only if $R_{\varphi(\nu)}\subseteq R_{\nu_D}$, 
where $\varphi:\RZ{(X,x)}\rightarrow\RZ{(Y,y)}$ denotes the homeomorphism induced 
by $\sigma$.
\end{lemma}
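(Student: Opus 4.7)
The strategy is to lift the question to the henselizations via Proposition~\ref{passageauhensel} and then transport the condition through $\sigma$. Let $f\in\mathcal O_{X,x}$ and $g\in\mathcal O_{Y,y}$ be local equations of $E$ and $D$ with $\sigma(g)=f$. The henselizations $\widetilde{\mathcal O}_{X,x}$ and $\widetilde{\mathcal O}_{Y,y}$ are two-dimensional regular local rings, hence UFDs, so one can factor $f=u\,f_1\cdots f_m$ and $g=v\,g_1\cdots g_n$ into distinct primes times units. Unique factorization and $\sigma(g)=f$ force $m=n$ and, after reindexing, $\sigma(g_j)=u_j f_j$ for suitable units $u_j$. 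Each $f_i$ (resp.\ $g_j$) determines a divisorial valuation $\tilde\nu_{E,i}$ on $\widetilde K$ (resp.\ $\tilde\nu_{D,j}$ on $\widetilde L$) as the valuation attached to the height-one prime it generates; the identity $\sigma(g_j)=u_j f_j$ translates into $\tilde\nu_{D,j}=\tilde\nu_{E,j}\circ\sigma$.

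The heart of the argument is the following claim: for any $\nu\in\RZ{(X,x)}$ with unique henselian extension $\tilde\nu\in\RZtilde(X,x)$ (Theorem~\ref{etaleext}), one has $R_\nu\subseteq R_{\nu_E}$ if and only if $R_{\tilde\nu}\subseteq R_{\tilde\nu_{E,i}}$ for some $i$. The $(\Leftarrow)$ direction follows by restricting to $K$, provided $\tilde\nu_{E,i}|_K=\nu_E$; this reduces to the identity $(f_i)\cap\mathcal O_{X,x}=(f)$, which holds because the intersection is a nonzero prime of $\mathcal O_{X,x}$ containing $(f)$ and strictly smaller than $\mathfrak m_{X,x}$ (else the height-one prime $(f_i)$ would contain the maximal ideal of the $2$-dimensional regular ring $\widetilde{\mathcal O}_{X,x}$). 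For the $(\Rightarrow)$ direction, $R_\nu\subseteq R_{\nu_E}$ says that $\nu_E$ is a coarsening of $\nu$; since $\widetilde K/K$ is algebraic, $R_{\tilde\nu}$ is integral over $R_\nu$, and going-up provides a rank-one coarsening $\tilde\nu'$ of $\tilde\nu$ restricting to $\nu_E$. The valuation ring $R_{\tilde\nu'}$ contains $\widetilde{\mathcal O}_{X,x}$, and its center there cannot be the maximal ideal (otherwise $\nu_E$ would be centered at $x$), so it is a height-one prime containing $f$; unique factorization forces it to be some $(f_i)$, whence $\tilde\nu'=\tilde\nu_{E,i}$.

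The lemma now follows formally: by the construction of $\varphi$ in Proposition~\ref{passageauhensel}, the henselian extension of $\varphi(\nu)$ is $\tilde\nu\circ\sigma$, so $R_{\widetilde{\varphi(\nu)}}=\sigma^{-1}(R_{\tilde\nu})$, and similarly $R_{\tilde\nu_{D,j}}=\sigma^{-1}(R_{\tilde\nu_{E,j}})$. Since $\sigma$ is an isomorphism, $R_{\tilde\nu}\subseteq R_{\tilde\nu_{E,i}}$ is equivalent to $R_{\widetilde{\varphi(\nu)}}\subseteq R_{\tilde\nu_{D,i}}$; quantifying over $i$ and applying the key claim on both sides yields the desired equivalence. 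The main obstacle is the forward direction of the claim: one must upgrade the abstract coarsening $\nu_E$ of $\nu$ to a genuinely divisorial coarsening of $\tilde\nu$ on $\widetilde K$, and this leans on both the algebraicity of $\widetilde K/K$ (for integrality and going-up) and the UFD factorization of $f$ in the henselization (to identify the resulting rank-one valuation with one of the $\tilde\nu_{E,i}$).
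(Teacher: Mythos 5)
Your proof is correct and follows essentially the same route as the paper's: both lift $\nu$ to its unique extension $\tilde\nu$ on the henselization, identify the rank-one part of $\tilde\nu$ with the divisorial valuation attached to a height-one (principal, by the UFD property of the two-dimensional regular henselian local ring) prime dividing the equation of $E$, and transport this data through $\sigma$. The only imprecision is the appeal to integrality and going-up to produce the rank-one coarsening of $\tilde\nu$ lying over $\nu_E$ (the ring $R_{\tilde\nu}$ need not be integral over $R_\nu$); the paper instead uses that $\tilde\nu$ and $\nu$ have the same value group (Theorem~\ref{etaleext}), so their convex subgroups, and hence their coarsenings, correspond bijectively, which gives the same conclusion cleanly.
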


\begin{proof}
Let us first consider an arbitrary algebraic variety $X$ defined over $k$ and $x\in X$ a 
regular closed point. Keeping the notations of the proof of Proposition~\ref{passageauhensel}, 
given $\nu$ in $\RZ{(X,x)}$, the valuation $\nu$ and its extension $\widetilde{\nu}\in\RZtilde(X,x)$ 
have the same 
value group $\Phi$. Assume that $\Phi$ has rank greater than one. Then the center $\frak{q}$ 
in $\mathcal{O}_{X,x}$ of the rank one valuation with which $\nu$ is composite coincides with 
$\widetilde{\frak{q}}\cap\mathcal{O}_{X,x}$, where $\widetilde{\frak{q}}$ is the center in 
$\mathcal{\widetilde{O}}_{X,x}$ of the rank one valuation with which $\widetilde{\nu}$ is 
composite. Moreover, $\frak{q}={\frak m}_{X,x}$ if and only if $\widetilde{\frak{q}}$ is the 
maximal ideal of $\mathcal{\widetilde{O}}_{X,x}$. Recall that any prime ideal of height one 
of a UFD is principal. In particular when $\dim\mathcal{O}_{X,x}=2$, we deduce that if $\frak{q}$ 
is generated by an element $f\in\mathcal{O}_{X,x}$ then $\widetilde{\frak{q}}$ is generated by 
an element $\tilde{f}\in\mathcal{\widetilde{O}}_{X,x}$ dividing $f$ in 
$\mathcal{\widetilde{O}}_{X,x}$.

 By hypothesis, we can pick local coordinates $(u,v)$ at $x$ 
and $(u',v')$ at $y$ such that $E=\set{u=0}$, $D=\set{u'=0}$ and $\sigma(u')=u$. 
Let us take a valuation $\nu\in\RZ{(X,x)}$ and suppose that $R_\nu\subseteq R_{\nu_E}$ 
holds. Then $\rank{\nu}=2$ (note that $\nu\neq\nu_E$) and hence $\rank{\varphi(\nu)}=2$. 
We consider $\mu\in\text{RZ}(Y)$ such that $R_{\varphi(\nu)}\subsetneq R_\mu$. Let us 
show that $\mu=\nu_D$. The converse is proved in an analogous way.

 Denote by $\widetilde{\varphi(\nu)}$ the extension of $\varphi(\nu)$ to $\RZtilde(Y,y)$. 
Applying the remarks made at the beginning of the proof to $\nu$, we can write the center 
in $\mathcal{\widetilde{O}}_{Y,y}$ of the rank one valuation with which $\widetilde{\varphi(\nu)}$ 
is composite as $\widetilde{\frak a}=(\sigma^{-1}(\tilde u))\mathcal{\widetilde{O}}_{Y,y}$, 
for some $\tilde u\in\mathcal{\widetilde{O}}_{X,x}$ dividing $u$ in $\mathcal{\widetilde{O}}_{X,x}$. 
In addition, $\widetilde{\varphi(\nu)}$ is not centered in the maximal ideal 
$\widetilde{\frak m}_{Y,y}$. Taking into account again the remarks made at the beginning we see 
that the center of $\mu$ in $\mathcal{O}_{Y,y}$ is $\frak a=\widetilde{\frak a}\cap\mathcal{O}_{Y,y}
\subsetneq\frak m_{Y,y}$. Since $\sigma^{-1}(\tilde u)$ divides $\sigma^{-1}(u)$ in 
$\mathcal{\widetilde{O}}_{Y,y}$ and $u'=\sigma^{-1}(u)$, we deduce that $u'$ belongs 
to $\frak a$. It suffices now to observe that $\frak a$ is a principal ideal and $u'$ is 
irreducible in order to conclude that $\mu=\nu_D$.
\end{proof}

 We might state the following lemma in terms of nets in $\textnormal{RZ}(X)$, but for our 
purposes it suffices to deal with sequences. Note that as a direct consequence of the definition 
of the topology of the Riemann--Zariski space, a sequence of valuations $(\nu_n)_{n=1}^\infty$ 
converges to a valuation $\nu$ (we write $\nu_n\rightarrow\nu$) if and only if for any 
$f\in R_\nu$ there exists $n_0\geq 1$ such that $f\in\bigcap\limits_{n\geq n_0}R_{\nu_n}$.

\begin{lemma}\label{centresdiff}
Suppose that the surface $X$ is non singular. Let $E$ be a prime divisor in $X$ and let 
$(\nu_n)_{n=1}^\infty$ be a sequence of valuations in $\textnormal{RZ}(X)$. If the center 
$x_n$ of $\nu_n$ in $X$ belongs to $E$ for every $n$ and $x_i\neq x_j$ for $i\neq j$, 
then $(\nu_n)_{n=1}^\infty$ is convergent. In addition, the set of all limits of the 
sequence $(\nu_n)_{n=1}^\infty$ is the closure of the divisorial valuation $\nu_E$ 
associated to $E$.
\end{lemma}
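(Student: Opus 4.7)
The plan is to show two things: first, that $(\nu_n)_n$ converges to every valuation $\nu$ in the closure of $\{\nu_E\}$ in $\textnormal{RZ}(X)$, which in particular forces convergence of the sequence; second, that conversely every limit of $(\nu_n)_n$ lies in that closure. Recall that the closure of $\{\nu_E\}$ in $\textnormal{RZ}(X)$ is the set of $\nu$ with $R_\nu\subseteq R_{\nu_E}$. Since $X$ is non-singular and $E$ is a prime divisor, $R_{\nu_E}$ coincides with the discrete valuation ring $\mathcal{O}_{X,\eta_E}$ at the generic point $\eta_E$ of $E$; in particular every element of $R_{\nu_E}$ is regular on some Zariski open neighborhood of $\eta_E$ in $X$.

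For the first direction, fix $\nu$ with $R_\nu\subseteq R_{\nu_E}$ and let $f\in R_\nu$. Then $f\in\mathcal{O}_{X,\eta_E}$, so there exists an open subset $U\subseteq X$ containing $\eta_E$ with $f\in\mathcal{O}_X(U)$. The set $E\setminus U$ is a proper Zariski closed subset of the irreducible curve $E$, hence finite. Since the points $x_n$ are pairwise distinct, $x_n\in U$ for all but finitely many $n$; for such $n$, $f\in\mathcal{O}_{X,x_n}\subseteq R_{\nu_n}$ because $\nu_n$ dominates $\mathcal{O}_{X,x_n}$. By the criterion for convergence in $\textnormal{RZ}(X)$ recalled just before the lemma, this yields $\nu_n\to\nu$.

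For the reverse direction, suppose $\nu_n\to\nu$ but $R_\nu\not\subseteq R_{\nu_E}$. Pick $f\in R_\nu$ with $\nu_E(f)<0$ and set $g=1/f$, so $\nu_E(g)>0$. By the same argument as above, $g$ is regular on some open neighborhood $U$ of $\eta_E$ and vanishes on the dense open subset $U\cap E$ of $E$; hence $g\in\mathfrak{m}_{X,x_n}$ for all but finitely many $n$. Domination of $\mathcal{O}_{X,x_n}$ by $\nu_n$ then forces $\nu_n(g)>0$, equivalently $f\notin R_{\nu_n}$, contradicting $\nu_n\to\nu$.

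The only delicate point is this last step: from $f\in R_{\nu_n}$ alone one cannot conclude that $f$ is regular at $x_n$ (valuations of rank two can send to $R_{\nu_n}$ functions with indeterminacy at $x_n$), which is why I pass to $g=1/f$, whose positive $\nu_E$-value uniformly guarantees both regularity and vanishing at almost every point of $E$. Smoothness of $X$ enters the argument only through the identification $R_{\nu_E}=\mathcal{O}_{X,\eta_E}$.
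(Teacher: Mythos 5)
Your proof is correct. The forward direction (every valuation in the closure of $\nu_E$ is a limit) is essentially the paper's argument, just phrased by verifying the convergence criterion directly for each $\nu$ with $R_\nu\subseteq R_{\nu_E}$ instead of first proving $\nu_n\to\nu_E$ and then invoking the general fact that a sequence converging to a point converges to everything in its closure. The converse is where you genuinely diverge: the paper argues by contradiction on the structure of $\nu$, first showing its center must be a closed point of $E$, then splitting into the rank-one case (handled with local coordinates and the archimedean property of the value group, producing a function $f^m/u$ in $R_\nu$ but not in $R_{\nu_n}$) and the rank-two case (handled by a blow-up separating the center of the rank-one part of $\nu$ from the strict transform of $E$, then using continuity of the center map). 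Your single observation --- that any $f\in R_\nu\setminus R_{\nu_E}$ has inverse $g=1/f$ lying in $\mathfrak m_{X,\eta_E}$, hence vanishing at all but finitely many closed points of $E$, hence in $\mathfrak m_{X,x_n}$ for almost all $n$, forcing $f\notin R_{\nu_n}$ --- replaces the entire case analysis with a uniform two-line argument. It is more elementary (no blow-ups, no rank discussion) and uses smoothness of $X$ only through the identification $R_{\nu_E}=\mathcal O_{X,\eta_E}$, which holds already for normal $X$; the price is that one loses the explicit picture of how the centers move under blow-up, which the paper's longer argument makes visible and which is the mechanism exploited elsewhere in the section. Your closing remark correctly identifies the trap (one cannot infer regularity of $f$ at $x_n$ from $f\in R_{\nu_n}$), and passing to $1/f$ is exactly the right way around it.
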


\begin{proof}
A sequence $(\nu_n)_{n=1}^\infty$ satisfying the hypothesis of the lemma converges to the divisorial 
valuation $\nu_E$. Indeed, given any $f$ in the function field of $X$ with $\nu_E(f)\geq 0$, for $n$ 
large enough, $x_n$ is not a pole of $f$ and thus $\nu_n(f)\geq0$. This means that $\nu_n\to\nu_E$. 
It follows from simple topological considerations that if $\nu$ is a valuation of $\textnormal{RZ}(X)$ 
in the closure of $\nu_E$, $\nu\neq\nu_E$, then $\nu_n\to\nu$. We will proceed by contradiction to 
prove the converse.

  Take $(\nu_n)_{n=1}^\infty$ satisfying the assumptions about the sequence of centers 
and suppose that $\nu_n\to\nu$ where $\nu\in\textnormal{RZ}(X)$ is not in the closure of $\nu_E$. 
We denote by $x$ the center of $\nu$ in $X$. Note that the continuity of the map which sends a 
valuation of $\text{RZ}(X)$ to its center in $X$ implies that $x_n\rightarrow x$ in $X$. Moreover, 
$x$ must be a closed point of $E$. To see this, observe that either $x$ is a closed point of $X$ 
or the generic point $\xi$ of a prime divisor $D$, $D\neq E$, of $X$. Consider the open subset 
$U=X\setminus E$ of $X$. By hypothesis $x_n\notin U$ for all $n\geq1$. If $x$ is a closed point 
of $X$ and $x\notin E$ then $U$ is an open neighborhood of $x$ in $X$ and this contradicts 
$x_n\to x$. If $x=\xi$ then $x_n\to y$ for all $y\in\overline{\set{\xi}}=D$. Take a closed point 
$y\in D\setminus E$, then $U$ is an open neighborhood of $y$ in $X$ and this contradicts $x_n\to y$.

 Since $\nu$ does not belong to the closure of $\nu_E$ in $\text{RZ}(X)$, it satisfies 
either $\rank{\nu}=1$ or $R_\nu\subsetneq R_{\nu_1}$ for some rank one valuation $\nu_1\in\text{RZ}(X)$, 
$\nu_1\neq\nu_E$. Let us now study both possibilities.

 Suppose that $\nu$ is a rank one valuation. Pick local coordinates $(u,v)$ at $x$ such that 
$E=\set{u=0}$ and a rational function $f$ on $X$ regular at $x$. Since the value group of $\nu$ 
is archimedean, we can find a positive integer $m$ such that $\nu(f^m/u)\geq0$. On the other hand, the 
hypothesis made on the sequence of centers implies that, for $n$ large enough, $f$ is a unit of 
$\mathcal{O}_{X,x_n}$ and therefore $\nu_n(f^m/u)=-\nu_n(u)<0$. We see that $(\nu_n)_{n=1}^\infty$ does 
not converge to $\nu$.

 Now suppose that $\nu$ is a rank two valuation composite with a rank one valuation 
$\nu_1$ different from $\nu_E$. Consider a finite composition $\pi:X'\rightarrow X$ of point blow ups 
above $x$ such that the center $C$ of $\nu_1$ in $X'$ and the strict transform of $E$ are disjoint. 
The sequence $(\pi^{-1}(x_n))_{n=1}^\infty$ of centers in $X'$ does not converge to the center of 
$\nu$ in $X'$, because this center is a closed point of $C$. Hence we conclude that 
$(\nu_n)_{n=1}^\infty$ does not converge to $\nu$ and this ends the proof.
\end{proof}

We are now in position to prove Theorem~\ref{thmsuperficies}.

\begin{proof}[Proof of $\textup{(1)}\Rightarrow\textup{(2)}$]
Assume that $\RZ{(X,x)}$ and $\RZ{(Y,y)}$ are homeomorphic. By Proposition~\ref{quotientHausdorff}, 
$\NL{(X,x)}$ and $\NL{(Y,y)}$ are also homeomorphic.
\end{proof}

\begin{proof}[Proof of $\textup{(2)}\Rightarrow\textup{(3)}$]
Suppose that $\NL{(X,x)}$ and $\NL{(Y,y)}$ are homeomorphic. If $\NL{(X,x)}$ is a tree 
then $\NL{(Y,y)}$ must also be a tree. According to Corollary~\ref{caractree}, $\Gamma_{X'}$ 
and $\Gamma_{Y'}$ are both trees and thus they are equivalent graphs. Suppose that both 
normalized non-Archimedean links are not trees. The definition of the core is purely 
topological, so that we have a natural homeomorphism between the cores of $\NL{(X,x)}$ 
and $\NL{(Y,y)}$ when equipped with their respective induced topologies. Since neither 
$\Gamma_{X'}$ nor $\Gamma_{Y'}$ are trees (again by Corollary~\ref{caractree}) we can 
consider their cores. By Proposition~\ref{coresareequal}, we conclude that 
$|\C{\Gamma_{X'}}|$ and $|\C{\Gamma_{Y'}}|$ are homeomorphic. Therefore $\Gamma_{X'}$ 
and $\Gamma_{Y'}$ are equivalent graphs.
\end{proof}

\begin{proof}[Proof of $\textup{(3)}\Rightarrow\textup{(1)}$]
Suppose that $\Gamma_{X'}$ and $\Gamma_{Y'}$ are equivalent graphs. Our goal is to construct 
a homeomorphism $\varphi$ from $\RZ{(X,x)}$ to $\RZ{(Y,y)}$. We begin by the case where there 
exists an isomorphism of graphs $\tau:\Gamma_{X'}\rightarrow\Gamma_{Y'}$. In Step 1 we assume 
that both exceptional loci, $E:=E_{X'}$ and $D:=E_{Y'}$, are irreducible; while in Step 2 we 
treat the case of any two isomorphic graphs. Next we address the general case.

\textit{Step 1.} 
Let us assume first that $E$ and $D$ are both irreducible. 
Note that the sets underlying $E$ and $D$ have both the same cardinality as the field $k$. Indeed, 
since $E$ is a proper normal curve over $k$, we have a finite flat surjective morphism from $E$ 
to $\mathbf{P}^1_k$ of degree $n=[L:k(t)]$ where $L$ denotes the function field of $E$, and thus 
an injection $E\hookrightarrow\mathbf{P}^1_k\times\set{1,\ldots,n}$. The cardinality of $E$ is 
bounded by the cardinalities of $\mathbf{P}^1_k$ and $\mathbf{P}^1_k\times\set{1,\ldots,n}$, which 
both equal the cardinality of the field $k$.

 We define a bijective map $\varphi:\RZ{(X,x)}\to\RZ{(Y,y)}$ as follows. 
The divisorial valuation associated to $E$ is sent to the divisorial valuation associated 
to $D$, that is, $\varphi(\nu_E)=\nu_D$. We choose a bijection $\sigma$ between the closed 
points of $E$ and those of $D$ and, for every closed point $z\in E$ an isomorphism 
between the henselizations of the local rings 
$\sigma_z:\mathcal{\widetilde{O}}_{Y',\sigma(z)}\rightarrow\mathcal{\widetilde{O}}_{X',z}$ 
which maps the local equation of $D$ to that of $E$. A valuation $\nu\in\RZ{(X',z)}$ is 
sent by $\varphi$ to its image in $\RZ{(Y',\sigma(z))}$ by the homeomorphism induced by 
$\sigma_z$ (see Theorem~\ref{firstlemma}). Let us prove that $\varphi$ is continuous. 
Observe that by construction, $\varphi^{-1}$ will be also continuous.

 According to \cite[Theorem~3.1]{CF}, $\RZ{(X,x)}$ is a Fr\'echet--Urysohn space. 
Thus the continuity of $\varphi$ will follow if, for every sequence of valuations 
$(\nu_n)_{n=1}^\infty$ in $\RZ{(X,x)}$ converging to a valuation $\nu\in\RZ{(X,x)}$, we 
can extract a subsequence such that $(\varphi(\nu_{\gamma(n)}))_{n=1}^\infty$ converges 
to $\varphi(\nu)$. For any positive integer $n$, we denote by $x_n$ the center of $\nu_n$ 
in $X'$. Note that the sequence $(x_n)_{n=1}^\infty$ converges to the center $x'$ of $\nu$ 
in $X'$.

 First suppose that there exist $z\in E$ and $n_0\geq1$ such that $x_n=z$ 
for $n\geq n_0$. If $z$ is a closed point of $E$, then the sequence $(x_n)_{n=1}^\infty$ 
has $z$ as unique limit and therefore $x'=z$. We have that 
$(\nu_n)_{n=n_0}^\infty\subseteq\RZ{(X',z)}$ and $\nu\in\RZ{(X',z)}$. This yields 
$\varphi(\nu_n)\rightarrow\varphi(\nu)$ because $\varphi$ restricted to $\RZ{(X',z)}$ is 
continuous. Suppose now that $z$ is the generic point of $E$, that is, $\nu_n=\nu_E$ for 
all $n\geq n_0$. If moreover $x'$ is the generic point of $E$ then $\nu=\nu_E$ and it is 
clear that $\varphi(\nu_n)\rightarrow\varphi(\nu)$. Otherwise $x'$ is a closed point of $E$ 
and we are then in the situation $R_\nu\subsetneq R_{\nu_E}$. If this is the case, then 
Lemma~\ref{centersarerelated} implies that $R_{\varphi(\nu)}\subsetneq R_{\nu_D}$. Since 
$(\varphi(\nu_n))_{n=1}^\infty$ converges to $\nu_D$, it also converges to any valuation 
in the closure of $\nu_D$, so $\varphi(\nu_n)\rightarrow\varphi(\nu)$. This ends the proof 
in the case where the sequence of centers $(x_n)_{n=1}^\infty$ is stationary.

 Suppose now that sequence of centers does not stabilize and we can extract a 
subsequence $(\nu_{\gamma(n)})_{n=1}^\infty$ of valuations where all the centers are different. 
Then $(\nu_{\gamma(n)})_{n=1}^\infty$ satisfies the assumptions of Lemma~\ref{centresdiff}. Since 
this sequence also converges to $\nu$, the valuation $\nu$ is in the closure of $\nu_E$, and by 
Lemma~\ref{centersarerelated}, $\varphi(\nu)$ is in the closure of $\nu_D$. Observe that by 
construction the centers of $(\varphi(\nu)_{\gamma(n)})_{n=1}^\infty$ are also pairwise distinct. 
Applying again Lemma~\ref{centresdiff} to the sequence $(\varphi(\nu_{\gamma(n)}))_{n=1}^\infty$ 
we conclude that $\varphi(\nu_{\gamma(n)})\rightarrow\varphi(\nu)$. 

 If the sequence of centers does not stabilize but we are not in the previous 
situation, then there exists a finite number of different points $z_1,\ldots,z_l$ of $E$ 
($l\geq 2$) such that $x_n\in\set{z_1,\ldots,z_l}$ for all $n$ large enough and each $z_i$ 
is visited by the sequence infinitely many times. Since $x_n\rightarrow x'$, we deduce that 
$l=2$, one element of $\set{z_1,z_2}$ is the generic point of $E$ and the other one is $x'$ 
(which must be a closed point of $E$). Hence we can extract a subsequence 
$(\nu_{\gamma(n)})_{n=1}^\infty$ of valuations in $\RZ{(X',x')}$ which converges to 
$\nu\in\RZ{(X',x')}$. The continuity of $\varphi$ restricted to $\RZ{(X',x')}$ implies that 
$\varphi(\nu_{\gamma(n)})\rightarrow\varphi(\nu)$. This ends the proof of the continuity of 
$\varphi$ and the proof of Step 1.

\textit{Step 2.} 
Suppose now that $E$ and $D$ both have $m\geq2$ irreducible components and that there exists a 
graph isomorphism $\tau:\Gamma_{X'}\rightarrow\Gamma_{Y'}$. Let $E_1,\ldots,E_m$ be an enumeration 
of the irreducible components of $E$. Then the isomorphism $\tau$ determines an enumeration 
$D_1,\ldots,D_m$ of the irreducible components of $D$. We fix a bijection $\sigma_i$ between the 
closed points of $E_i$ and those of $D_i$, for $1\leq i\leq m$. We do this in such a way that 
$\sigma_i(E_i\cap E_j)=\sigma_j(E_i\cap E_j)=D_i\cap D_j$, for any $i\neq j$ such that 
$E_i\cap E_j\neq\emptyset$. We call $\sigma$ the bijection induced by $\sigma_1,\dots,\sigma_m$ 
between the closed points of $E$ and those of $D$. For any closed point $z$ of $E$, 
we choose an  isomorphism between the henselizations of the local rings 
$\sigma_z:\mathcal{\widetilde{O}}_{Y',\sigma(z)}\rightarrow\mathcal{\widetilde{O}}_{X',z}$ 
that sends the local equation of every $D_i$ passing through $\sigma(z)$ to the local equation 
of the corresponding component $E_i$ in $E$. We define a bijection $\varphi$ from $\RZ{(X,x)}$ 
to $\RZ{(Y,y)}$ exactly as we did before. That is, by means of the homeomorphism at the level 
of valuation spaces determined by each $\sigma_z$ and setting $\varphi(\nu_{E_i})=\nu_{D_i}$ 
for $1\leq i\leq m$.

 In order to check the continuity of $\varphi$ we follow the same idea as in Step 1. 
Let us take a sequence of valuations $(\nu_n)_{n=1}^\infty$ in $\RZ{(X,x)}$ converging to a 
valuation $\nu\in\RZ{(X,x)}$. We denote by $x'$ the center of $\nu$ in $X'$ and by $x_n$ the 
center of $\nu_n$ in $X'$ for any $n\geq 1$. We distinguish again three possibilities for 
the sequence $(x_n)_{n=1}^\infty$ of centers. In fact, we can find $i\in\set{1,\ldots,m}$ such 
that one of the following situations holds,

\begin{enumerate}[$\bullet$]
\item There exists $z\in E_i$ and $n_0\geq1$ such that $x_n=z$ for $n\geq n_0$.
\item We can extract a subsequence of valuations where all the centers in $X'$ are different 
and belong to $E_i$.
\item We can extract a subsequence of valuations where all the centers in $X'$ are equal to 
$x'$ which is in addition a closed point of $E_i$.
\end{enumerate}

 It suffices now to repeat the same arguments used in the proof of the case of one 
prime divisor in each good resolution to show that there exists a subsequence 
$(\nu_{\gamma(n)})_{n=1}^\infty$ such that $(\varphi(\nu_{\gamma(n)}))_{n=1}^\infty$ 
converges to $\varphi(\nu)$.

\textit{Step 3.} 
If $\Gamma_{X'}$ and $\Gamma_{Y'}$ are not isomorphic then by Proposition~\ref{eqgraphs} 
there exist two isomorphic graphs $\Gamma_n$ and $\Gamma'_m$ such that 
$\Gamma_{X'}=\Gamma_0\to\Gamma_1\to\ldots\to\Gamma_n$ and 
$\Gamma_{Y'}=\Gamma'_0\to\Gamma'_1\to\ldots\to\Gamma'_m$, where each arrow denotes an 
elementary modification. 
Let us suppose that one of these sequences is an isomorphism of graphs, for instance the first one. 
Then the second sequence transforms $\Gamma_{Y'}$ into a graph isomorphic to $\Gamma_{X'}$. Recall 
that an elementary modification encodes the blowing-up at a closed point of a simple normal crossings 
divisor on a non singular surface. Then, to $\Gamma'_0\to\Gamma'_1$ we can associate a blowing-up 
$\varphi_1:Y'_1\to Y'$ centered at a closed point of $D$; to $\Gamma'_1\to\Gamma'_2$, a blowing-up 
$\varphi_2:Y'_2\to Y'_1$ centered at a closed point of $\varphi_1^{-1}(D)$; and so on, in such 
a way that $\pi_{Y'}\circ\varphi_1\circ\ldots\circ\varphi_m$ is a good resolution of $(Y,y)$ with 
dual graph isomorphic to $\Gamma_{X'}$. We are now in the case treated above. If neither 
$\Gamma_{X'}\to\Gamma_n$ nor $\Gamma_{Y'}\to\Gamma_m'$ are isomorphisms of graphs, we just need 
to do the previous construction starting from both good resolutions.
\end{proof}

\begin{example}\label{explane} 
Let $(X,x)$ be a rational surface singularity and let $(Y,y)$ be a germ of a cone over an 
elliptic curve, where $X$ and $Y$ are defined over the same algebraically closed field $k$. Then the 
dual graphs associated to the minimal embedded resolutions of $(X,x)$ and $(Y,y)$ are both trees. In 
particular, Theorem~B implies that $\RZ{(X,x)}$, $\RZ{(Y,y)}$, and $\RZ{(\mathbf{A}_k^2,0)}$ are 
homeomorphic. Similarly, $\NL{(X,x)}$, $\NL{(Y,y)}$, and $\NL{(\mathbf{A}_k^2,0)}$ are homeomorphic.
\end{example}

Finally, we observe that the homotopy type of $\NL{(X,x)}$ does not determine its homeomorphism type.

\begin{remark}\label{NLnothomeo}
The homotopy type of $\NL{(X,x)}$ is known to be that of the dual complex associated 
to a log-resolution of the pair $(X,x)$ (see \cite{Fan,Thu}). The equivalence relation that we 
have defined in the set of graphs is stricter than the homotopy equivalence (see 
Example~\ref{stricterrelation}). We may consider two finite connected graphs $\Gamma$ and $\Gamma'$ 
with no vertices of degree one, such that the topological realizations of $\Gamma$ and $\Gamma'$ 
are homotopy equivalent but not homeomorphic. By \cite[Theorem~2]{Kol}, any finite simplicial 
complex of dimension one can be obtained as the dual graph associated to a good resolution of 
an isolated surface singularity. Since $\Gamma$ and $\Gamma'$ are not equivalent, Theorem~B 
also shows that there exist normal surface singularities such that their normalized non-Archimedean 
links are homotopy equivalent but not homeomorphic.
\end{remark}


\end{document}